\definecolor{forestgreen}{rgb}{0.13, 0.55, 0.13}
\newtheorem{theorem}{Theorem}[section]
\newtheorem{lemma}[theorem]{Lemma}
\theoremstyle{definition}
\newtheorem{definition}[theorem]{Definition}
\theoremstyle{remark}
\newtheorem*{remark}{Remark}
\newcommand{\minimize}{\mathop{\textrm{minimize}}}
\newcommand{\argmin}{\mathop{\textrm{arg\,min}}}
\newcommand{\R}{\mathbf{R}} 
\newcommand{\E}{\mathbf{E}}
\newcommand{\tu}{\tikz{\draw[black] (0,0) -- (1ex,0) -- (0.5ex,1ex) -- cycle;}}
\newcommand{\td}{\tikz{\draw[black] (0ex,1ex) -- (1ex,1ex) -- (0.5ex,0ex) -- cycle;}} 
\newcommand{\cc}{\tikz{\draw[black] (0.5ex,0.5ex) circle (0.5ex);}} 
\title{An Adaptive Stochastic Gradient Method \\ with Non-negative Gauss-Newton Stepsizes}
\author{Antonio Orvieto\thanks{
ELLIS Institute Tübingen, Max Planck Institute for Intelligent Systems, Tübingen AI Center, Tübingen, Germany. Work partially carried out at Meta in Seattle, USA.
Email: \texttt{antonio@tue.ellis.eu}}\and Lin Xiao\thanks{
Fundamental AI Research (FAIR) at Meta, Seattle, USA. Email: \texttt{linx@meta.com}}
}
\date{}
\begin{document}
\maketitle
\setlength{\marginparwidth}{2cm}

\begin{abstract}
We consider the problem of minimizing the average of a large number of smooth but possibly non-convex functions. In the context of most machine learning applications, each loss function is non-negative and thus can be expressed as the composition of a square and its real-valued square root. This reformulation allows us to apply the Gauss-Newton method, or the Levenberg-Marquardt method when adding a quadratic regularization. The resulting algorithm, while being computationally as efficient as the vanilla stochastic gradient method, is highly adaptive and can automatically warmup and decay the effective stepsize while tracking the non-negative loss landscape. We provide a tight convergence analysis, leveraging new techniques, in the stochastic convex and non-convex settings. In particular, in the convex case, the method does not require access to the gradient Lipshitz constant for convergence, and is guaranteed to never diverge. The convergence rates and empirical evaluations compare favorably to the classical (stochastic) gradient method as well as to several other adaptive methods. 
\end{abstract}

\tableofcontents


\section{Introduction}
We consider the problem of minimizing the average of a large number of loss functions:
\begin{equation}\label{eq:erm}
\minimize_{x\in\R^d} ~f(x):=\frac{1}{N}\sum_{i=1}^N f_i(x),
\end{equation}
where each $f_i$ is assumed to be lower bounded, differentiable and have Lipschitz-continuous gradients. Specifically, we assume that for each~$i$, there exist a constant $L_i$ such that 
\[
\|\nabla f_i(x)-\nabla f_i(y)\| \leq L_i \|x-y\|,  \qquad \forall\,x,y\in\R^d.
\]
Consequently, the average gradient $\nabla f$ has a Lipschitz constant $L\leq (1/N)\sum_{i=1}^N L_i$.

In machine learning applications, each~$f_i$ corresponds to the loss function associated with a data point or the average of a mini-batch \citep[e.g.,][]{bottou2018optimization}. For large~$N$, the cost of frequent averaging over all data points can be prohibitive, therefore the method of choice (in both convex and non-convex settings) is often some variant of Stochastic Gradient Descent~(SGD):
\begin{equation}\label{eq:sgd}
x^{k+1} = x^k - \gamma_k \nabla f_{i_k}(x^k),
\end{equation}
where $i_k\in\{1,2,\dots,N\} =:[N]$ is a randomly picked data point or the index of a mini-batch, and $\gamma_k>0$ is the stepsize~(also known as the \textit{learning rate}) selected at iteration $k$.

The simplest choice is a constant stepsize $\gamma_k=\gamma$ for all $k\geq 0$. If~$\gamma$ is sufficiently small, say $\gamma<2/L$ under the smoothness assumption, then convergence to a neighborhood of a stationary point can be established \citep[e.g.,][]{ghadimi2013stochastic}. However, the Lipschitz constant~$L$ is often unknown and hard to estimate.
Under a (deterministic) decreasing stepsize rule such as $\gamma_k =\gamma_0/\sqrt{k}$, convergence to a local minimizer is eventually recovered for any $\gamma_0>0$ since $\gamma_k<2/L$ after some iterations. Nevertheless, in practice, tuning $\gamma_0$ is still needed to avoid numerical overflows and instabilities, and to obtain optimal performance.
Indeed,~\cite{yang2024two} refined the classical result by~\cite{ghadimi2013stochastic} to show that averaged gradients of the SGD iterates converge to zero for any $\gamma_0>0$, but it takes exponential time to recover from a poorly tuned~$\gamma_0$. 

\subsection{Adaptive stepsizes}
\label{sec:adaptive}

Rather than relying on the knowledge of problem-dependent constants such as~$L$,
adaptive stepsizes exploit additional information from the online iterates $x^k$ and $\nabla f_{i_k}(x^k)$ to obtain faster convergence.
For example, \cite{Kesten58} proposed to adjust $\gamma_k$ based on the sign of the inner product of consecutive stochastic gradients, an idea further developed by \cite{MirzoakhmedovUryasev83} and \cite{DelyonJuditsky93}.
Similar techniques have also been investigated in the machine learning community \citep[e.g.,][]{Jacobs88DBD,Sutton92IDBD,Schraudolph1999, MahmoodSutton12TuningFree}.

\paragraph{AdaGrad and variants.}
Stemming from the seminal work of AdaGrad by \cite{duchi2011adaptive}, modern variants of adaptive stochastic gradient methods employ coordinate-wise stepsizes.
Specifically, instead of a single stepsize~$\gamma_k\in\R_+$, one maintains a vector $\Gamma^k\in\R_+^d$ and the SGD update takes the form 
\begin{equation}\label{eq:coord-sgd}
x^{k+1} = x^k - \Gamma^k\odot \nabla f_{i_k} (x^k),
\end{equation}
where $\odot$ denotes the element-wise product of two vectors. 
Since in this paper we focus on adapting the scalar stepsize~$\gamma_k$ in~\eqref{eq:sgd}, it's more convenient to consider a scalar variant of AdaGrad, called AdaGrad-norm \citep{ward2020adagrad}, which sets the stepsize in~\eqref{eq:sgd} as
\begin{equation}\label{eq:adagrad-norm}
\gamma_k = \frac{\eta}{\sqrt{\delta_0^2+\sum_{\tau=1}^k \|\nabla f_{i_\tau}(x^\tau)\|^2}},
\end{equation}
where $\eta$ and $\delta_0$ are hyper-parameters to be tuned.
(The original AdaGrad sets the coordinate-wise stepsizes $\Gamma^k_j$ in~\eqref{eq:coord-sgd} by replacing the gradient norm $\|\nabla f_{i_\tau}(x^\tau)\|$ in~\eqref{eq:adagrad-norm} with its $j$th element.)
The cumulative sum across iterations in the denominator of~\eqref{eq:adagrad-norm} implies that the stepsize decreases monotonically, a feature shared with simple stepsize rules such as $\gamma_0/\sqrt{k}$ and is key to obtaining the classical $O(1/\sqrt{k})$ convergence rate in the convex case \citep{duchi2011adaptive,reddi2019convergence}. 
However, monotonically decreasing stepsizes cannot efficiently navigate complex landscapes with varying local curvatures and they often lead to very slow convergence in practice.  

More sophisticated adaptive methods \citep[e.g.,][]{sutskever13momentum} replace $\nabla f_{i_k}(x^k)$ in~\eqref{eq:sgd} with the exponential moving average of stochastic gradients (often called momentum). In addition, Adam \citep{kingma2014adam} replaces the cumulative sum in the denominator of~\eqref{eq:adagrad-norm} with their exponential moving average (coordinate-wise). 
In particular, the latter change allows the stepsize to decrease or increase depending on the local features of the loss landscape, which contributes to its better performance for training deep-learning models \citep[e.g.,][]{schmidt2021descending}.

\paragraph{Stochastic Polyak stepsizes.}
A classical non-monotone adaptive stepsize rule is the Polyak stepsize \citep[\S5.3]{polyak87introduction}. For the full gradient method $x^{k+1}=x^k-\gamma_k\nabla f(x^k)$, it sets 
\begin{equation}\label{eq:polyak-stepsize}
\gamma_k = \frac{f(x^k)-f^*}{\|\nabla f(x^k)\|^2},
\end{equation}
where $f^*=\inf_{x\in\R^d} f(x)$. Here, $\nabla f(x^k)$ can be a subgradient if~$f$ is nondifferentiable. Although \citet{polyak87introduction} derived it in the context of convex, nonsmooth optimization, \citet{hazan2019revisiting} showed that it achieves the optimal convergence rates of gradient descent for minimizing smooth and/or strongly convex functions as well, without a priori knowledge of the smoothness and strong convexity parameters. However, a crucial issue of the Polyak stepsize is that the optimal value~$f^*$ needs to be known beforehand, which severely limits its application in practical settings.

Nevertheless, there has been, in the last few years, a rising interest in this method in deep learning, where often $f^*$ is zero---linked to over-parametrization~\citep{zhang2021understanding}. There are several adaptations to the Polyak stepsize to stochastic gradient methods~\citep{rolinek2018l4, berrada2020training, prazeres2021stochastic}, with the most theoretically sound being the variant of~\citet{loizou2021stochastic}, proposing the stochastic Polyak stepsize (SPS) 
\begin{equation}\label{eq:stoch-polyak-stepsize}
\gamma_k = \min\left\{\frac{f_{i_k}(x^k)-f_{i_k}^*}{c\|\nabla f_{i_k}(x^k)\|^2},~\gamma_b\right\},
\end{equation}
where $c,\gamma_b\in\R_{\ge0}$ are hyperparameters and $f_{i_k}^*=\inf_{x\in\R^d} f_{i_k}(x)$ is the minimum valued of $f_{i_k}$. 
While $f_{i_k}$ is in principle unknown, in the setting with small batch sizes and no regularization, it is clear that one can set $f_{i_k}^*=0$. 
\citet{loizou2021stochastic} provided a thorough analysis of this method, showing that convergence to a neighborhood of the solution can be achieved without knowledge of the gradient Lipschitz constant~$L$.

Even in the over-parametrized setting, as soon as regularization is added, we always have $f^*_{i_k}>0$ and its exact value is hard to estimate\footnote{If the problem is overparametrized~($d\gg N$), often $f(x)=0$ can be achieved by some $x^*\in\R^d$. However there exist no $x$ such that, e.g. $f(x) + \frac{\lambda}{2}\|x\|^2=0$ (L2 regularization). \citet{loizou2021stochastic} discusses this issue.}.
In addition, a severe limitation of SPS is that the convergence guarantee in the stochastic setting is for a neighborhood that is independent of the hyperparameters. As such, \textit{no convergence to an arbitrary suboptimality can be guaranteed}. \citet{orvieto2022dynamics} addressed this problem by proposing DecSPS, a variant that gradually decreases~$\gamma_b$ in~\eqref{eq:stoch-polyak-stepsize} and can replace $f^*_{i_k}$ by a lower bound, and proved its convergence to the actual problem solution~(i.e., with arbitrary precision) without knowledge of~$L$. However, their analysis requires strong convexity to effectively bound the iterates.

\paragraph{Other related work.}
Another related line of work is on parameter-free methods for online optimization
\citep{orabona2016coin, orabona2017training, cutkosky2020free} and the D-adaptation methods \citep{defazio2023learning, mishchenko2024prodigy}. Given a fixed number of steps to run, \cite{zamani2023exact} showed that the subgradient method with a linearly decaying stepsize enjoys last-iterate convergence, and \cite{defazio2023when} extended the result to stochastic gradient methods with an online-to-batch conversion.
Very recently, \cite{defazio2024road} proposed a method that combines the Polyak-Ruppert averaging with momentum and demonstrated promising performance gains for both convex optimization and deep learning.



\subsection{Contributions and outline}
\label{sec:contr}

In this paper, we propose a new adaptive stepsize strategy that shows promising performance compared to previous approaches both in theory and in practice. 
The basic idea is to exploit the nonnegativity of the loss functions (ubiquitous in machine learning) by expressing them as the composition of a square and their real-valued square roots. 
This reformulation allows us the apply the classical Gauss-Newton method \citep[e.g.,][\S10.3]{nocedal06book}, or the Levenberge-Marquardt method when adding a quadratic regularization \citep{levenberg44,marquardt63}. 
By minimizing the Nonnegative Gauss-Newton (NGN) estimates of $f_{i_k}$, we derive the NGN stepsize
\begin{equation}\label{eq:ngn}
    \gamma_k = \frac{\sigma}{1+\frac{\sigma}{2f_{i_k}(x^k)}\|\nabla f_{i_k}(x^k)\|^2},
\end{equation}
where $\sigma>0$ is a regularization parameter. 

\begin{table}[t]
\centering
\makegapedcells 
\includegraphics[width=0.9\textwidth]{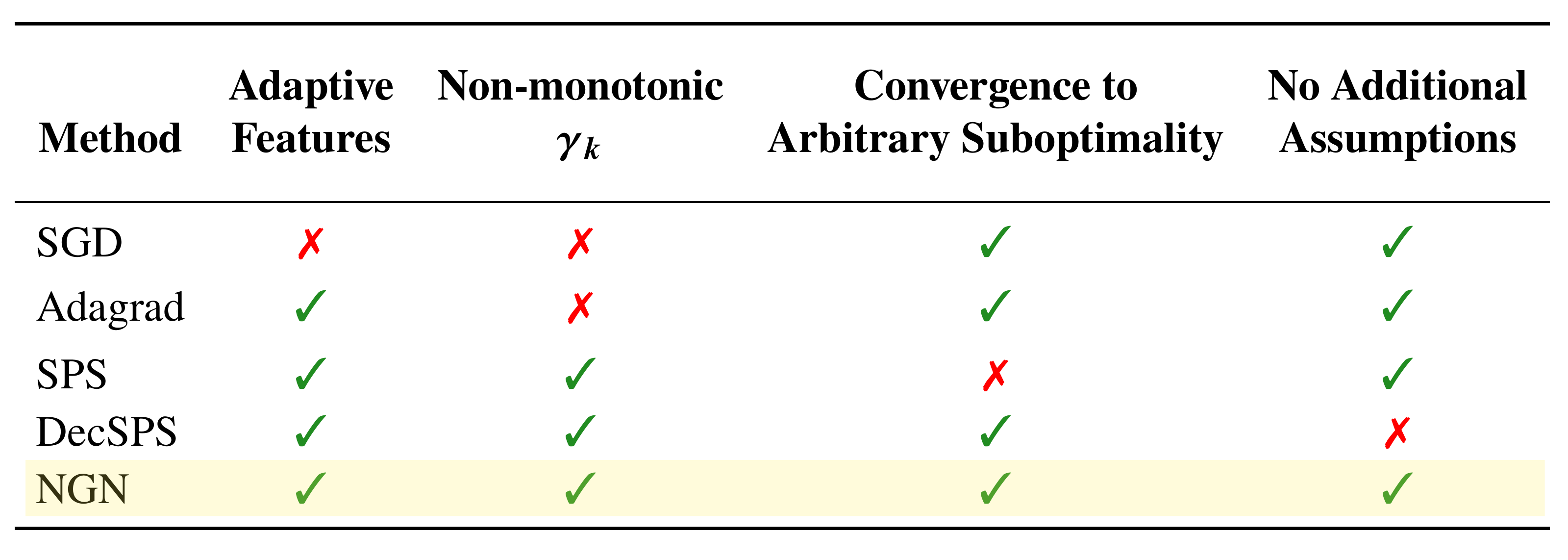}
\caption{Convergence features of SGD and some adaptive methods with convergence guarantees in stochastic convex smooth optimization. Discussion in Sections~\ref{sec:adaptive} and~\ref{sec:contr}.}
\label{tab:optimization_methods}
\end{table}


In Section~\ref{sec:deterministic}, we focus on the deterministic setting (when the full gradient $\nabla f(x^k)$ is available or $N=1$) to derive the NGN stepsize and study its basic properties:
\begin{itemize}\itemsep 0pt
    \item Our derivation reveal that NGN has a clear connection to second-order methods, while sharing the same low computational cost as SGD (Section~\ref{sec:derivation}).
    \item NGN is stable to the selection of its hyperparameter $\sigma$: empirically, non-diverging dynamics are observed for any choice of $\sigma$---a feature that sets the method drastically apart from vanilla SGD that we prove precisely in the convex stochastic setting~(Section~\ref{sec:properties}).
    \item NGN interpolates between an agnostic version of the Polyak stepsize (setting $f^*=0$) and a small constant stepsize (Section~\ref{sec:polyak}), leading to an adaptive \emph{non-monotonic} behavior that can effectively \emph{warm-up} in the beginning and then settle down near the solution (Section~\ref{sec:exp_det}).
\end{itemize}
In Section~\ref{sec:general}, we present an alternative derivation of NGN from a generalized Gauss-Newton perspective, further revealing its curvature adaptation property.
In Section~\ref{sec:stochastic}, we present our main results on convergence analysis in the stochastic setting:
\begin{itemize}\itemsep 0pt
    \item For minimizing smooth convex loss functions~(Section~\ref{sec:cvx-sc-rates}), we show that NGN stepsize can converge to a neighborhood of arbitrary size around the solution\footnote{For SPS~\citet{loizou2021stochastic} the convergence neighborhood is independent of the hyperparameters.} \emph{without knowledge of the gradient Lipschitz constant} and without assumptions such as bounded domain or iterates. The method also provably never diverges.
    Compared to SPS, it behaves much more favorably under no knowledge of $f^*$ or the single $f_{i_k}^*$s. Compared to Adagrad variants, our analysis can be extended to the strongly convex setting with \emph{fully-adaptive linear rate}.
    \item For non-convex smooth optimization (Section~\ref{sec:nonconvex}), NGN stepsize is guaranteed to converge at the same speed as SGD with constant stepsize. In both the convex and non-convex settings, we can gradually decrease the hyperparameter~$\sigma$ and obtain asymptotic convergence in the classical sense (Section~\ref{sec:proofs_annealed_sigma}).
    \item In Section~\ref{sec:experiments}, we conduct extensive numerical experiments on deep learning tasks. We found that NGN stepsize has clear advantage over SGD with constant stepsize, SPS, and AdaGrad-norm in terms of convergence speed and robustness against varying hyperparameters. 
    While our main focus is on adaptive scalar stepsizes for the basic SGD method~\eqref{eq:sgd}, we also conduct experiments to compare with the popular Adam method \citep{kingma2014adam}, which uses both momentum and coordinate-wise stepsizes. Our results show that NGN performs better than Adam in minimizing the training loss and is more robust to hyperparameter tuning, not to mention that NGN takes much less memory to implement.
\end{itemize}
In Section~\ref{sec:conclusion}, we conclude the paper and  point to several interesting directions for future work.

\section{Deterministic setting: derivation and basic properties}
\label{sec:deterministic}

We derive our algorithm when the full gradient $\nabla f(x^k)$ is available (Section~\ref{sec:derivation}), which corresponds to the case of $N=1$ in~\eqref{eq:erm} or when $N$ is relatively small.
Most of our geometric intuition about NGN is presented in the deterministic setting, in Section~\ref{sec:properties}.
In Section~\ref{sec:general}, we provide a more general perspective related to the generalized Gauss-Newton method.

\subsection{Algorithm derivation}
\label{sec:derivation}
 Assume $f:\R^d\to\R$ is differentiable and \textit{non-negative}, i.e., $$f^* := \inf_{x\in\R^d}f(x)\geq 0.$$
 We define $r(x):=\sqrt{f(x)}$ and consequently have $f(x) = r^2(x)$. Therefore, 
 $$\nabla f(x)=2r(x)\nabla r(x) \qquad\text{and}\qquad \nabla r(x) = \frac{1}{2\sqrt{f(x)}}\nabla f(x).$$
The Gauss-Newton update leverages a first-order Taylor expansions of $r(x+p)$ around $x$:
\[
    f(x+p) = r^2(x+p) \simeq \left(r(x)+ \nabla r(x)^\top p\right)^2.
\]
We use this approximation to estimate $f(x+p)$ around $x$ and propose an update $p$ by minimizing
\begin{equation}\label{eq:NGN-approx}
\tilde{f}_{\sigma}^\text{\tiny \ NGN}(x+p) := \left(r(x)+ \nabla r(x)^\top p\right)^2 + \frac{1}{2\sigma}\|p\|^2,
\end{equation}
where $\|\cdot\|$ is the Euclidean norm and $\frac{1}{2\sigma}\|p\|^2$ is a regularization term expressing the confidence $\sigma>0$ in the estimate $\tilde{f}_{\sigma}^\text{\tiny \ NGN}$. Crucially, we note that \textit{this approximation preserves non-negativity}, in contrast to the Taylor expansion of $f$ \citep{chen2011hessian}. The minimizer of $\tilde{f}_{\sigma}^\text{\tiny \ NGN}(x+p)$ with respect to $p$ can be found by setting $\nabla_{\!p} \tilde{f}_{\sigma}^\text{\tiny \ NGN}(x+p)=0$, i.e.,
\begin{align*}
    \nabla_{\!p} \tilde{f}_{\sigma}^\text{\tiny \ NGN}(x+p) 
    &= 2 \left(r(x)+ \nabla r(x)^\top p\right) \nabla r(x) + \frac{1}{\sigma} p\\ &= 2r(x) \nabla r(x) + \left[2\nabla r(x) \nabla r(x)^\top+\frac{1}{\sigma} I\right] p \overset{!}{=}0,
\end{align*}
where $I$ is the $d\times d$ identity matrix. Therefore $p\in\R^d$ needs to satisfy the normal equation 
\[
    \left[\nabla r(x) \nabla r(x)^\top + \frac{1}{2\sigma}I\right] p = - r(x) \nabla r(x).
\]
This equation can be solved analytically using the Sherman-Morrison formula\footnote{Let $A$ be an invertible matrix in $\R^{d\times d}$ and $u,v\in\R^d$, then $(A+uv^\top)^{-1} = A^{-1} -\frac{A^{-1}uv^\top A^{-1}}{1+v^\top A^{-1} u}$.}:
\begin{align*}
     p &\,=\, -\left[\nabla r(x) \nabla r(x)^\top + \frac{1}{2\sigma}I\right]^{-1} r(x) \nabla r(x)
     \;=\; - \frac{2\sigma r(x) \nabla r(x)}{1+2\sigma\|\nabla r(x)\|^2}.
\end{align*}
Now recall that $r(x)=\sqrt{f(x)}$ and $\nabla r(x) = \frac{1}{2\sqrt{f(x)}}\nabla f(x)$, so that $\|\nabla r(x)\|^2 = \frac{1}{4f(x)}\|\nabla f(x)\|^2$ and $r(x)\nabla r(x) = \frac{1}{2}\nabla f(x)$. Therefore, we get
\begin{equation}\label{eq:NGN-det-p}
    p^{\text{\tiny NGN}}_{\sigma} = -\frac{\sigma \nabla f(x)}{1 + \frac{\sigma}{2f(x)} \|\nabla f(x)\|^2}.
\end{equation}
The derivation above suggests the update rule $x\leftarrow x+p^{\text{\tiny NGN}}_{\sigma}$, leading to the deterministic NGN (NGN-det) method:
\begin{equation}\label{eq:NGN-det}
    x^{k+1} = x^k -\frac{\sigma}{1 + \frac{\sigma}{2f(x^k)} \|\nabla f(x^k)\|^2} \nabla f(x^k).
\end{equation}
It is a form of gradient descent $x^{k+1}=x^k-\gamma_k\nabla f(x^k)$ with the \emph{adaptive} NGN stepsize
\begin{equation}\label{eq:NGN-det-gamma}
    \gamma_k^\text{\tiny \ NGN}:=\frac{\sigma}{1 + \frac{\sigma}{2f(x^k)} \|\nabla f(x^k)\|^2}.
\end{equation}
In the rest of this section, we study the basic properties and convergence guarantees of NGN-det.

\subsection{Basic properties of NGN}
\label{sec:properties}
We give below an overview of the properties of the NGN stepsizes, here discussed in the deterministic setting but with direct application in the stochastic case. Some of these properties constitute the workhorse of our convergence analysis, in Section~\ref{sec:stochastic}.
However, we do not provide the convergence rates in the deterministic setting here because they can be obtained from the results in Section~\ref{sec:stochastic} by setting $N=1$.

\subsubsection{Non-negative estimation}
The NGN update $p^{\text{\tiny NGN}}_\sigma(x)$ in~\eqref{eq:NGN-det-p} leverages a local first-order \textit{non-negative} estimate of the loss $f$:
\begin{align}
p^{\text{\tiny NGN}}_{\sigma}(x) &= \argmin_{p\in\R^d} \left[ \tilde{f}_{\sigma}^\text{\tiny \ NGN}(x+p) := \biggl(\sqrt{f(x)}+ \frac{1}{2\sqrt{f(x)}}\nabla f(x)^\top p\biggr)^2 + \frac{1}{2\sigma}\|p\|^2\right].
\label{eq:NGN_estimate}
\end{align} 
This feature is fundamental to NGN and draws a clear distinction from vanilla gradient descent~(GD): indeed, the update $x\leftarrow x+ p^{\text{\tiny GD}}$ of GD with constant stepsize~$\gamma$ is the result of minimization of a first-order estimate of $f$ which is not necessarily non-nagative~(as opposed to the original $f$):
\begin{equation}
    p^{\text{\tiny GD}}_\gamma(x) = \argmin_{p\in\R^d} \left[\tilde{f}_{\gamma}^\text{\tiny \ GD}(x+p) := f(x) + \nabla f(x)^\top p + \frac{1}{2\gamma}\|p\|^2\right] = -\gamma\nabla f(x).
\label{eq:GD_estimate}
\end{equation}

The distinction outlined above can be visualized on toy examples in Figure~\ref{fig:NGN_approx_1d}. It is clear from these examples that the NGN update, minimizing a non-negative first-order estimate of $f$, is more conservative compared to GD with stepsize $\gamma=\sigma$~(the NGN regularization hyperparameter), especially in regions where gradients are large. The stepsize range of NGN will be characterized more precisely in Section~\ref{sec:step_range}, and is linked to the adaptive nature of our method, described later in Section~\ref{sec:polyak}. We note that it is precisely this conservative nature that allows the convergence rates in Section~\ref{sec:stochastic} to hold for an arbitrarily large $\sigma$---in contrast to gradient descent and most adaptive methods, which diverge for large stepsize parameters.


\begin{figure}[t]
    \centering
    \includegraphics[width=0.87\textwidth]{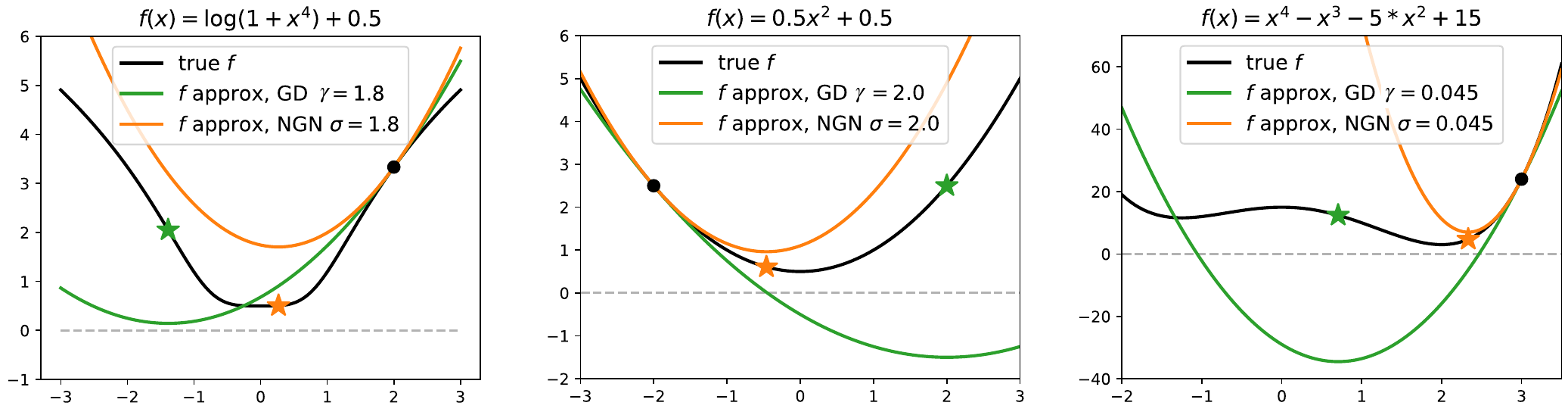}
    \caption{NGN and GD updates and corresponding objective function estimate in~Eqs.~\eqref{eq:NGN_estimate} and~\eqref{eq:GD_estimate} on a few toy examples~(inspired by~\cite{chen2011hessian}). The black dot denotes the initial~$x$, ald the star is the position after one step: $x+p$. Compared to GD with stepsize $\gamma=\sigma$, NGN is more conservative if the landscape is sharp. Note that the function approximation provided by NGN is always non-negative, as clear from  our motivation and the algorithm derivation.}
    \label{fig:NGN_approx_1d}
\end{figure}

\subsubsection{Range of NGN stepsize}
\label{sec:step_range}
Suppose that $f:\R^d\to \R$ is $L$-smooth and has minimum value $f^*$. 
Then for all $x\in\R^d$, we have \citep[e.g.,][\S2.1]{nesterov2018lectures}.
\[
    2L(f(x)-f^*) \ge \|\nabla f(x)\|^2.
\]
Since $f^*\ge0$, we obviously have
\( 2Lf(x) \ge \|\nabla f(x)\|^2 \), which is equivalent to
$$0\le\frac{\|\nabla f(x)\|^2}{2f(x)}\le L.$$ 
These bounds directly imply a range for $\gamma_k$, as characterized in the following lemma.

\begin{lemma}[Stepsize bounds]
\label{lemma:step_bounds}
Suppose $f:\R^d\to\R$ is non-negative, differentiable and $L$-smooth. Then the NGN-det stepsize given in~\eqref{eq:NGN-det-gamma} satisfies
\begin{equation}
    \gamma_k^{\text{\tiny \emph{NGN}}} \in \left[\frac{\sigma}{1+\sigma L},\sigma\right] = \left[\frac{1}{L +\sigma^{-1}},\sigma\right].
\end{equation}
\end{lemma}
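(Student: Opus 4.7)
The plan is to exploit the two preparatory inequalities that the authors have just recalled in the text above the lemma and then substitute them directly into the closed form~\eqref{eq:NGN-det-gamma}. Concretely, I would first establish the double bound
\[
0 \;\le\; \frac{\|\nabla f(x)\|^2}{2f(x)} \;\le\; L
\]
for every $x\in\R^d$. The left inequality is immediate since $f(x)\ge f^*\ge 0$ and the numerator is a square. The right inequality follows by combining $L$-smoothness, which gives $\|\nabla f(x)\|^2 \le 2L(f(x)-f^*)$ \citep[\S2.1]{nesterov2018lectures}, with the non-negativity $f^*\ge 0$, yielding $\|\nabla f(x)\|^2 \le 2L\,f(x)$.

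The second step is a one-line substitution into~\eqref{eq:NGN-det-gamma}. Multiplying the above bounds by $\sigma$ and adding $1$ gives
\[
1 \;\le\; 1+\frac{\sigma}{2f(x^k)}\|\nabla f(x^k)\|^2 \;\le\; 1+\sigma L,
\]
so that taking reciprocals and multiplying by $\sigma$ yields the advertised range
\[
\frac{\sigma}{1+\sigma L} \;\le\; \gamma_k^{\text{\tiny NGN}} \;\le\; \sigma,
\]
which is exactly the statement of the lemma after rewriting $\tfrac{\sigma}{1+\sigma L}=\tfrac{1}{L+\sigma^{-1}}$.

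The only genuinely delicate point is the degenerate case $f(x^k)=0$, which would make the denominator of $\gamma_k^{\text{\tiny NGN}}$ ill-defined. I would address this by noting that the smoothness/non-negativity bound $\|\nabla f(x^k)\|^2\le 2Lf(x^k)$ forces $\nabla f(x^k)=0$ whenever $f(x^k)=0$; in that case the NGN update $p^{\text{\tiny NGN}}_\sigma=0$ is well defined by continuity with any $\gamma_k\in[\sigma/(1+\sigma L),\sigma]$, and the lemma holds vacuously. Beyond this mild technicality, there is no real obstacle: the lemma is essentially a restatement of the two inequalities displayed just before it, and the bulk of the work sits in the smoothness estimate that is quoted from the standard reference.
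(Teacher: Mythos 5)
Your proof is correct and follows exactly the route the paper takes: the displayed inequalities $0 \le \|\nabla f(x)\|^2/(2f(x)) \le L$, derived from $L$-smoothness and $f^*\ge 0$ immediately above the lemma, are substituted into the closed form of $\gamma_k^{\text{\tiny NGN}}$, which is why the paper states the lemma without a separate proof. Your treatment of the degenerate case $f(x^k)=0$ is a sensible addition the paper leaves implicit.
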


This property shows that the maximum achievable stepsize is bounded by~$\sigma$, but the algorithm can adaptively decrease it until $1/(L+\sigma^{-1})$ if the landscape gets more challenging. 




\subsubsection{Connection with Polyak stepsize}
\label{sec:polyak}
The reader can readily spot a connection between the NGN-det stepsize~\eqref{eq:NGN-det-gamma} and the Polyak stepsize~\eqref{eq:polyak-stepsize}.
To be more precise, given $f^*\geq 0$, we define the $f^*$-agnostic Polyak stepsize (APS) 
\begin{equation}\label{eq:agnostic-PS}
    \gamma_k^\text{\tiny APS} = \frac{f(x^k)}{\|\nabla f(x^k)\|^2}.
\end{equation}
We can express the NGN stepsize as the harmonic mean of APS and the constant stepsize $\sigma/2$:
\[
\gamma_k^{\text{\tiny NGN}} 
~=~ \frac{2}{\frac{2}{\sigma} + \frac{\|\nabla f(x^k)\|^2}{f(x^k)}}
~=~ \frac{2}{\frac{1}{\sigma/2}+\frac{1}{\gamma_k^{\text{\tiny APS}}}}.
\]
Intuitively, as $\sigma$ increases (less regularization in~\eqref{eq:NGN-approx}) , NGN relies more on the Gauss-Newton approximation.  
In the limiting case of $\sigma\to\infty$, we have
$\gamma_k^{\text{\tiny NGN}}\to 2\gamma_k^{\text{\tiny APS}}$, which is two times the APS.
On the other hand, as $\sigma\to 0$, the regularization in~\eqref{eq:NGN-approx} dominates the approximation and we get back to gradient descent with a small constant stepsize $\gamma_k=\sigma$. As such, 
%
\begin{center}
     \textit{NGN interpolates between the $f^*$-agnostic Polyak stepsize and a small constant stepsize.}
 \end{center}

\begin{figure}[t]
    \centering
\includegraphics[width=0.96\textwidth]
{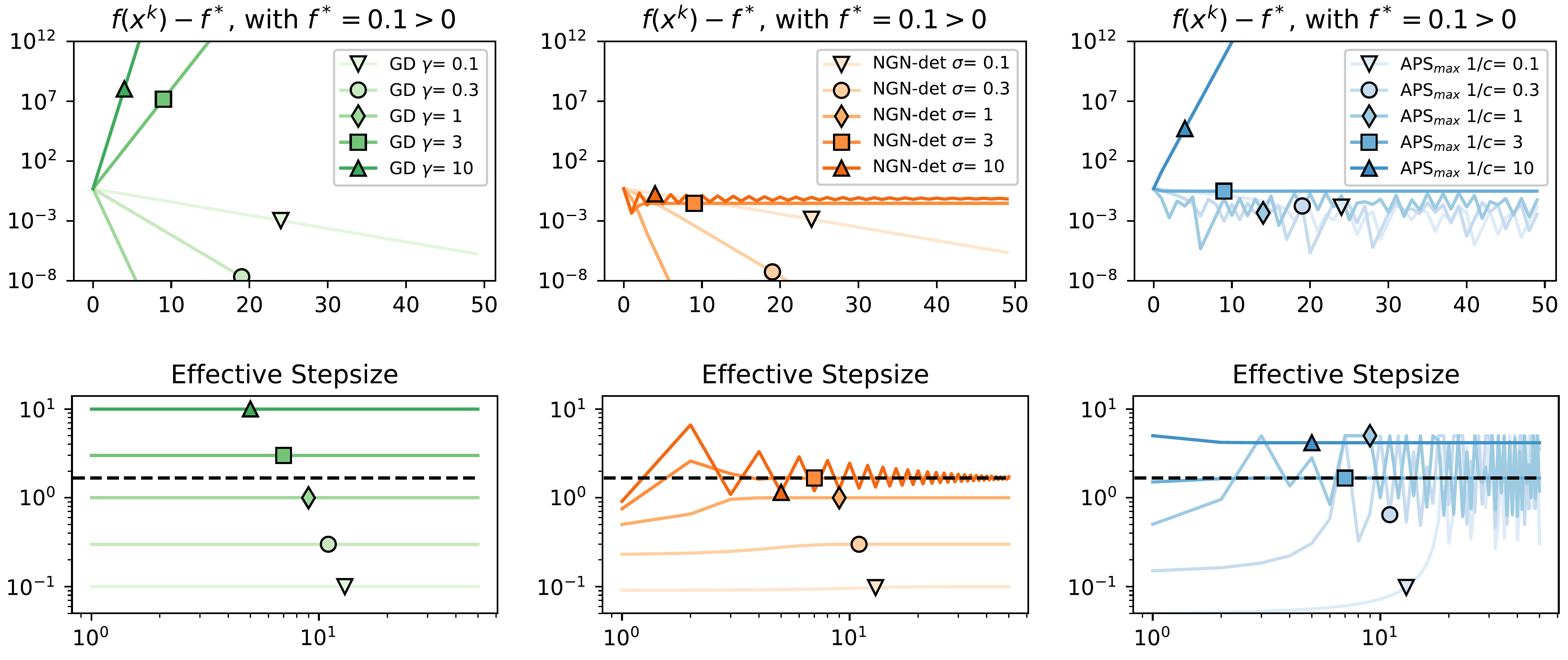}
    \caption{Optimization dynamics of constant-stepsize GD, NGN, and APS$_{\max}$ on the toy example $f(x) = \frac{\lambda}{2} (x-x^*)^2+f^*$, for different hyperparameter values. NGN is stable for any finite $\sigma>0$. Dashed line in the bottom row is the value $2/\lambda$.}
    \label{fig:1d_ngn_sps}
\end{figure}

We like to spend a few more words on the insightful connection between PS and NGN. As already mentioned, one obvious drawback of PS, persisting in its stochastic variant SPS~\citep{loizou2021stochastic}, is the required knowledge of $f^* = \inf_x f(x)$. However, \citet{orvieto2022dynamics} showed that convergence to a ball around the solution, of size proportional to $f^*$, can be retrieved both in the deterministic and stochastic setting by replacing~$f^*$ with any lower bound $\ell^*\le f^*$---in particular $\ell^*=0$, which becomes NGN with $\sigma\to\infty$. It is therefore interesting to compare NGN with the following PS variant inspired by the SPS$_{\max}$ rule~\citep{loizou2021stochastic,orvieto2022dynamics}:
\begin{equation}
    \gamma_k^{\text{\tiny APS}_{\max}}
    = \min\left\{\frac{f(x^k)}{c\|\nabla f(x^k)\|^2},~\gamma_b\right\},
\end{equation}
where $c>0$ is a hyperparameter and $\gamma_b>0$ bounds the maximum stepsize achievable. 
For this purpose, we consider the simplest convex optimization setting with a one-dimensional quadratic $f(x) = \frac{\lambda}{2} (x-x^*)^2+f^*$, where $f^*=0.1>0$ and $\lambda=1.2$. 
Figure~\ref{fig:1d_ngn_sps} shows the loss dynamics for GD, NGN and APS$_{\max}$ with $\gamma_b=3$ (results are not sensitive to $\gamma_b$ when it is big enough).
Ablating on different values of $\sigma$ for NGN, $c$ for APS$_{\max}$ and $\gamma$ for constant-stepsize GD, one notices a striking feature of NGN: its iterates \textit{never grow unbounded} for any value of $\sigma$. Instead, for $\gamma$ big enough or $c$ small enough, GD and APS$_{\max}$ grow unbounded. 
This property of NGN is also observed in our experiments on convex classification~(Fig.~\ref{fig:cvx_det_all}) and neural networks~(Fig.~\ref{fig:nn_res}), and we will give it a formal proof in Section~\ref{sec:stochastic}.
If~$c$ has a high enough~($L$-independent) value, then \citet{orvieto2022dynamics} showed that SPS$_{\text{max}}$ also does not grow unbounded. However, it cannot obtain an arbitrary suboptimality 
(by tuning the hyperparameters of $\gamma_k^\text{\tiny \ APS$_{\max}$}$)---which instead can be achieved by NGN in this setting, as shown by our theory in Section~\ref{sec:stochastic}. This issue of SPS$_{\text{max}}$ is related to its bias problem, thoroughly explored in~\citet{orvieto2022dynamics}.

From Figure~\ref{fig:1d_ngn_sps}, we also observe another important property of NGN with large~$\sigma$: its stepsize converges to a value bounded by $2/\lambda$---the curvature-informed stability threshold for the corresponding dynamics. We remark that both~$\lambda$ and~$f^*$ do not appear in the NGN update rule. This shows a remarkable curvature adaptation property of NGN, which is linked to the derivation in Section~\ref{sec:general}. In contrast, APS$_{\max}$, while in perfect agreement with the findings in~\cite{orvieto2022dynamics} converges to a ball around the solution for big enough hyperparameter $c$, its performance is negatively affected by having no access to $f^*$~(cf. Figure~\ref{fig:cvx_det_all}). 


\subsection{Experiments on convex classification problems}
\label{sec:exp_det}

\begin{table}[p]
    \centering
    \begin{tabular}{|c|c|c|c|c|c|}
        \hline
        \textbf{Dataset} & \textbf{Features Dimension} & \textbf{\# Datapoints} & \textbf{\# Classes} & \textbf{L2 Regularization} \\
        \hline
        Cancer & 30 & 569 & 2 & $1 \times 10^{-4}$ \\
        \hline
        Wine & 13 & 178 & 3 & $1 \times 10^{-4}$\\
        \hline
        Digits & 64 & 1797 & 10 & $1 \times 10^{-4}$\\
        \hline
    \end{tabular}
    \caption{LIBSVM~\citep{chang2011libsvm} dataset information, results in Figure~\ref{fig:cvx_det_all}.}
    \label{tab:dataset}
\end{table}

\begin{figure}[p]
    \centering
    \includegraphics[width=\textwidth]{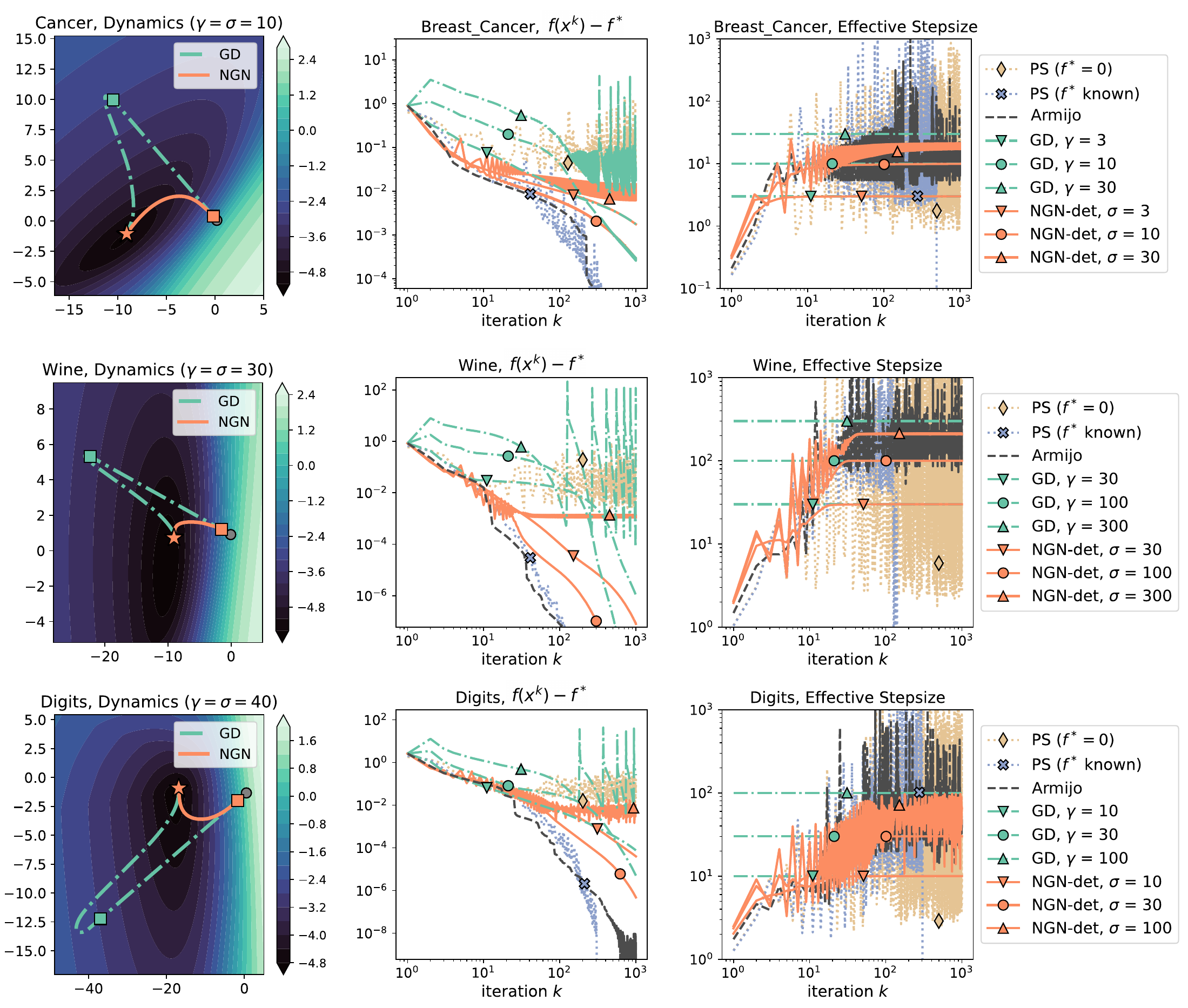}
    \small
    \caption{Deterministic NGN compared with constant-stepsize Gradient Descent, Polyak Stepsizes, and Armijo Line search on three classification datasets listed in Table~\ref{tab:dataset}. The center column shows the optimality gap $f(x^k)-f^*$, the right column plots the evolution of stepsizes, and the right column shows the projection of the iterate trajectories onto the top-two PCA components.
    On the trajectories plot, the circle denotes the starting point, the star denotes the solution found at the last iteration, and the square represents the point after one iteration.}
    \label{fig:cvx_det_all}
\end{figure}

In Figure~\ref{fig:cvx_det_all}, we evaluate the \textit{full-batch} training performance of NGN-det on convex classification~(using the cross-entropy loss). Specifically, we consider three classification datasets from LIBSVM~\citep{chang2011libsvm}, whose details are given in Table~\ref{tab:dataset}.

We compare NGN with constant-stepsize gradient descent (GD), standard Polyak stepsize~(PS) with or without knowledge of $f^*>0$, and GD with Armijo line search~(LS).  
For GD and NGN-det, we sweep over hyperparameters~($\gamma$ and $\sigma$) on a logarithmic grid and denote the best-performing option with marker \cc. Performace for 3-times-lower and 3-times-higher hyperparameters is marked by symbols \td \  and \tu \ , respectively. 
For all PS variants, we select $c=1$~(see Section~\ref{sec:polyak}), as suggested by the results in~\citet{loizou2021stochastic}.

In Figure~\ref{fig:cvx_det_all}, we plot the loss function optimality gap, the stepsize, as well as the projection of the iterate dynamics onto the top-two principle components. They clearly showcase NGN's adaptivity to large gradients at the beginning and its stability to high values of~$\sigma$ (cf. Section~\ref{sec:polyak}). Moreover, 
the dynamics of the NGN stepsize exhibit a warmup phase followed by convergence to a flat value towards the end ($\gamma_k\to\sigma$ if~$\sigma$ is not too large);
see bounds in Section~\ref{sec:step_range}. A similar behavior is observed for the line-search stepsize~(LS) and for PS using the correct value of~$f^*$ (here $f^*>0$ because we have more datapoints than dimensions; see Table~\ref{tab:dataset}). Of course, in practice $f^*$ is not accessible beforehand. Plugging in the lower bound $f^*=0$ still guarantees PS approaches the minimal loss~(see discussion in Section~\ref{sec:polyak}), but the iterates do not converge to the true solution---in contrast to NGN~(See Figure~2 in~\citet{orvieto2022anticorrelated}). 

\section{A generalized Gauss-Newton perspective}
\label{sec:general}
Before moving on to the stochastic case and presenting the  convergence analysis,
we provide here an alternative derivation of the NGN method---with increased generality compared to the one given in Section~\ref{sec:derivation}. This generalized Gauss-Newton (GNN) perspective allows us to make a closer connection to second-order methods and better understand the curvature adaptation property of NGN demonstrated in Figures~\ref{fig:1d_ngn_sps} and~\ref{fig:cvx_det_all}.

Suppose $f:\R^d\to\R$ can be written as the composition of two functions: $f(x) = h(c(x))$, where $h:\R\to\R$ is differentiable and nonnegative and $c:\R^d\to\R$ is differentiable. 
In the setting of Section~\ref{sec:derivation}, we have $h(c) = c^2$ and $c(x) = r(x) = \sqrt{f(x)}$. The gradient of~$f$ is
\begin{align*}
\nabla f(x) &= h'(c(x)) \nabla c(x), 
\end{align*}
and its Hessian can be written as
\begin{align}
\nabla^2 f(x) &= h'(c(x)) \nabla^2 c(x) + h''(c(x)) \nabla c(x)\nabla c(x)^\top \nonumber \\
&= \underbrace{h'(c(x)) \nabla^2 c(x)}_{\text{Difficult to compute}} + \underbrace{\frac{h''(c(x))}{h'(c(x))^2}\nabla f(x)\nabla f(x)^\top}_{\text{Easy to compute}},
\label{eq:ggn-hessian}
\end{align}
where in the last equality we applied the substitution $\nabla c(x)=\nabla f(x)/h'(c(x))$.
Notice that $h'(c(x))=0$ if and only if $h$ is minimized at $c(x)$ and thus $f$ is minimized at~$x$.
Therefore we can assume without loss of generality that $h'(c(x))\neq 0$.

We have seen in~\eqref{eq:GD_estimate} that the GD update $p^{\text{GD}}_\sigma$ minimizes the first-order approximation $\tilde{f}^{\,\text{GD}}_\sigma$.
Similarly, the canonical second-order method, Newton's method, derive the update as
\[
p^{\,\textrm{Newton}} = \argmin_p \left\{ \tilde{f}^{\,\textrm{Newton}}(x+p) := f(x) + \nabla f(x)^\top p + \frac{1}{2}p^\top \nabla^2 f(x) p \right\},
\]
where $\nabla^2 f(x)$ is given in~\eqref{eq:ggn-hessian}. In order to avoid the ``difficult-to-compute'' part of the Hessian, the generalized Gauss-Newton (GGN) method replaces it with a simple approximation $\frac{1}{\sigma}I$:
\begin{equation}\label{eq:ggn-approx}
\tilde{f}^{\,\textrm{GGN}}_\sigma(x+p) := 
f(x) + \nabla f(x)^\top p + \frac{1}{2}p^\top \left[ \frac{1}{\sigma} I + q(x) \nabla f(x)\nabla f(x)^\top \right] p.
\end{equation}
where 
\[
q(x) := \frac{h''(c(x))}{h'(c(x))^2}.
\]
The GGN update $p^{\text{GGN}}$ is the minimizer of 
$\tilde{f}^{\,\textrm{GGN}}_\sigma(x+p)$, which we obtain by forcing its gradient with respect to~$p$ to be zero:
%
\begin{equation}
    \nabla_p \tilde f^{\,\text{GGN}}_\sigma(x+p) 
    = \nabla f(x) + \left[ \frac{1}{\sigma}I + q(x) \nabla f(x)\nabla f(x)^\top \right] p \overset{!}{=} 0,
    \label{eq:GGN_drop}
\end{equation}
and it results in
\begin{equation}\label{eq:ggn-update}
p^{\text{GGN}} = - \frac{\sigma}{1+\sigma q(x)\|\nabla f(x^k)\|^2} \nabla f(x^k).
\end{equation}
%
Next we give several examples of the GGN update by choosing a specific function $h$.
\begin{itemize}
\item \textbf{Quadratic.} In this case, we have $h(c)=c^2$, $h'(c)=2c$ and $h''(c)=2$, thus 
$$q(x)=\frac{1}{c(x)^2} = \frac{1}{2f(x)},$$
and the GGN update~\eqref{eq:ggn-update} becomes the NGN update~\eqref{eq:NGN-det-p}.
In this case, the GGN approximation~\eqref{eq:ggn-approx} reduces exactly to the NGN approximation~\eqref{eq:NGN_estimate}.
\item \textbf{Monomial.}
For a generic $\alpha$-scaled monomial $h(c) = \frac{\alpha}{p}c^p$ we have $h'(c) = \alpha c^{p-1}$ and $h''(c) = \alpha (p-1) c^{p-2}$. Therefore,
\[
    \frac{h''(c)}{h'(c)^2} = \frac{\alpha(p-1) c^{p-2}}{\alpha^2 c^{2p-2}} = \frac{p-1}{\alpha c^p} = \frac{1}{\frac{p}{p-1} h(c)}\quad \implies \quad q(x) = \frac{1}{\frac{p}{p-1} f(x)}.
\]
For $p=2$, we get back to the NGN update.
\item \textbf{Negative logarithm.}
If $h(y) = -\log(y)$, with $y\in(0,1)$ being \textit{the model likelihood}, then $h'(y) = -1/y$ and $h''(y) = 1/y^2$, therefore $q(x) = 1$ and we have
$$x^{k+1} = x^k -\frac{\sigma}{1+\sigma\|\nabla f(x)\|^2}\nabla f(x).$$ 
This can be considered as the scalar case of the diagonal Levenberg-Marquardt method when using the log-likelihood loss \citep[\S9.1]{lecun98efficient}.
\end{itemize}
In our empirical study, the choice $h(c)=c^2$ works best among the monomials with different exponent~$p$ and also performs slightly better than the negative logarithm. In the rest of this paper, we focus on the quadratic~$h$ for convergence analysis and presenting the experiment results.

\section{Stochastic setting: convergence rates and experiments}
\label{sec:stochastic}

In this section, we study the application of NGN to stochastic optimization. Specifically, we consider the loss function in~\eqref{eq:erm}
and assume that each $f_i:\R^d\to\R$ is non-negative and $L_i$-smooth. 
In this section, we let $L=\max_{i\in[N]} L_i$ instead of the average of~$L_i$ as in the deterministic setting; alternatively, we can simply assume that each~$f_i$ has the same smoothness constant~$L$. 
We consider the generic SGD method~\eqref{eq:sgd} where the stepsize $\gamma_k$ is obtained by applying the NGN method to the current, randomly picked function $f_{i_k}$, i.e., 
\begin{equation}\label{eq:stoch-NGN}
    \gamma_k = \frac{\sigma}{1+\frac{\sigma}{2f_{i_k}(x^k)} \ \|\nabla f_{i_k}(x^k)\|^2}.
\end{equation}
All of our results are presented in the stochastic setting, and the convergence rates for the deterministic setting can be recovered by simply taking $N=1$.

\subsection{Preliminary lemmas}
We present here a few fundamental lemmas, heavily used both in the constant $\sigma$ setting~(Section~\ref{sec:fixed_sigma}) and in the decreasing $\sigma$ case~(Section~\ref{sec:proofs_annealed_sigma}).

\begin{lemma}[Fundamental Equality]
\label{lemma:NGN_lemma}
The NGN stepsize $\gamma_k$ in~\eqref{eq:stoch-NGN} satisfies
\begin{equation*}
    \gamma_k \|\nabla f_{i_k}(x)\|^2= 2\left(\frac{\sigma - \gamma_k}{\sigma}\right) f_{i_k}(x).
\end{equation*}
\end{lemma}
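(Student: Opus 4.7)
The plan is to prove this identity by direct algebraic manipulation of the definition of the NGN stepsize in~\eqref{eq:stoch-NGN}. There is no deeper structural argument needed here; the equality is really just a rearrangement of the formula that defines $\gamma_k$, and I expect no serious obstacle.

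First, I would abbreviate $f := f_{i_k}(x)$ and $g := \|\nabla f_{i_k}(x)\|^2$ to keep the display compact. The defining relation in~\eqref{eq:stoch-NGN} reads $\gamma_k = \sigma / \bigl(1 + \tfrac{\sigma g}{2f}\bigr)$. Clearing the denominator yields
\[
\gamma_k\left(1 + \frac{\sigma\, g}{2f}\right) = \sigma,
\]
which, after moving $\gamma_k$ to the right-hand side, becomes $\tfrac{\gamma_k \sigma g}{2f} = \sigma - \gamma_k$. Multiplying both sides by $2f/\sigma$ gives exactly the claimed identity
\[
\gamma_k\, g \;=\; \frac{2f(\sigma - \gamma_k)}{\sigma} \;=\; 2\!\left(\frac{\sigma-\gamma_k}{\sigma}\right) f.
\]

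One small technical point worth addressing in the write-up: the manipulation is valid whenever $f_{i_k}(x) > 0$. If $f_{i_k}(x) = 0$, then since each $f_i$ is nonnegative, $x$ is a global minimizer of $f_{i_k}$ and $\nabla f_{i_k}(x) = 0$; in that edge case the identity holds trivially with both sides equal to zero (and $\gamma_k = \sigma$ under the natural convention that $0/0 = 0$ in the denominator of~\eqref{eq:stoch-NGN}). Mentioning this avoids any worry about dividing by zero and makes the lemma genuinely an \emph{equality for all $x$}, which is how it will be invoked in the subsequent convergence analysis.
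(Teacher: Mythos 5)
Your proof is correct and follows essentially the same route as the paper: clear the denominator in the definition of $\gamma_k$ and rearrange. The edge-case remark about $f_{i_k}(x)=0$ is a sensible addition that the paper omits but does not change the substance of the argument.
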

\begin{proof}
The definition of our stepsize implies
\[
    \left(1+\frac{\sigma}{2f_{i_k}(x)}\|\nabla f_{i_k}(x)\|^2\right)\gamma_k = \sigma ,
\]
which one can rewrite as
\[
    \frac{\sigma}{2f_{i_k}(x)}\|\nabla f_{i_k}(x)\|^2\gamma_k = \sigma -\gamma_k .
\]
This proves the result.
\end{proof}


\begin{lemma}[Fundamental Inequality]
\label{lemma:fundamental_NGN}
Let each $f_i:\R^d\to\R$ be non-negative, $L$-smooth and convex. Considering the NGN stepsize $\gamma_k$ as in~\eqref{eq:stoch-NGN}, we have
\begin{equation}
    \gamma_k^2 \| \nabla f_{i_k}(x^k)\|^2 ~\le~ \left(\frac{4\sigma L}{1+2\sigma L}\right)\gamma_k [f_{i_k}(x^k) - f_{i_k}^*] + \frac{2\sigma^2 L}{1+\sigma L} \cdot\max\left\{0,\frac{2\sigma L-1}{2\sigma L+1}\right\} \cdot f_{i_k}^*.
\end{equation}
For $\sigma\le1/(2L)$ or $f_{i_k}^*=0$ we have no error resulting from the last term above. 
\end{lemma}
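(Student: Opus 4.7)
My plan is to interpolate between two bounds on $\gamma_k^2\|\nabla f_{i_k}(x^k)\|^2$. The first is an exact identity obtained by multiplying the Fundamental Equality of Lemma~\ref{lemma:NGN_lemma} through by $\gamma_k$, yielding $\gamma_k^2\|\nabla f_{i_k}(x^k)\|^2 = 2\gamma_k(\sigma-\gamma_k)f_{i_k}(x^k)/\sigma$. The second is the descent-lemma consequence of $L$-smoothness together with $f_{i_k}^* = \inf f_{i_k} \ge 0$, namely $\|\nabla f_{i_k}(x^k)\|^2 \le 2L[f_{i_k}(x^k) - f_{i_k}^*]$, which after multiplying by $\gamma_k^2$ gives $\gamma_k^2\|\nabla f_{i_k}(x^k)\|^2 \le 2L\gamma_k^2[f_{i_k}(x^k) - f_{i_k}^*]$. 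I will then combine the two with weight $\mu$ on the smoothness bound and $1-\mu$ on the Fundamental Equality, split $f_{i_k}(x^k) = [f_{i_k}(x^k) - f_{i_k}^*] + f_{i_k}^*$ on the latter, and group the resulting expression into the two terms demanded by the statement.

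The key choice is $\mu := \min\{1,\, 2/(1+2\sigma L)\}$, engineered so that $1-\mu = \max\{0,\, (2\sigma L-1)/(2\sigma L+1)\}$ is precisely the scalar the lemma carries in front of $f_{i_k}^*$. With this $\mu$, the coefficient of $[f_{i_k}(x^k) - f_{i_k}^*]$ reads $2\gamma_k\bigl[\mu L\gamma_k + (1-\mu)(\sigma-\gamma_k)/\sigma\bigr]$. A brief expansion shows its $\gamma_k$-independent summand equals $(1-\mu)$ while the $\gamma_k$-linear summand has coefficient $\mu L - (1-\mu)/\sigma = 1/[\sigma(1+2\sigma L)]$; invoking the upper bound $\gamma_k \le \sigma$ from Lemma~\ref{lemma:step_bounds} then simplifies the whole expression to at most $\gamma_k \cdot 4\sigma L/(1+2\sigma L)$. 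The coefficient of $f_{i_k}^*$ equals $2(1-\mu)\gamma_k(\sigma-\gamma_k)/\sigma$, which I bound by applying the two stepsize bounds separately---$\gamma_k \le \sigma$ and $\sigma - \gamma_k \le \sigma^2 L/(1+\sigma L)$ (from $\gamma_k \ge \sigma/(1+\sigma L)$)---yielding $\gamma_k(\sigma-\gamma_k)/\sigma \le \sigma^2 L/(1+\sigma L)$ and hence the claimed $(1-\mu) \cdot 2\sigma^2 L/(1+\sigma L)$.

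The main (minor) obstacle is pinning down the right interpolation weight $\mu$: using either base bound alone gives either a worse constant on $[f_{i_k}(x^k) - f_{i_k}^*]$ (smoothness alone becomes loose once $2L\sigma > 4\sigma L/(1+2\sigma L)$, i.e., $\sigma L > 1/2$) or a non-vanishing $f_{i_k}^*$ coefficient already when $\sigma L \le 1/2$ (Fundamental Equality alone). With the chosen $\mu$, the regime $\mu = 1$ coincides exactly with $\sigma L \le 1/2$, so the Fundamental-Equality contribution drops out entirely and only the smoothness bound remains, matching the $\max\{0,\cdot\}$ in the statement and the parenthetical remark that $\sigma \le 1/(2L)$ (or $f_{i_k}^* = 0$) eliminates the error term.
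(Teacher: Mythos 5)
Your proof is correct and, up to a reparameterization, matches the paper's argument: you combine the Fundamental Equality of Lemma~\ref{lemma:NGN_lemma} (multiplied by $\gamma_k$) with the smoothness inequality $\|\nabla f_{i_k}(x^k)\|^2 \le 2L[f_{i_k}(x^k) - f_{i_k}^*]$ using a convex weight, split $f_{i_k}(x^k)$, and invoke the stepsize range from Lemma~\ref{lemma:step_bounds}. The paper carries a single $\delta = (2\sigma L - 1)/(2\sigma L + 1)$ that is allowed to go negative and then argues separately that the resulting $f_{i_k}^*$ term is $\le 0$ when $\delta < 0$; you instead clip the weight via $1-\mu = \max\{0,\delta\}$, so the Fundamental-Equality contribution vanishes outright when $\sigma L \le 1/2$. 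One small caveat: your identity $\mu L - (1-\mu)/\sigma = 1/[\sigma(1+2\sigma L)]$ and the subsequent simplification to $4\sigma L/(1+2\sigma L)$ are stated as if they hold uniformly, but they are only valid in the regime $\sigma L > 1/2$ where $\mu = 2/(1+2\sigma L)$; in the $\mu = 1$ regime the coefficient of $[f_{i_k}(x^k) - f_{i_k}^*]$ is just $2L\gamma_k^2 \le 2L\sigma\gamma_k$, which needs the one-line check $2L\sigma \le 4\sigma L/(1+2\sigma L)$ (true precisely when $\sigma L \le 1/2$). Adding that sentence closes the only gap; otherwise the argument is sound and essentially the paper's.
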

\begin{proof}
Lemma~\ref{lemma:step_bounds}\&\ref{lemma:NGN_lemma} imply the following relations:
\begin{align*}
    \gamma_k^2 \| \nabla f_{i_k}(x^k)\|^2 & ~\le~ 2L\gamma_k^2 (f_{i_k}(x) - f_{i_k}^*),\\
    \gamma_k^2 \| \nabla f_{i_k}(x^k)\|^2 & ~=~ 2\gamma_k\left(\frac{\sigma - \gamma_k}{\sigma}\right) f_{i_k}(x^k).
\end{align*}
They further imply that for any $\delta\in(-\infty,1]$, we have
\begin{align*}
    (1-\delta)\gamma_k^2 \| \nabla f_{i_k}(x^k)\|^2 & \,\le\, 2 (1-\delta)L\gamma_k^2 (f_{i_k}(x) - f_{i_k}^*),\\
    \delta \gamma_k^2 \| \nabla f_{i_k}(x^k)\|^2 & \,=\, 2\delta\gamma_k\left(\frac{\sigma - \gamma_k}{\sigma}\right) f_{i_k}(x^k).
\end{align*}
Summing the two inequalities above, we get
\begin{equation*}
    \gamma_k^2 \| \nabla f_{i_k}(x^k)\|^2\le 2\delta\gamma_k\left(\frac{\sigma - \gamma_k}{\sigma}\right) f_{i_k}(x^k) +2 (1-\delta)L \gamma_k^2(f_{i_k}(x^k) - f_{i_k}^*),
\end{equation*}
and after collecting a few terms,
\begin{equation}
    \gamma_k^2 \| \nabla f_{i_k}(x^k)\|^2\le 2\gamma_k\underbrace{\left[\delta\left(\frac{\sigma - \gamma_k}{\sigma}\right) + (1-\delta)L\gamma_k\right]}_{A(\delta)} [f_{i_k}(x^k) - f_{i_k}^*] + \underbrace{2\gamma_k\delta\left(\frac{\sigma - \gamma_k}{\sigma}\right)}_{B(\delta)} f_{i_k}^*.
    \label{eq:fundamental_lemma_ineq_A_b}
\end{equation}
We would like to choose $\delta$ to somehow minimizes $A(\delta)$ and $B(\delta)$ simultaneously. It turns out that a convenient choice is (see the remark after the proof)
\begin{equation}\label{eq:delta-choice}
    \delta = \frac{2\sigma L-1}{2\sigma L+1} .
\end{equation}
Note that since $2\sigma L>0$, $\delta$ is a real number in the range $[-1,1]$~(which is a valid subset of $[-\infty,1]$). 
Now consider the first term~$A(\delta)$ in~\eqref{eq:fundamental_lemma_ineq_A_b}. We have
\begin{align*}
    A(\delta)&= \frac{2\sigma L-1}{2\sigma L+1}\left(1-\frac{\gamma_k}{\sigma}\right) + \frac{2 L \gamma_k}{2\sigma L+1}\\
    &= \frac{2\sigma L-1}{2\sigma L+1} - \frac{2\sigma L-1}{2\sigma L+1}\frac{\gamma_k}{\sigma} + \frac{2 L \gamma_k}{2\sigma L+1}\\
    &= \frac{2\sigma L-1}{2\sigma L+1} +\frac{\gamma_k}{(2\sigma L+1)\sigma}\\
    &\le \frac{2\sigma L}{2\sigma L+1},
\end{align*}
where in the last line we used the property $\gamma_k\le\sigma$. The bound~\eqref{eq:fundamental_lemma_ineq_A_b} becomes:
\begin{equation*}
    \gamma_k^2 \| \nabla f_{i_k}(x^k)\|^2\le \left(\frac{4\sigma L}{1+2\sigma L}\right)\gamma_k [f_{i_k}(x^k) - f_{i_k}^*] + 2\gamma_k\left(\frac{2\sigma L-1}{2\sigma L+1}\right)\left(\frac{\sigma - \gamma_k}{\sigma}\right) f_{i_k}^*.
\end{equation*}
To bound $B(\delta)$, i.e., the second term on the right side of the above inequality, we have two cases:
\begin{itemize}
    \item If $\sigma\le1/(2L)$ then $\delta = \frac{2\sigma L-1}{2\sigma L+1}$ is negatkve, and we have
    \begin{equation}
        2\gamma_k\left(\frac{2\sigma L-1}{2\sigma L+1}\right)\left(\frac{\sigma - \gamma_k}{\sigma}\right) f_{i_k}^*\le 0,
    \end{equation}
    since the worst case is $\gamma_k=\sigma$.
    \item Otherwise, $\delta>0$ and we can proceed as follows, using the fact that $\gamma_k \in \left[\frac{\sigma}{1+\sigma L},\sigma\right]$
    \begin{align*}
        2\gamma_k\left(\frac{2\sigma L-1}{2\sigma L+1}\right)\left(\frac{\sigma - \gamma_k}{\sigma}\right) f_{i_k}^*&\le 2\sigma\left(\frac{2\sigma L-1}{2\sigma L+1}\right)\left(\frac{\sigma - \frac{\sigma}{1+\sigma L}}{\sigma}\right) f_{i_k}^*\\ &= 2\sigma \left(\frac{2\sigma L-1}{2\sigma L+1}\right)\left(\frac{\sigma L}{1+\sigma L}\right) f_{i_k}^* \\
        &= \frac{2\sigma^2 L}{1+\sigma L} \left(\frac{2\sigma L-1}{2\sigma L+1}\right)f_{i_k}^* ,
    \end{align*}
\end{itemize}
All in all, considering both cases, we get the following upper bound for the error term $B(\delta)$:
\begin{equation}
    \frac{2\sigma^2 L}{1+\sigma L} \cdot\max\left\{0,\frac{2\sigma L-1}{2\sigma L+1}\right\} \cdot f_{i_k}^*.
\end{equation}
This concludes the proof.
\end{proof}

\begin{remark}[On the choice of $\delta$ in~\eqref{eq:delta-choice}]
In the context of the proof of Lemma~\ref{lemma:fundamental_NGN}, let us assume we want to find $\delta$ such that $A(\delta)\le\alpha$. That implies
\begin{align*}
&\delta\left(\frac{\sigma - \gamma_k}{\sigma}\right) + (1-\delta)L\gamma_k\le\alpha\\
\iff & \delta\sigma +\left[(1-\delta)L\sigma-\delta\right]\gamma_k\le\alpha\sigma\\
\iff&\left[(1-\delta)L\sigma-\delta\right]\gamma_k\le(\alpha-\delta)\sigma\\
\iff&\left[(1-\delta)L\sigma-\delta\right]\le(\alpha-\delta)\frac{\sigma}{\gamma_k}.
\end{align*}
For $\alpha\ge\delta$, the right-hand side is positive. Further, note that $\frac{\sigma}{\gamma_k}\in[1,1+\sigma L]$. So if we like the inequality to hold for every value of $\gamma$ we need~(worst case analysis)
\begin{equation*}
    \left[(1-\delta)L\sigma-\delta\right]\le(\alpha-\delta) \iff (1-\delta)L\sigma\le\alpha \iff \delta\ge1-\frac{\alpha}{L\sigma}.
\end{equation*}
Since $\alpha\ge\delta$ we also need
\begin{equation*}
    \alpha\ge1-\frac{\alpha}{L\sigma}\iff L\sigma\alpha\ge  L\sigma -\alpha\iff\alpha\ge \frac{\sigma L}{1+\sigma L} = 1-\frac{1}{1+\sigma L}.
\end{equation*}
The bound becomes
\begin{equation*}
    \gamma_k^2 \| \nabla f_{i_k}(x^k)\|^2\le 2\alpha\gamma_k [f_{i_k}(x^k) - f_{i_k}^*] + \underbrace{2\gamma_k\delta\left(\frac{\sigma - \gamma_k}{\sigma}\right)}_{B(\delta)} f_{i_k}^*.
\end{equation*}
For the sake of minimizing the first term in the bound, it would make sense to use $\alpha = \frac{\sigma L}{1+\sigma L}$. However, under this condition we get that $\delta\ge \frac{\sigma L}{1+\sigma L}$ as well. This is not ideal since we want $B(\delta)$, the error factor, to vanish for small $\gamma$. To do this, we need to slightly increase the value of $\alpha$ to allow $\delta$ to become negative for small values of $y=L\sigma$. Note that for $\delta$ to be (potentially) negative we need
\begin{equation*}
    1-\frac{\alpha}{\sigma L}\le0\iff \alpha\ge L\sigma.
\end{equation*}
Hence, if we want to keep $\alpha<1$~(needed for proofs), we can only have this condition for $\sigma\le 1/L$ -- i.e. the case where we know convergence holds. To this end, let us consider
\begin{equation*}
    1>\alpha= \frac{2\sigma L}{1+2\sigma L}= \frac{\sigma L}{1/2+\sigma L}\ge \frac{\sigma L}{1+\sigma L}.
\end{equation*}
Using this $\alpha$, we get
\begin{equation*}
    \delta\ge1-\frac{\alpha}{L\sigma} = 1-\frac{2}{1+2\sigma L} = \frac{2\sigma L-1}{2\sigma L+1}.
\end{equation*}
Note that if $\sigma\le\frac{1}{2L}$ then the minimum allowed $\delta$ is negative. \textit{We pick for $\delta$ its minimum allowed value}. 
\end{remark}

\subsection{Convergence rates for fixed regularization}
\label{sec:fixed_sigma}
We provide here convergence guarantees for stochastic NGN method in the convex, strongly convex, and non-convex setting. The results justify the performance observed in practice so far, complemented by deep learning experiments in Section~\ref{sec:experiments}.

Our results are better presented with the concept of interpolation. For a finite sum loss $f =\sum_{i=1}^N f_i$, interpolation implies that all $f_i$s can be minimized at the solution.
\begin{definition}[Interpolation]
   The optimization problem $\min_{x\in\R^d} f(x)=\sum_{i=1}^N f_i(x)$ satisfies interpolation 
if there exist a $x^*$ such that $x^*\in\argmin_{x\in\R^d} f_i(x)$ for all $i\in[N]$.
\end{definition}
Interpolation is an assumption that is often used in the deep learning literature, as it is associated with overparametrization of some modern neural networks, leading to fast convergence of gradient methods~\citep{ma2018power}. We do not assume interpolation\footnote{For instance, large language models do not satisfy this property~\citep{hoffmann2022training}.}, but follow \citet{loizou2021stochastic} to provide convergence rates in relation to the following two error quantities:
\begin{equation}
{\color{purple}\Delta_{\text{int}}:=\mathbf{E}[f_{i}(x^*)-f_{i}^*]},\qquad {\color{violet}\Delta_{\text{pos}}:=\mathbf{E}[f_{i}^*]},
\label{eq:errors}
\end{equation}
where $x^*\in\argmin f(x)$ and $f_i^*:=\inf_x f_i(x)$.
Here the expectation $\E[\cdot]$ is taken with respect to the uniform distribution of the index~$i$, equivalent to the average $(1/N)\sum_{i=1}^N[\cdot]$.
We call $\Delta_{\text{int}}$ the interpolation gap, which is apparently zero under interpolation. The position gap $\Delta_{\text{pos}}$ measures how close is, on average, the minimizer for the current batch to the value zero.
While for overparametrized deep learning models one has $\Delta_{\text{pos}}=\Delta_{\text{int}}=0$~\citep{vaswani2020adaptive}, both the cases $\Delta_{\text{pos}}=0,\Delta_{\text{int}}>0$ and $\Delta_{\text{pos}}>0,\Delta_{\text{int}}=0$ are feasible in theory. 

\begin{remark}
    Since in our setting each $f_i$ is lower-bounded, under the very realistic assumptions $f^*<\infty$ and $f_i^*< \infty$ for all $i\in[N]$, both $\Delta_{\text{pos}}$ and $\Delta_{\text{int}}$ are finite. This assumption, in particular, does not imply finite gradient noise variance.
\end{remark}

\subsubsection{Convex and strongly convex settings}
\label{sec:cvx-sc-rates}

We first recall the definition in the differentiable setting.

\begin{definition}[Strong Convexity / Convexity]
A differentiable function $f : \R^d \rightarrow \R$, is $\mu$-strongly convex, if there exists a constant
$\mu > 0$ such that $\forall x, y \in \R^d$:
\begin{equation}
\label{eq:stronglyconvex}
 f(x) \geq f(y)+ \langle\nabla f(y) , x-y\rangle + \frac{\mu}{2} \|x-y\|^2
\end{equation}
for all $x \in \R^d$. If the inequality holds with $\mu=0$ the function $f$ is convex.
\end{definition}

We now present our results in the convex and strongly convex settings. The nonconvex case, for which we have weaker results, is presented in Section~\ref{sec:nonconvex}.

\begin{tcolorbox}
\begin{theorem}[NGN, convex]
\label{thm:stoc_cvx}
Let $f =\frac{1}{N}\sum_{i=1}^{N} f_i$, where ealh $f_i:\R^d\to\R$ is non-negative, 
$L$-smooth and convex. 
Consider the SGD method~\eqref{eq:sgd} with the NGN stepsize~\eqref{eq:stoch-NGN}.
For any value of $\sigma>0$, we have
\begin{align}
    \mathbf{E}\left[f(\bar x^K)-f(x^*)\right] &\le {\color{olive}\frac{\mathbf{E}\|x^0-x^*\|^2}{\eta_{\sigma} K}} + {\color{purple}3\sigma L\cdot (1+\sigma L) \Delta_{\text{int}}} + {\color{violet}\sigma L \cdot\max\left\{0,2\sigma L-1\right\} \Delta_{\text{pos}}},
\end{align}
where $\bar x^K =\frac{1}{K}\sum_{k=0}^{K-1}x^k$ and $\eta_{\sigma} =\frac{2\sigma}{(1+2\sigma L)^2}$. Decreasing $\sigma$ like $O(1/\sqrt{K})$, we get
an $O\left(\frac{\ln(K)}{\sqrt{K}}\right)$ rate. 
\end{theorem}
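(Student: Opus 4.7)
The plan is to combine the standard SGD potential argument with the two NGN-specific lemmas (Lemma~\ref{lemma:NGN_lemma} and Lemma~\ref{lemma:fundamental_NGN}) already established, taming the random, gradient-dependent stepsize $\gamma_k$ by reducing every occurrence of it to a product with a sign-definite quantity before taking expectation.

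I would start from the exact iterate recursion
\[
\|x^{k+1}-x^*\|^2 = \|x^k-x^*\|^2 - 2\gamma_k\langle\nabla f_{i_k}(x^k),x^k-x^*\rangle + \gamma_k^2\|\nabla f_{i_k}(x^k)\|^2.
\]
Convexity of $f_{i_k}$ bounds the inner-product term: $\langle\nabla f_{i_k}(x^k),x^k-x^*\rangle \ge f_{i_k}(x^k)-f_{i_k}(x^*)$, and since $\gamma_k>0$ this yields $-2\gamma_k\langle\nabla f_{i_k}(x^k),x^k-x^*\rangle \le -2\gamma_k[f_{i_k}(x^k)-f_{i_k}(x^*)]$. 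For the squared-gradient term I would invoke Lemma~\ref{lemma:fundamental_NGN} with $\alpha := 2\sigma L/(1+2\sigma L)$, giving $\gamma_k^2\|\nabla f_{i_k}(x^k)\|^2 \le 2\alpha\gamma_k[f_{i_k}(x^k)-f_{i_k}^*] + E_k$, where $E_k$ collects the $f_{i_k}^*$-dependent error from that lemma.

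The key algebraic move is the decomposition $f_{i_k}(x^k)-f_{i_k}(x^*)=[f_{i_k}(x^k)-f_{i_k}^*]-[f_{i_k}(x^*)-f_{i_k}^*]$. Substituting and collecting terms leaves
\[
\|x^{k+1}-x^*\|^2 \le \|x^k-x^*\|^2 -2(1-\alpha)\gamma_k[f_{i_k}(x^k)-f_{i_k}^*]+2\gamma_k[f_{i_k}(x^*)-f_{i_k}^*]+E_k,
\]
in which $[f_{i_k}(x^k)-f_{i_k}^*]$ and $[f_{i_k}(x^*)-f_{i_k}^*]$ are nonnegative by definition of $f_{i_k}^*$. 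Lemma~\ref{lemma:step_bounds} can therefore be applied one-sidedly: replace $\gamma_k$ by its deterministic lower bound in the first (negative) term and by $\sigma$ in the second (positive) term. Taking conditional expectation over $i_k$ and using $\mathbf{E}[f_{i_k}(x^k)-f_{i_k}^*]=(f(x^k)-f(x^*))+\Delta_{\text{int}}$, $\mathbf{E}[f_{i_k}(x^*)-f_{i_k}^*]=\Delta_{\text{int}}$, $\mathbf{E}[f_{i_k}^*]=\Delta_{\text{pos}}$, I obtain a descent-lemma of the form $\mathbf{E}_k\|x^{k+1}-x^*\|^2 \le \|x^k-x^*\|^2 - \eta_\sigma[f(x^k)-f(x^*)] + C_1\Delta_{\text{int}} + C_2\Delta_{\text{pos}}$. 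Telescoping, dividing by $\eta_\sigma K$, and applying Jensen's inequality $f(\bar x^K)\le \frac{1}{K}\sum_k f(x^k)$ then delivers the stated rate; tracking the constants yields the precise coefficients $3\sigma L(1+\sigma L)$ and $\sigma L\max\{0,2\sigma L-1\}$.

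The main obstacle is exactly the step that eliminates the $i_k$-dependence of $\gamma_k$: because $\gamma_k$ sits inside the update, one cannot naively commute expectation with stepsize. The decomposition around $f_{i_k}^*$ is what produces only sign-definite factors multiplying $\gamma_k$, and only then do the two-sided bounds of Lemma~\ref{lemma:step_bounds} close the argument. The strict inequality $\alpha<1$ baked into Lemma~\ref{lemma:fundamental_NGN} is essential, since it keeps the coefficient of the progress term strictly positive for \emph{every} $\sigma>0$—the source of the never-diverging behaviour advertised in Section~\ref{sec:properties}. For the annealed-$\sigma$ rate, one replaces $\sigma$ by $\sigma_k$, inherits a descent inequality with $k$-dependent constants, and balances the $1/(\eta_{\sigma_k}K)$ term (which grows like $1/\sigma_k$) against the noise-floor terms (scaling linearly in $\sigma_k$); choosing $\sigma_k\sim 1/\sqrt{k}$ makes the two sides match and generates the logarithmic factor, as formally carried out in Section~\ref{sec:proofs_annealed_sigma}.
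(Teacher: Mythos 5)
Your proposal is correct and, after unwinding the algebra, it is in fact equivalent to the paper's proof, just packaged differently. The paper writes $\gamma_k = \rho + \epsilon_k$ (with deterministic $\rho = \frac{\sigma}{(1+2\sigma L)(1+\sigma L)}$ and stochastic $\epsilon_k\ge 0$), applies the SPS split $f_{i_k}(x^k)-f_{i_k}(x^*)=[f_{i_k}(x^k)-f_{i_k}^*]-[f_{i_k}(x^*)-f_{i_k}^*]$ only to the $\epsilon_k$-portion, and then drops the non-positive term $-2\bigl((1-\alpha)\gamma_k-\rho\bigr)[f_{i_k}(x^k)-f_{i_k}^*]$. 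You instead apply the SPS split to the whole $-2\gamma_k[\cdot]$ term, collect coefficients to get $-2(1-\alpha)\gamma_k[f_{i_k}(x^k)-f_{i_k}^*]$, and then lower-bound $\gamma_k$ by $\frac{\sigma}{1+\sigma L}$ there and upper-bound it by $\sigma$ in the positive bias. The quantity you discard when you replace $\gamma_k$ by its lower bound is precisely the same $-2\bigl((1-\alpha)\gamma_k - \rho\bigr)[f_{i_k}(x^k)-f_{i_k}^*]$ the paper drops, and the net $\Delta_{\text{int}}$ coefficient is $2\sigma-\eta_\sigma = 2(\sigma-\rho)$ in both cases.

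One thing worth flagging: the paper's text, right after invoking Lemma~\ref{lemma:fundamental_NGN}, explicitly dismisses the plain SPS split on the grounds that "it introduces a bias term $\mathbf{E}[\gamma_k(f_{i_k}(x^*)-f_{i_k}^*)]\le\sigma\Delta_{\text{int}}=O(\sigma)$," and presents the $\rho+\epsilon_k$ decomposition as the novelty that brings the error down to $O(\sigma^2)$. Your write-up makes transparent that the plain SPS split \emph{already} achieves the $O(\sigma^2)$ behaviour, provided (i) Lemma~\ref{lemma:fundamental_NGN} is applied first so the progress coefficient is $(1-\alpha)\gamma_k$ with $1-\alpha=\tfrac{1}{1+2\sigma L}>0$ for all $\sigma$, and (ii) the $-\eta_\sigma\Delta_{\text{int}}$ credit that falls out of $\mathbf{E}_k[f_{i_k}(x^k)-f_{i_k}^*]=(f(x^k)-f(x^*))+\Delta_{\text{int}}$ is kept, so the raw $+2\sigma\Delta_{\text{int}}$ bias cancels down to $2\sigma-\eta_\sigma=O(\sigma^2)$. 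This is arguably a cleaner route to the same constants. A minor cosmetic point: your computation actually yields a slightly tighter $\Delta_{\text{int}}$ coefficient $\sigma L(3+2\sigma L)$ rather than the stated $3\sigma L(1+\sigma L)$, and a larger $\eta_\sigma=\frac{2\sigma}{(1+2\sigma L)(1+\sigma L)}$ rather than the stated $\frac{2\sigma}{(1+2\sigma L)^2}$; both differences make your bound strictly stronger, so the theorem as stated follows, but the phrase "tracking the constants yields the precise coefficients" is slightly off.
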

\end{tcolorbox}

We note that the NGN stepsize~\eqref{eq:stoch-NGN} does not require knowledge of the Lipschitz constant~$L$. 

\begin{tcolorbox}
\begin{theorem}[NGN, strongly convex]
\label{thm:stoc_strong_cvx}
Let $f =\frac{1}{N}\sum_{i=1}^{N} f_i$, where each $f_i:\R^d\to\R$ is non-negative, $L$-smooth and convex. Additionally, assume $f$ is $\mu$-strongly convex. 
Consider the SGD method~\eqref{eq:sgd} with the NGN stepsize~\eqref{eq:stoch-NGN}.
For any value of $\sigma>0$, we have
$$\mathbf{E}\|x^{k+1}-x^*\|^2 \le   {\color{olive}(1-\mu\rho)^k\mathbf{E}\|x^0-x^*\|^2}+ {\color{purple}\frac{6 L}{\mu} \sigma(1+\sigma L)\Delta_{\text{int}}} +{\color{violet}\frac{2\sigma L}{\mu} \max\left\{0,2\sigma L-1\right\} \Delta_{\text{pos}}},
$$
where $\rho =\frac{\sigma}{(1+2\sigma L)(1+\sigma L)}$. Decreasing $\sigma$ like $O(1/K)$, we get
an $O\left(\frac{\ln(K)}{K}\right)$ rate. 
\end{theorem}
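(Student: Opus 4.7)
The plan is to mirror the convex-case analysis (Theorem~\ref{thm:stoc_cvx}) while extracting an additional contraction on the iterate distance from strong convexity of $f$. Expanding the SGD update identity gives
\[
\|x^{k+1}-x^*\|^2 = \|x^k-x^*\|^2 - 2\gamma_k \langle \nabla f_{i_k}(x^k), x^k-x^*\rangle + \gamma_k^2 \|\nabla f_{i_k}(x^k)\|^2.
\]
I would lower bound the inner product by $f_{i_k}(x^k)-f_{i_k}(x^*)$ using convexity of $f_{i_k}$, and split this as $[f_{i_k}(x^k)-f_{i_k}^*]-[f_{i_k}(x^*)-f_{i_k}^*]$ to align with the output of Lemma~\ref{lemma:fundamental_NGN}. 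That lemma is then applied to $\gamma_k^2\|\nabla f_{i_k}(x^k)\|^2$, producing a term proportional to $\gamma_k[f_{i_k}(x^k)-f_{i_k}^*]$ plus an error linear in $f_{i_k}^*$.

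The decisive algebraic step is collecting the coefficient of $\gamma_k[f_{i_k}(x^k)-f_{i_k}^*]$: the contributions $-2$ from convexity and $+\tfrac{4\sigma L}{1+2\sigma L}$ from the fundamental inequality collapse to $-\tfrac{2}{1+2\sigma L}$. Since $f_{i_k}(x^k)\geq f_{i_k}^*$, I can then substitute the deterministic lower bound $\gamma_k\geq \sigma/(1+\sigma L)$ from Lemma~\ref{lemma:step_bounds}, turning this into $-2\rho\,[f_{i_k}(x^k)-f_{i_k}^*]$ with $\rho = \sigma/[(1+\sigma L)(1+2\sigma L)]$. The leftover non-negative term $2\gamma_k[f_{i_k}(x^*)-f_{i_k}^*]$ is bounded using $\gamma_k\leq\sigma$.

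Taking conditional expectation, $\E[f_{i_k}(x^k)-f_{i_k}^*]=f(x^k)-\Delta_{\text{pos}}$ and $\E[f_{i_k}(x^*)-f_{i_k}^*]=\Delta_{\text{int}}$. Rewriting $f(x^k)-\Delta_{\text{pos}}=[f(x^k)-f(x^*)]+\Delta_{\text{int}}$ and invoking strong convexity $f(x^k)-f(x^*)\geq\tfrac{\mu}{2}\|x^k-x^*\|^2$ yields the linear recursion
\[
\E\|x^{k+1}-x^*\|^2 \leq (1-\mu\rho)\,\E\|x^k-x^*\|^2 + 2(\sigma-\rho)\Delta_{\text{int}} + \frac{2\sigma^2 L}{1+\sigma L}\max\!\Bigl\{0,\tfrac{2\sigma L-1}{2\sigma L+1}\Bigr\}\Delta_{\text{pos}}.
\]
Unrolling and summing $\sum_{j\geq 0}(1-\mu\rho)^j = 1/(\mu\rho)$ gives the stated neighborhood; the coefficient of $\Delta_{\text{int}}$ simplifies via the identity $(1+\sigma L)(1+2\sigma L)-1 = \sigma L(3+2\sigma L)$, yielding $2(\sigma-\rho)/(\mu\rho) = 2\sigma L(3+2\sigma L)/\mu \leq 6\sigma L(1+\sigma L)/\mu$, while the $\Delta_{\text{pos}}$ coefficient reduces to $\tfrac{2\sigma L}{\mu}\max\{0,2\sigma L-1\}$ after the $(2\sigma L+1)$ factor cancels. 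The main obstacle is precisely this coefficient bookkeeping---ensuring that after applying Lemma~\ref{lemma:fundamental_NGN} the gradient-squared term does not cancel the convexity descent, which is why the specific $\delta$-tuning inside that lemma is essential. The $O(\ln K/K)$ rate under $\sigma=\Theta(1/K)$ follows by applying this recursion along an annealing schedule and is deferred to the analysis in Section~\ref{sec:proofs_annealed_sigma}.
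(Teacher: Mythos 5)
Your proof is correct and in fact reaches a marginally tighter per-iteration error bound than the paper does. The route is related to, but organized differently from, the paper's proof: the paper frames its argument around an explicit decomposition $\gamma_k=\rho+\epsilon_k$, applies convexity of $f_{i_k}$ only to the $\epsilon_k$-weighted inner product, keeps $-2\rho\langle x^k-x^*,\nabla f_{i_k}(x^k)\rangle$ intact until after conditioning, and then uses the gradient-inner-product form of strong convexity to extract the $(1-\mu\rho)$ contraction, dropping the nonnegative $-2\rho[f(x^k)-f(x^*)]$ that remains. You instead apply convexity of $f_{i_k}$ to the whole inner product first, collect the $-\tfrac{2\gamma_k}{1+2\sigma L}$ coefficient on $[f_{i_k}(x^k)-f_{i_k}^*]$ after invoking Lemma~\ref{lemma:fundamental_NGN}, and replace $\gamma_k$ by its deterministic lower bound $\sigma/(1+\sigma L)$ from Lemma~\ref{lemma:step_bounds}; this is algebraically the same move the paper performs when verifying $\epsilon_k-\tfrac{2\sigma L\gamma_k}{1+2\sigma L}\geq 0$, but without introducing $\epsilon_k$ as a separate object. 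The additional ingredients you need, and correctly supply, are the identity $f(x^*)=\Delta_{\text{int}}+\Delta_{\text{pos}}$ to turn $\E_k[f_{i_k}(x^k)-f_{i_k}^*]=f(x^k)-\Delta_{\text{pos}}$ back into $[f(x^k)-f(x^*)]+\Delta_{\text{int}}$, and the quadratic-growth form $f(x^k)-f(x^*)\geq\tfrac{\mu}{2}\|x^k-x^*\|^2$ in place of the paper's gradient form. Your streamlined version buys a slightly cleaner $\Delta_{\text{int}}$ coefficient, $2\sigma L(3+2\sigma L)/\mu\leq 6\sigma L(1+\sigma L)/\mu$, at the modest cost of having to spot the $f(x^*)=\Delta_{\text{int}}+\Delta_{\text{pos}}$ relation; both arguments hinge on exactly the same two lemmas and the same deterministic lower bound on $\gamma_k$.
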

\end{tcolorbox}

The proofs of the main results in Thm.~\ref{thm:stoc_cvx} and Thm.~\ref{thm:stoc_strong_cvx} are presented in Section~\ref{sec:convex-proof} and the analysis on decreasing~$\sigma$ is given in Section~\ref{sec:proofs_annealed_sigma}. 
Here, we make a few comments on these results. 

\paragraph{Technical novelty.} Our analysis involves a novel expansion for the expectation of the product of the stochastic NGN stepsize $\gamma_k$ with the stochastic suboptimality value $f_{i_k}(x^k)-f_{i_k}(x^*)$. The correlation between these quantities poses well-known challenges in the analysis of adaptive methods~(see e.g. discussion in~\citet{ward2020adagrad}). Our approach involves decomposing the stepsize as $\gamma_k = \rho+\epsilon_k$, where $\rho = O(\sigma)$ is deterministic and $\epsilon_k = O(\sigma^2)$ is a non-negative stochastic offset. This decomposition allows us to fully benefit from NGN adaptivity in the proof without compromising the bound tightness and the suboptimality level reached at convergence. 

\paragraph{Comment on the rate.} For \textit{any} fixed $\sigma$, the rate is sublinear~(linear in the strongly convex case) to a bounded neighborhood of the solution. The neighborhood magnitude is $O(\sigma)$ -- shrinks as $\sigma$ converges to zero -- and depends on the values of $\Delta_{\text{int}}$ and $\Delta_{\text{pos}}$. Let us elaborate further: 
\begin{itemize}\itemsep 0pt
    \item If $\Delta_{\text{int}}=\Delta_{\text{pos}}=0$, then the results guarantees sublinear~(linear in the strongly convex case) convergence in expectation to the solution for any value of $\sigma$. The best constant in the rate in this setting is achieved when knowing the Lipschitz constant, indeed $\min_{\sigma}\eta_\sigma$ is achieved at $\sigma = 1/(2L)$. 
    
    \item If $\Delta_{\text{pos}}>0$, then the error term $\sigma L \cdot\max\left\{0,2\sigma L-1\right\} \Delta_{\text{pos}}$~(in the convex setting) is non-vanishing\footnote{The threshold $1/(2L)$ is also common in plain SGD, see, e.g., \citet[Theorem 6.8]{garrigos2023handbook}.} for $\sigma>1/(2L)$ -- this does not come from gradient stochasticity but is instead an effect of correcting divergence of the gradient update for large stepsizes. We note that in this case our method behaves more favorably than SGD, which is divergent for large stepsizes. 
    \item If $\Delta_{\text{int}}>0$, then the error term $3\sigma L(1+\sigma L) \Delta_{\text{int}}$~(in the convex setting) is the result of gradient stochasticity around the solution.
    \item By annealing $\sigma$---which does not necessarily yield a monotonically decreasing stepsize $\gamma_k$~(see Imagenet experiment in Figure~\ref{fig:warmup_decay}) -- we get convergence to the solution without knowledge of the gradient Lipshitz constant $L$~(Thm.~\ref{thm:stoc_cvx_dec}).
\end{itemize}

\vspace{-2mm}
\paragraph{Comparison with SPS.} The errors in Thm.~\ref{thm:stoc_cvx} and Thm.~\ref{thm:stoc_strong_cvx} are $O(\sigma)$, meaning that as $\sigma\to0$ the error vanishes and we converge to the solution. This is not the case for recent adaptations of the Polyak scheme to the stochastic setting~\citep{loizou2021stochastic}, where the error is $O(1)$~(Thm.~\ref{thm:loizou}).
\begin{theorem}[Main Theorem of~\citet{loizou2021stochastic}]
   \label{thm:loizou}
Let each $f_i$ be $L_i$-smooth and convex. Denote $L=\max \{L_i\}_{i=1}^n$ the maximum smoothness constant. Consider SGD with 
the SPS$_{\max}$ learning rate $\gamma_k = \min \left\{ \frac{f_{i_k}(x^k)-f_{i_k}^*}{c\|\nabla f_{i_k}(x^k)\|^2}, \gamma_b \right\}$. If $c=1$,  
$\E \left[f(\bar{x}^K)-f(x^*)\right] 
\leq \frac{\|x^0-x^*\|^2}{\alpha \, K} + \frac{2\gamma_{b}\Delta_{\text{int}}}{\alpha},$
where $\alpha=\min \left\{\frac{1}{2cL},\gamma_{b}\right\}$ and $\bar{x}^K=\frac{1}{K}\sum_{k=0}^{K-1} x^k$. If in addition $f$ is $\mu$-strongly convex, then, for any $c\geq1/2$, SGD with SPS$_{\max}$ converges as:
$\E \|x^{k}-x^*\|^2 
\leq \left(1-\mu \alpha \right)^k  \|x^0-x^*\|^2 + \frac{2\gamma_{b} \Delta_{\text{int}} }{\mu \alpha}$.
\end{theorem}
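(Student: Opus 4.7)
The plan is to follow the standard SGD distance-decrease template, starting from the identity
\begin{equation*}
\|x^{k+1} - x^*\|^2 = \|x^k - x^*\|^2 - 2\gamma_k\langle\nabla f_{i_k}(x^k),\, x^k - x^*\rangle + \gamma_k^2\|\nabla f_{i_k}(x^k)\|^2.
\end{equation*}
I would bound the inner-product term using convexity of $f_{i_k}$, $\langle\nabla f_{i_k}(x^k),\, x^k - x^*\rangle \ge f_{i_k}(x^k) - f_{i_k}(x^*)$, and control the squared-gradient term via the defining relation of SPS$_{\max}$: since $\gamma_k \le (f_{i_k}(x^k) - f_{i_k}^*)/(c\|\nabla f_{i_k}(x^k)\|^2)$, one has $\gamma_k^2\|\nabla f_{i_k}(x^k)\|^2 \le \gamma_k[f_{i_k}(x^k) - f_{i_k}^*]/c$.

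The crucial algebraic step is the decomposition $f_{i_k}(x^k) - f_{i_k}(x^*) = [f_{i_k}(x^k) - f_{i_k}^*] - [f_{i_k}(x^*) - f_{i_k}^*]$, in which both bracketed quantities are non-negative. Substituting and collecting like terms produces
\begin{equation*}
\|x^{k+1} - x^*\|^2 \le \|x^k - x^*\|^2 - (2 - 1/c)\,\gamma_k\,[f_{i_k}(x^k) - f_{i_k}^*] + 2\gamma_k\,[f_{i_k}(x^*) - f_{i_k}^*],
\end{equation*}
which has the desired sign structure precisely when $c \ge 1/2$. To decouple $\gamma_k$ from the stochastic factors before taking expectation, I would invoke the deterministic sandwich $\alpha \le \gamma_k \le \gamma_b$, where $\alpha = \min\{1/(2cL),\,\gamma_b\}$ follows from $L$-smoothness of each $f_i$: $\|\nabla f_{i_k}(x^k)\|^2 \le 2L[f_{i_k}(x^k) - f_{i_k}^*]$, hence the SPS candidate is at least $1/(2cL)$. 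Applying $\gamma_k \ge \alpha$ on the non-positive progress term and $\gamma_k \le \gamma_b$ on the non-negative error term, then taking conditional expectation and using $\E[f_{i_k}(x^k) - f_{i_k}^* \mid x^k] = f(x^k) - f(x^*) + \Delta_{\text{int}}$ and $\E[f_{i_k}(x^*) - f_{i_k}^*] = \Delta_{\text{int}}$, yields a one-step recursion of the form
\begin{equation*}
\E\|x^{k+1} - x^*\|^2 \le \E\|x^k - x^*\|^2 - (2 - 1/c)\,\alpha\,[f(x^k) - f(x^*)] + 2\gamma_b\,\Delta_{\text{int}}.
\end{equation*}

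For the convex statement, specializing to $c = 1$ (so $2 - 1/c = 1$), I would rearrange to isolate $\alpha[f(x^k) - f(x^*)]$, telescope from $k = 0$ to $K - 1$, divide by $\alpha K$, and apply Jensen's inequality to the averaged iterate $\bar x^K$, which directly delivers the claimed bound. For the strongly convex statement, I would substitute the strong-convexity lower bound $f(x^k) - f(x^*) \ge (\mu/2)\|x^k - x^*\|^2$ into the one-step recursion to obtain a linear contraction $\E\|x^{k+1} - x^*\|^2 \le (1 - \mu\alpha)\,\E\|x^k - x^*\|^2 + 2\gamma_b\,\Delta_{\text{int}}$ (absorbing the $c$-dependent constants using $c \ge 1/2$), then iterate the recursion and sum the resulting geometric series of error terms. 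The main obstacle — and the reason the result does not reduce to a direct application of the fixed-stepsize SGD template — is the correlation between the random stepsize $\gamma_k$ and the stochastic quantities $\nabla f_{i_k}(x^k)$ and $f_{i_k}(x^k)$; the non-negativity decomposition above together with the deterministic two-sided bound $\alpha \le \gamma_k \le \gamma_b$ is precisely what allows the expectation step to go through without any independence assumption.
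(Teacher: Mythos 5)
This theorem is not proved in the paper: it is quoted from \citet{loizou2021stochastic} for comparison, and the paper only alludes to its key trick (the split $f_{i_k}(x^k)-f_{i_k}(x^*)=[f_{i_k}(x^k)-f_{i_k}^*]-[f_{i_k}(x^*)-f_{i_k}^*]$ combined with the deterministic sandwich $\alpha\le\gamma_k\le\gamma_b$) inside the proof of Theorem~\ref{thm:stoc_cvx}. Judged on its own merits, your convex half is correct and is exactly that argument: the SPS relation $\gamma_k^2\|\nabla f_{i_k}(x^k)\|^2\le\gamma_k[f_{i_k}(x^k)-f_{i_k}^*]/c$, the non-negative decomposition, $\gamma_k\ge\alpha$ on the progress term and $\gamma_k\le\gamma_b$ on the error term, expectation, telescoping and Jensen give precisely the stated bound for $c=1$ (your lower bound $\gamma_k\ge\min\{1/(2cL),\gamma_b\}$ via $\|\nabla f_{i_k}(x^k)\|^2\le 2L[f_{i_k}(x^k)-f_{i_k}^*]$ is also right).

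The strongly convex half has a genuine gap. From your one-step recursion, inserting $f(x^k)-f(x^*)\ge\frac{\mu}{2}\|x^k-x^*\|^2$ yields the contraction factor $1-(2-1/c)\frac{\mu\alpha}{2}$, which is $1-\frac{\mu\alpha}{2}$ at $c=1$ and degenerates to $1$ as $c\to 1/2$ (with the corresponding error constant blowing up); no amount of ``absorbing the $c$-dependent constants using $c\ge 1/2$'' can produce the claimed $(1-\mu\alpha)^k$ for all $c\ge 1/2$, because the progress coefficient $2-1/c$ vanishes exactly at $c=1/2$. The stated rate needs different bookkeeping: use $c\ge 1/2$ only to bound $\gamma_k^2\|\nabla f_{i_k}(x^k)\|^2\le 2\gamma_k[f_{i_k}(x^k)-f_{i_k}^*]$, and do \emph{not} apply plain convexity to the inner product; instead group the terms as $-2\gamma_k\bigl[\langle\nabla f_{i_k}(x^k),x^k-x^*\rangle-f_{i_k}(x^k)+f_{i_k}(x^*)\bigr]+2\gamma_k\bigl[f_{i_k}(x^*)-f_{i_k}^*\bigr]$. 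The first bracket is non-negative by convexity of $f_{i_k}$, so you may replace $\gamma_k$ by $\alpha$ there (and by $\gamma_b$ in the second term) \emph{before} taking expectation; then apply $\mu$-strong convexity of $f$ to the expected bracket, $\langle\nabla f(x^k),x^k-x^*\rangle-f(x^k)+f(x^*)\ge\frac{\mu}{2}\|x^k-x^*\|^2$, which retains the full $\mu/2$ quadratic term. This gives $\E_k\|x^{k+1}-x^*\|^2\le(1-\mu\alpha)\|x^k-x^*\|^2+2\gamma_b\Delta_{\text{int}}$, and unrolling the recursion yields the stated $(1-\mu\alpha)^k\|x^0-x^*\|^2+\frac{2\gamma_b\Delta_{\text{int}}}{\mu\alpha}$.
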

It is easy to realize that indeed, as thoroughly explained by~\citet{orvieto2022dynamics}, the error terms $\frac{2\gamma_{b}\Delta_{\text{int}}}{\alpha}$ and $\frac{2\gamma_{b} \Delta_{\text{int}} }{\mu \alpha}$ in the convex and strongly convex setting are $O(1)$ with respect to the hyperparameters. This is referred to as the bias problem in Section 4 of~\citet{orvieto2022dynamics}. While \citet{orvieto2022vanishing} presented a variation on SPS~(DecSPS) to solve this issue, their proof is based on a classical Adagrad-like argument and additionally needs strong convexity or bounded iterates to guarantee sublinear convergence in the convex domain. Instead, Thm.~\ref{thm:stoc_cvx} requires no assumptions other than convexity and smoothness. Further, in Thm.~\ref{thm:stoc_strong_cvx}, we are able to prove linear convergence to a $O(\sigma)$ neighborhood, a result not yet derived with DecSPS machinery.

\begin{figure}[ht]
    \centering
\includegraphics[width=0.85\textwidth]
{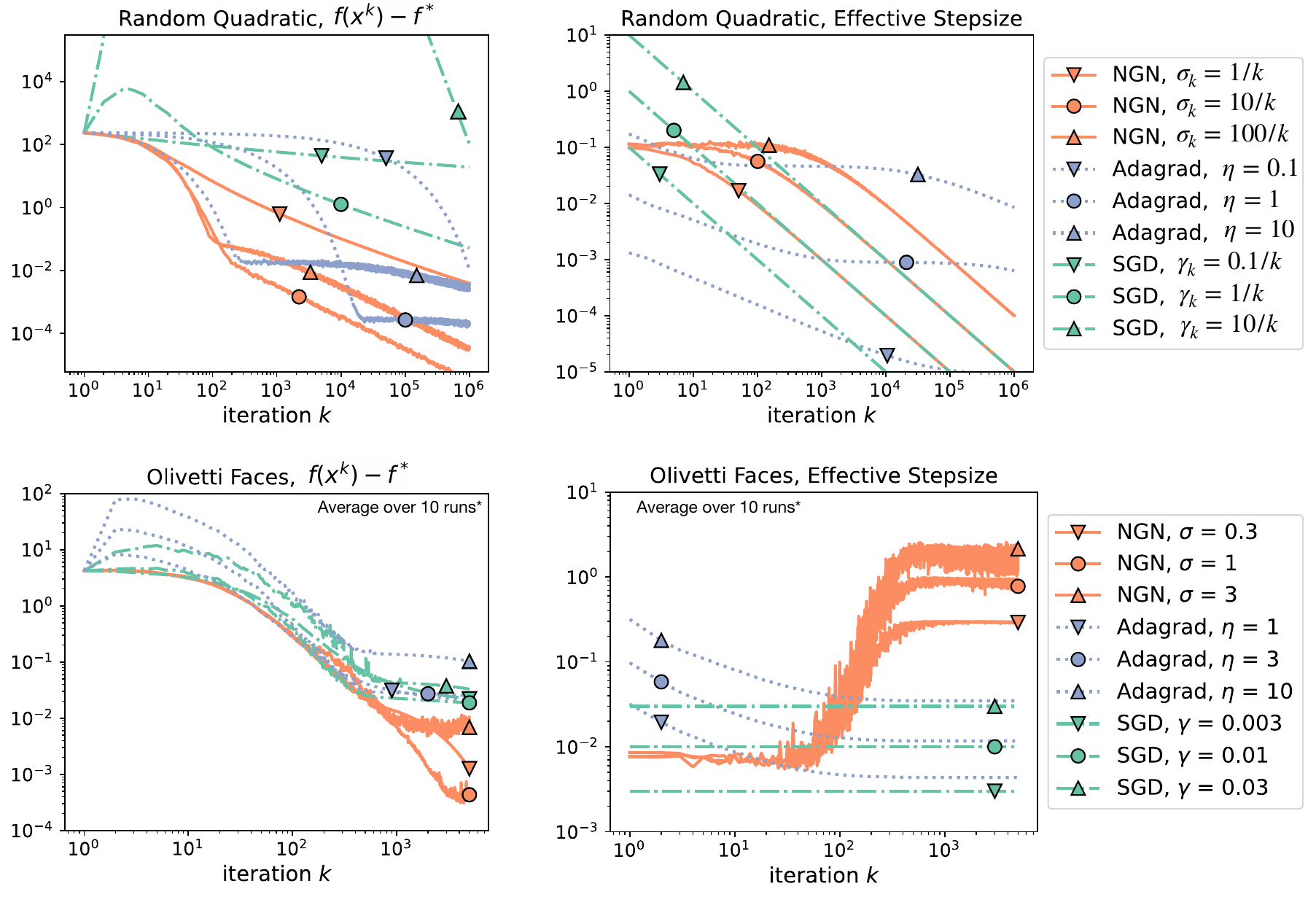}
\vspace{-3mm}
    \caption{Comparison of NGN with Adagrad-norm and SGD for two convex problems, under a decreasing stepsize and a constant stepsize. A comment is provided below.}
\vspace{-3mm}
    \label{fig:quadratic_ada}
\end{figure}

\vspace{-2mm}
\paragraph{Comparison with Adagrad.} Adagrad enjoys state-of-the-art adaptive convergence guarantees in the convex and non-convex settings~\citep{liu2023convergence, faw2022power, wang2023convergence, faw2023beyond}. Knowledge of the gradient Lipschitz constant or additional strong assumptions are not needed for convergence to the problem solution. However, as opposed to NGN, Adagrad's proof techniques do not transfer well to the strongly convex setting~(see instead our Thm.~\ref{thm:stoc_strong_cvx}). In addition, Adagrad is observed to perform suboptimally compared to well-tuned SGD on some problems~\citep{kingma2014adam}. This is mainly due to the fact that the resulting $\gamma_k$ sequence is \textit{forced to be decreasing} by design. A feature that is not present in more commonly used optimizers such as Adam~\citep{kingma2014adam}.

We evaluate the performance of tuned SGD, NGN, and Adagrad-norm in two distinct settings: 
\begin{itemize}\itemsep 0pt
    \item Linear regression on Gaussian input data with random predictor~($d=512$ features, $N=2048$ datapoints: condition number is $\sim 8.5$), optimized with a relatively big batch size of $64$. Here, $\Delta_{\text{pos}}=0$ but $\Delta_{\text{int}}>0$, and we choose decreasing stepsizes for SGD and NGN like $1/k$, to ensure convergence to the unique solution~(see Thm.~\ref{thm:stoc_strong_cvx}). In most recent results \citep[see, e.g.,][]{faw2022power}, Adagrad is not analyzed with a decreasing stepsize $\eta$, so we also do not implement it here.
    \item The second problem is cross-entropy minimization for a linear model on the Olivetti Faces dataset~\citep{pedregosa2011scikit}, where $d=4096$, $N=320$ and we have $40$ distinct classes. Since here $N\ll d$, we include L2 regularization with strength $1e-3$. We optimize with a batch-size of $4$. Realistically, $\Delta_{\text{pos}}, \Delta_{\text{int}}\ll 1$. We, therefore, consider constant hyperparameters in all our optimizers.
\end{itemize}
We report our results in Fig.~\ref{fig:quadratic_ada}. For SGD and NGN, we tuned their single hyperparameter independently to the best performance. For Adagrad-norm, $\gamma_k = \eta/\sqrt{b_0^2+\sum_{j=0}^k \|\nabla f_{i_j}(x_j)\|^2}$ and we only carefully tuned $\eta$ since results are not too sensitive to $b_0$, which we set to $1e-2$. The best-performing hyperparameter for each method is marked with \cc. We also show performance for hyperparameters 3-times-lower~(\td) and 3-times-higher~(\tu). The results clearly indicate, in both settings, that NGN has an edge over Adagrad-norm:
\begin{itemize}
    \item On Olivetti Faces,  where we optimize with constant hyperparameters, we indeed see that Adagrad, even after tuning, does not provide a clear advantage over SGD. Instead, for NGN, performance is remarkably better than SGD and Adagrad for all hyperparameter values. The strong gradients experienced at the first iteration cause the NGN stepsize to automatically warmup, reaching then a $\sigma$-proportional value when gradients become small.
    \item On the linear regression example, the behavior is particularly interesting: NGN prevents the learning rate from overshooting -- unlocking fast progress even at large $\sigma$. Instead, SGD for a big enough learning rate overshoots and only catches up at the end of training. As expected by the result of~\citet{yang2024two}, Adagrad does not overshoot, but its progress slows down as soon as gradients start to vanish~(therefore $\gamma_k$ hardly decreases). Towards the end of training, NGN optimizes with a stepsize $\gamma_k\simeq \sigma_k$, effectively converging to the SGD rule.
\end{itemize}

\begin{remark}[NGN can automatically warmup, but looks like decrease schedule is hand-picked.] While this is indeed the case for the examples seen so far, this property depends on the features of the loss landscape. We show that in deep learning experiments~(Fig.~\ref{fig:warmup_decay}) for a constant $\sigma$ the induced $\gamma_k$ has a warmup-decay behaviour.
    
\end{remark}

\subsubsection{Proofs for main results}
\label{sec:convex-proof}
We present here the proofs for the convex and strongly convex setting.
\label{sec:proofs}
\begin{proof}[\textbf{Proof of Thm.~\ref{thm:stoc_cvx}}]
    By assumption, each $f_{i}$ is convex and $L$-smooth. Expanding the squared distance and using convexity, we get:
\begin{align}
\|x^{k+1}-x^*\|^2 
&=\|(x^{k+1}-x^k) +(x^k-x^*)\|^2 \nonumber \\
&=\|x^k-x^*\|^2+2 \langle x^k-x^*, x^{k+1}-x^k \rangle + \|x^{k+1}-x^k\|^2 \nonumber\\
&=\|x^k-x^*\|^2-2 \gamma_k \langle x^k-x^*, \nabla f_{i_k}(x^k) \rangle + \gamma_k^2 \| \nabla f_{i_k}(x^k)\|^2 \nonumber \\
\label{eq:start_mom}
&\le \|x^k-x^*\|^2-2 \gamma_k [f_{i_k}(x^k)-f_{i_k}(x^*)]+ \gamma_k^2 \| \nabla f_{i_k}(x^k)\|^2.
\end{align}
We now make use of Lemma~\ref{lemma:fundamental_NGN}:
\begin{align}
\|x^{k+1}-x^*\|^2&\le \|x^k-x^*\|^2-2 \gamma_k [f_{i_k}(x^k)-f_{i_k}(x^*)] \nonumber\\
&\quad+ \left(\frac{4\sigma L}{1+2\sigma L}\right)\gamma_k [f_{i_k}(x^k) - f_{i_k}^*] +\frac{2\sigma^2 L}{1+\sigma L} \cdot\max\left\{0,\frac{2\sigma L-1}{2\sigma L+1}\right\} f_{i_k}^*.
\label{eq:cvx_expansion_after_lemma}
\end{align}
Note that taking the expectation conditional on $x^k$ at this point~(as done in the classical SGD proof) is not feasible: indeed, the variable $f_{i_k}(x^k)-f_{i_k}(x^*)$, which would have expectation $f(x^k)-f(x^*)$, is correlated with $\gamma_k$ -- meaning that we would need to consider the expectation of the product $\gamma_k(f_{i_k}(x^k)-f_{i_k}(x^*))$. 

The analysis of \citet{loizou2021stochastic} in the context of SPS consists in writing $f_{i_k}(x^k)-f_{i_k}(x^*) = [f_{i_k}(x^k)-f_{i_k}^*] - [f_{i_k}(x^*)- f_{i_k}^*]$, where both terms within brackets are positive and therefore one can use the stepsize bounds before taking the expectation. This is a smart approach for a quick computation; however it introduces a bias term $\mathbf{E}[\gamma_k(f_{i_k}(x^*)- f_{i_k}^*)]\le\sigma[f(x^*)- \mathbf{E}_{i}f_{i}^*] = O(\sigma)$. It is more desirable, if the method allows, to have error terms only of the order $O(\sigma^2)$, so that one can guarantee later in the proof that, as $\sigma\to 0$, the method converges to the problem solution.

To this end, we write $\gamma_k = \rho + \epsilon_k$, where both $\rho$ and $\epsilon_k$ are non-negative and $\rho$ is deterministic. For intuition, the reader can think of $\rho$ as a stepsize lower bound such that ideally $\rho=O(\sigma)$ and $\epsilon_k=O(\sigma^2)$ for all realizations of $\gamma_k$---we make it more precise in the next lines:
\begin{align*}
\|x^{k+1}-x^*\|^2- \|x^k-x^*\|^2&\le -2 \rho [f_{i_k}(x^k)-f_{i_k}(x^*)] -2\epsilon_k[f_{i_k}(x^k)-f_{i_k}(x^*)]\\
&\quad+ \left(\frac{4\sigma L}{1+2\sigma L}\right)\gamma_k [f_{i_k}(x^k) - f_{i_k}^*] +O(\sigma^2) f_{i_k}^*,
\end{align*}
where we wrote the last term in~\eqref{eq:cvx_expansion_after_lemma} simply as $O(\sigma^2)$ for brevity but will plug in the correct expression at the end of the proof.

At this point, we write $f_{i_k}(x^k)-f_{i_k}(x^*) = [f_{i_k}(x^k)-f_{i_k}^*] - [f_{i_k}(x^*)- f_{i_k}^*]$ \textit{only for the second term in the bound above}. Our purpose, is to make the resulting term $-2\epsilon_k[f_{i_k}(x^k)-f_{i_k}^*]$~(negative) dominant compared to the third term in the bound above~(positive). In formulas, the bound on the distance update $\|x^{k+1}-x^*\|^2- \|x^k-x^*\|^2$ becomes
\begin{equation*}
-2 \rho [f_{i_k}(x^k)-f_{i_k}(x^*)] -2\left(\epsilon_k- \frac{2\sigma L\gamma_k}{1+2\sigma L}\right)[f_{i_k}(x^k)-f_{i_k}^*]+ 2\epsilon_k[f_{i_k}(x^*)-f_{i_k}^*]  +O(\sigma^2) f_{i_k}^*,
\end{equation*}
and we require $\epsilon_k- \frac{2\sigma L}{1+2\sigma L}\gamma_k\ge 0$. Note that $\epsilon_k = \gamma_k-\rho$. Therefore, we need
\begin{equation*}
     \left(1-\frac{2\sigma L}{1+2\sigma L}\right)\gamma_k\ge \rho\quad \implies \gamma_k\ge (1+2\sigma L)\rho.
\end{equation*}
Since $\gamma_k\ge\frac{\sigma}{1+\sigma L}$ thanks to Lemma~\ref{lemma:step_bounds}, we have that a sufficient condition is
\begin{equation*}
    \rho\le\frac{\sigma}{(1+2\sigma L)(1+\sigma L)}.
\end{equation*}
Let us then pick $\rho$ equal to this upper bound. Our bound on the distance update simplifies as
\begin{equation*}
-\frac{2\sigma}{(1+2\sigma L)(1+\sigma L)} [f_{i_k}(x^k)-f_{i_k}(x^*)] + 2\epsilon_k[f_{i_k}(x^*)-f_{i_k}^*]  +O(\sigma^2) f_{i_k}^*.
\end{equation*}
We now get to the interesting part: what is the order of $\epsilon_k$ under our choice for $\rho$?
\begin{equation*}
    \epsilon_k = \gamma_k -\rho\le \sigma -\frac{\sigma}{(1+2\sigma L)(1+\sigma L)} = \frac{\sigma(1+2\sigma L)(1+\sigma L)-\sigma}{(1+2\sigma L)(1+\sigma L)}.
\end{equation*}
Simplifying the bound, we get the desired result: $\epsilon_k = O(\sigma^2)$. Indeed,
\begin{equation}
    \epsilon_k\le \frac{3 L \sigma^2 + 2 L^2 \sigma^3}{(1+2\sigma L)(1+\sigma L)} = L \sigma^2  \frac{3 + 2 L\sigma}{(1+2\sigma L)(1+\sigma L)}\le\frac{3 L \sigma^2}{1+2\sigma L}.
    \label{eq:bound_epsilon}
\end{equation}
All in all, our bound becomes
\begin{equation}
\|x^{k+1}-x^*\|^2- \|x^k-x^*\|^2\le -T_0(\sigma) [f_{i_k}(x^k)-f_{i_k}(x^*)] + T_1(\sigma^2)[f_{i_k}(x^*)-f_{i_k}^*]  +T_2(\sigma^2) f_{i_k}^*,
\label{eq:local_exp_convex}
\end{equation}
where
\begin{align}
\label{eq:T_0}
T_0(\sigma) &= \frac{2\sigma}{(1+2\sigma L)(1+\sigma L)}, \\
\label{eq:T_1}
T_1(\sigma^2) &= \frac{6 L \sigma^2}{1+2\sigma L}, \\
\label{eq:T_2}
T_2(\sigma^2) &=  \frac{2\sigma^2 L}{1+\sigma L} \cdot\max\left\{0,\frac{2\sigma L-1}{2\sigma L+1}\right\}.
\end{align}

We are finally ready to take the conditional expectation with respect to $k$:
\begin{align*}
\mathbf{E}_k\|x^{k+1}-x^*\|^2
&\le \|x^k-x^*\|^2-T_0(\sigma) [f(x^k)-f(x^*)]\\
&\quad +T_1(\sigma^2) \mathbf{E}_k[f_{i_k}(x^*)-f_{i_k}^*] +T_2(\sigma^2) \mathbf{E}_k[f_{i_k}^*].
\end{align*}
Let us call $E_k$ the expected error at step $k$:
\begin{align}
    E_k(\sigma^2)&:=T_1(\sigma^2) \mathbf{E}_k[f_{i_k}(x^*)-f_{i_k}^*] +T_2(\sigma^2) \mathbf{E}_k[f_{i_k}^*]\nonumber\\
    &\le T_1(\sigma^2) \Delta_{\text{int}} +T_2(\sigma^2) \Delta_{\text{pos}},
\label{eq:E_k}
\end{align}
where $\Delta_{\text{int}}:=\mathbf{E}_k[f_{i_k}(x^*)-f_{i_k}^*]$ and $\Delta_{\text{pos}}:=\mathbf{E}_k[f_{i_k}^*]$. Therefore we get the compact formula
\begin{equation*}
    \mathbf{E}_k\|x^{k+1}-x^*\|^2- \|x^k-x^*\|^2\le -T_0(\sigma) [f(x^k)-f(x^*)] + E_k(\sigma^2)
\end{equation*}
Taking the expectation again and using the tower property
\begin{equation*}
    \mathbf{E}\|x^{k+1}-x^*\|^2- \mathbf{E}\|x^k-x^*\|^2\le - T_0(\sigma) \mathbf{E}[f(x^k)-f(x^*)] + E_k(\sigma^2)
\end{equation*}
Finally, averaging over iterations,
\begin{equation*}
    \frac{1}{K}\sum_{k=0}^{K-1}\mathbf{E}\|x^{k+1}-x^*\|^2- \mathbf{E}\|x^k-x^*\|^2\le -\frac{T_0(\sigma)}{K}\sum_{k=0}^{K-1}\mathbf{E}[f(x^k)-f(x^*)] + E_k(\sigma^2).
\end{equation*}
Using linearity of expectation
\begin{equation*}
    \frac{1}{K}\mathbf{E}\|x^{K}-x^*\|^2- \frac{1}{K}\mathbf{E}\|x^0-x^*\|^2\le -T_0(\sigma)\mathbf{E}\left[\sum_{k=0}^{K-1}\frac{1}{K}f(x^k)-f(x^*)\right] + E_k(\sigma^2).
\end{equation*}
and therefore
\begin{equation*}
    \mathbf{E}\left[\frac{1}{ K}\sum_{k=0}^{K-1}f(x^k)-f(x^*)\right] \le \frac{1}{T_0(\sigma) K}\mathbf{E}\|x^0-x^*\|^2  + \frac{E_k(\sigma^2)}{T_0(\sigma)}.
\end{equation*}
Finally, by Jensen's inequality,
\begin{equation*}
    \mathbf{E}\left[f(\bar x^k)-f(x^*)\right] \le \frac{1}{T_0(\sigma) K}\mathbf{E}\|x^0-x^*\|^2  + \frac{E_k(\sigma^2)}{T_0(\sigma)},
\end{equation*}
where $\bar x^K =\frac{1}{K}\sum_{k=0}^{K-1}x^k$.
Finally, using the bound on $E_k(\sigma^2)$ in~\eqref{eq:E_k} and definitions of $T_0$, $T_1$ and $T_2$ in~\eqref{eq:T_0}, \eqref{eq:T_1} and~\eqref{eq:T_2}, respectively, we arrive at
\begin{align*}
    \mathbf{E}\left[f(\bar x^k)-f(x^*)\right] &\le \frac{(1+2\sigma L)^2}{2\sigma K}\mathbf{E}\|x^0-x^*\|^2\\
    &+ 3\sigma L(1+\sigma L) \Delta_{\text{int}} +\sigma L \cdot\max\left\{0,2\sigma L-1\right\} \Delta_{\text{pos}}.
\end{align*}
This concludes the proof.
\end{proof}

\begin{proof}[\textbf{Proof of Theorem~\ref{thm:stoc_strong_cvx}}]

We proceed very similarly with the convex setting
\begin{align*}
\|x^{k+1}-x^*\|^2 
&=\|(x^{k+1}-x^k) +(x^k-x^*)\|^2\\&=\|x^k-x^*\|^2+2 \langle x^k-x^*, x^{k+1}-x^k \rangle + \|x^{k+1}-x^k\|^2\\
&=\|x^k-x^*\|^2-2 \gamma_k \langle x^k-x^*, \nabla f_{i_k}(x^k) \rangle + \gamma_k^2 \| \nabla f_{i_k}(x^k)\|^2
\end{align*}
Making use of Lemma~\ref{lemma:fundamental_NGN}:
\begin{align*}
\|x^{k+1}-x^*\|^2&\le \|x^k-x^*\|^2-2 \gamma_k\langle x^k-x^*, \nabla f_{i_k}(x^k) \rangle\\
&\quad+ \left(\frac{4\sigma L}{1+2\sigma L}\right)\gamma_k [f_{i_k}(x^k) - f_{i_k}^*] +\frac{2\sigma^2 L}{1+\sigma L} \cdot\max\left\{0,\frac{2\sigma L-1}{2\sigma L+1}\right\} f_{i_k}^*.
\end{align*}
As in the proof for Thorem~\ref{thm:stoc_cvx}, we decompose $\gamma$ into a deterministic lower bound $\rho$ and some error $\epsilon_k\in O(\sigma^2)$: $\gamma_k = \rho + \epsilon_k$. As in Thorem~\ref{thm:stoc_cvx} we choose $\rho = \frac{\sigma}{(1+2\sigma L)(1+\sigma L)}$. This leads to (see Equation~\eqref{eq:bound_epsilon})
\begin{equation*}
    \epsilon_k =\gamma_k-\rho \le \frac{3 L \sigma^2}{1+2\sigma L}.
\end{equation*}
Note that we have not yet used convexity. We use it now \textit{after} decomposing $\gamma_k$:
\begin{align*}
&\|x^{k+1}-x^*\|^2\\&\le \|x^k-x^*\|^2-2 \rho\langle x^k-x^*, \nabla f_{i_k}(x^k) \rangle -2 \epsilon_k\langle x^k-x^*, \nabla f_{i_k}(x^k) \rangle\\
&\quad+ \left(\frac{4\sigma L}{1+2\sigma L}\right)\gamma_k [f_{i_k}(x^k) - f_{i_k}^*] +\frac{2\sigma^2 L}{1+\sigma L} \cdot\max\left\{0,\frac{2\sigma L-1}{2\sigma L+1}\right\} f_{i_k}^*\\
&\le \|x^k-x^*\|^2-2 \rho\langle x^k-x^*, \nabla f_{i_k}(x^k) \rangle -2 \epsilon_k[f_{i_k}(x^k)-f_{i_k}(x^*)]\\
&\quad+ \left(\frac{4\sigma L}{1+2\sigma L}\right)\gamma_k [f_{i_k}(x^k) - f_{i_k}^*] +\frac{2\sigma^2 L}{1+\sigma L} \cdot\max\left\{0,\frac{2\sigma L-1}{2\sigma L+1}\right\} f_{i_k}^*.
\end{align*}
Therefore,
\begin{align*}
&\|x^{k+1}-x^*\|^2\\&\le \|x^k-x^*\|^2-2 \rho\langle x^k-x^*, \nabla f_{i_k}(x^k) \rangle -2 \epsilon_k[f_{i_k}(x^k)-f_{i_k}^*] + 2 \epsilon_k[f_{i_k}(x^*)-f_{i_k}^*]\\
&\quad+ \left(\frac{4\sigma L}{1+2\sigma L}\right)\gamma_k [f_{i_k}(x^k) - f_{i_k}^*] +\frac{2\sigma^2 L}{1+\sigma L} \cdot\max\left\{0,\frac{2\sigma L-1}{2\sigma L+1}\right\} f_{i_k}^*\\
&= \|x^k-x^*\|^2-2 \rho\langle x^k-x^*, \nabla f_{i_k}(x^k) \rangle + 2 \epsilon_k[f_{i_k}(x^*)-f_{i_k}^*]\\
&\quad+ 2\left(\epsilon_k-\frac{2\sigma L \gamma_k}{1+2\sigma L}\right)\gamma_k [f_{i_k}(x^k) - f_{i_k}^*] +\frac{2\sigma^2 L}{1+\sigma L} \cdot\max\left\{0,\frac{2\sigma L-1}{2\sigma L+1}\right\} f_{i_k}^*.
\end{align*}
As we know from the proof of for the convex setting, $\epsilon_k-\frac{2\sigma L \gamma_k}{1+2\sigma L}\ge 0$ hence, since $f_{i_k}(x^k) - f_{i_k}^*>0$, we can drop the term. using the upper bound for $\epsilon_k$, we get

\begin{align*}
    &\|x^{k+1}-x^*\|^2 - \|x^k-x^*\|^2\\
    &\le  -2\rho\langle x^k-x^*, \nabla f_{i_k}(x^k) \rangle + \frac{6 L \sigma^2}{1+2\sigma L}[f_{i_k}(x^*)-f_{i_k}^*] + \frac{2\sigma^2 L}{1+\sigma L} \cdot\max\left\{0,\frac{2\sigma L-1}{2\sigma L+1}\right\} f_{i_k}^*.
\end{align*}

Now we take the expectation conditional on $x^k$, recalling our definition for the conditional expectations: $\Delta_{\text{int}}:=\mathbf{E}_k[f_{i_k}(x^*)-f_{i_k}^*]$ and $\Delta_{\text{pos}}:=\mathbf{E}_k[f_{i_k}^*]$.
\begin{align*}
    &\mathbf{E}_k\|x^{k+1}-x^*\|^2 - \|x^k-x^*\|^2\\
    &\le  -2\rho\langle x^k-x^*, \nabla f(x^k) \rangle + \frac{6 L \sigma^2}{1+2\sigma L}\Delta_{\text{int}} + \frac{2\sigma^2 L}{1+\sigma L} \cdot\max\left\{0,\frac{2\sigma L-1}{2\sigma L+1}\right\} \Delta_{\text{pos}}.
\end{align*}
It is now time to use $\mu$-strong convexity of $f$:
\begin{align*}
    \mathbf{E}_k\|x^{k+1}-x^*\|^2 &\le  \|x^k-x^*\|^2 -2\rho\langle x^k-x^*, \nabla f(x^k) \rangle\\
    &+ \frac{6 L \sigma^2}{1+2\sigma L}\Delta_{\text{int}} + \frac{2\sigma^2 L}{1+\sigma L} \cdot\max\left\{0,\frac{2\sigma L-1}{2\sigma L+1}\right\} \Delta_{\text{pos}}\\
    &\le  (1-\mu\rho)\|x^k-x^*\|^2 -2\rho[f(x^k)-f(x^*)]\\
    &+ \frac{6 L \sigma^2}{1+2\sigma L}\Delta_{\text{int}} + \frac{2\sigma^2 L}{1+\sigma L} \cdot\max\left\{0,\frac{2\sigma L-1}{2\sigma L+1}\right\} \Delta_{\text{pos}}.
\end{align*}
Note that $f(x^k)-f(x^*)>0$, since $x^*$ is the minimizer for $f$. Hence, we can drop this term.
\begin{align*}
    \mathbf{E}_k\|x^{k+1}-x^*\|^2 &\le   (1-\mu\rho)\|x^k-x^*\|^2+ \frac{6 L \sigma^2}{1+2\sigma L}\Delta_{\text{int}} + \frac{2\sigma^2 L}{1+\sigma L} \cdot\max\left\{0,\frac{2\sigma L-1}{2\sigma L+1}\right\} \Delta_{\text{pos}}.
\end{align*}
Taking the expectation again and using the tower property,
\begin{align*}
    \mathbf{E}\|x^{k+1}-x^*\|^2 &\le   (1-\mu\rho)\mathbf{E}\|x^k-x^*\|^2+ \frac{6 L \sigma^2}{1+2\sigma L}\Delta_{\text{int}} + \frac{2\sigma^2 L}{1+\sigma L} \cdot\max\left\{0,\frac{2\sigma L-1}{2\sigma L+1}\right\} \Delta_{\text{pos}}.
\end{align*}

This recurrence is of the kind
$y_{k+1} = ay_k + b$, therefore $y_k = a^k y_0 +\left(\sum_{i=0}^{k-1} a^i\right)b$. Since $b\ge0$, we have $y_k \le a^k y_0 + \frac{b}{1-a}$. Therefore,
\begin{align*}
    \mathbf{E}\|x^{k+1}-x^*\|^2 &\le   (1-\mu\rho)^k\mathbf{E}\|x^0-x^*\|^2\\
    &\qquad+ \frac{6 L \sigma^2}{(1+2\sigma L)\rho\mu}\Delta_{\text{int}} + \frac{2\sigma^2 L}{(2\sigma L+1)(1+\sigma L)\rho\mu} \cdot\max\left\{0,2\sigma L-1\right\} \Delta_{\text{pos}}.
\end{align*}
Now recall that $\rho = \frac{\sigma}{(1+2\sigma L)(1+\sigma L)}$. We can then simplify the error term:
\begin{align*}
    \mathbf{E}\|x^{k+1}-x^*\|^2 &\le   (1-\mu\rho)^k\mathbf{E}\|x^0-x^*\|^2\\
    &\qquad+ \frac{6 L \sigma}{\mu}(1+\sigma L)\Delta_{\text{int}} +\frac{2\sigma L}{\mu} \max\left\{0,2\sigma L-1\right\} \Delta_{\text{pos}},
\end{align*}
which is the desired result.
\end{proof}

\subsubsection{General non-convex setting}
\label{sec:nonconvex}

Stochastic NGN can also be applied successfully in the nonconvex setting, as we will see from the deep learning experiments in Section~\ref{sec:experiments}. For convergence analysis in this setting, knowledge of the Lipschitz constant is required and we need to define an additional quantity
$${\color{teal}\Delta^2_{\text{noise}} = \sup_{x\in\R^d} \mathbf{E}[\|\nabla f(x) - \nabla f_{i}(x)\|^2]}, $$
which is the usual bound on stochastic gradient variance. The convergence rate provided here is non-adaptive~(with known~$L$), but we are confident that it can be improved in the future.
\begin{tcolorbox}
\begin{theorem}[NGN, nonconvex]
Let $f =\frac{1}{N}\sum_{i=1}^{N} f_i$, where each $f_i:\R^d\to\R$ is non-negative, $L$-smooth and potentially non-convex. 
Consider the SGD method~\eqref{eq:sgd} with the stochastic NGN stepsize~\eqref{eq:stoch-NGN}.
Then for any $\sigma\le\frac{1}{2L}$, we have
\begin{equation}
\mathbf{E}\left[\frac{1}{K}\sum_{k=0}^{K-1}\|\nabla f(x^k)\|^2\right]\le {\color{olive}\frac{12\cdot \mathbf{E}[f(x^0)-f^*]}{\sigma K}} +{\color{teal}18\sigma L \Delta^2_{\text{noise}}},
\end{equation}
Decreasing $\sigma$ as $O(1/\sqrt{K})$, we get
a convergence rate of $O\left(\frac{\ln(K)}{\sqrt{K}}\right)$.
\label{thm:stoc_nc}
\end{theorem}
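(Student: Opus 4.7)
}
The plan is to mimic the standard non-convex SGD descent argument, but with the critical twist that the stepsize $\gamma_k$ is correlated with the stochastic gradient $\nabla f_{i_k}(x^k)$. I start from the $L$-smooth descent lemma applied to $f$ (which holds since $f$ inherits $L$-smoothness from the $f_i$'s):
\[
f(x^{k+1}) \le f(x^k) - \gamma_k \langle \nabla f(x^k), \nabla f_{i_k}(x^k)\rangle + \frac{L}{2}\gamma_k^2 \|\nabla f_{i_k}(x^k)\|^2.
\]
The two error-generating terms on the right are the inner product (which should give roughly $-\gamma_k \|\nabla f(x^k)\|^2$ in expectation) and the squared norm (which should give a noise term).

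To deal with the correlation between $\gamma_k$ and $\nabla f_{i_k}(x^k)$, I reuse the decomposition trick introduced in the proof of Theorem~\ref{thm:stoc_cvx}: write $\gamma_k = \rho + \epsilon_k$ with $\rho = \sigma/[(1+2\sigma L)(1+\sigma L)]$ deterministic and $\epsilon_k \ge 0$ random, and recall from~\eqref{eq:bound_epsilon} that $\epsilon_k \le 3L\sigma^2/(1+2\sigma L)$ pointwise. Then $-\gamma_k\langle \nabla f(x^k), \nabla f_{i_k}(x^k)\rangle$ splits into $-\rho\langle \nabla f(x^k), \nabla f_{i_k}(x^k)\rangle$, whose conditional expectation is exactly $-\rho\|\nabla f(x^k)\|^2$, plus the deviation term $-\epsilon_k \langle \nabla f(x^k), \nabla f_{i_k}(x^k)\rangle$, which I control by Cauchy--Schwarz/Young's inequality using the deterministic bound on $\epsilon_k$ and the noise assumption $\mathbf{E}_k\|\nabla f_{i_k}(x^k)\|^2 \le \|\nabla f(x^k)\|^2 + \Delta^2_{\text{noise}}$. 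This produces a term of order $L\sigma^2\|\nabla f(x^k)\|^2$ and a term of order $L\sigma^2\Delta^2_{\text{noise}}$.

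For the $\tfrac{L}{2}\gamma_k^2\|\nabla f_{i_k}(x^k)\|^2$ term I use the simple pointwise bound $\gamma_k \le \sigma$ from Lemma~\ref{lemma:step_bounds} and again the variance bound, yielding at most $\tfrac{L\sigma^2}{2}(\|\nabla f(x^k)\|^2 + \Delta^2_{\text{noise}})$. Combining everything and taking conditional expectation gives a one-step inequality of the form
\[
\mathbf{E}_k[f(x^{k+1})] - f(x^k) \;\le\; -\bigl(\rho - c_1 L\sigma^2\bigr)\,\|\nabla f(x^k)\|^2 \;+\; c_2\, L\sigma^2\,\Delta^2_{\text{noise}},
\]
for explicit constants $c_1, c_2$. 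The restriction $\sigma \le 1/(2L)$ is used here to ensure that $\rho \ge \sigma/3$ dominates the $c_1 L\sigma^2$ term, making the coefficient of $\|\nabla f(x^k)\|^2$ at least $\sigma/12$ (after bookkeeping).

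Finally, I take total expectation using the tower property, telescope the inequality over $k=0,\ldots,K-1$, and use $f(x^K) \ge f^*$. Dividing by $K$ and by the coefficient of $\|\nabla f(x^k)\|^2$ yields the desired bound with constants $12$ and $18$. The main obstacle, exactly as in the convex case, is the careful handling of the $\epsilon_k$ cross term: a naive Young's-inequality split would leak an $O(\sigma)\|\nabla f(x^k)\|^2$ contribution (instead of $O(L\sigma^2)$) and destroy the descent; exploiting that $\epsilon_k = O(L\sigma^2)$ uniformly and pairing it with the variance bound is what keeps the analysis tight enough to recover a genuine descent inequality for all $\sigma \le 1/(2L)$.
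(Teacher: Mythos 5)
Your overall strategy matches the paper's: start from the $L$-smooth descent lemma, decompose the stepsize into a deterministic part plus a non-negative stochastic offset, pass the deterministic part through the conditional expectation cleanly, and absorb the stochastic offset into the noise term. You also correctly identify the crux: the offset must be $O(L\sigma^2)$ uniformly, or the cross term corrupts the descent coefficient. But there is a concrete numerical gap in your choice of $\rho$ that would break the argument.

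You propose to reuse $\rho = \sigma/[(1+2\sigma L)(1+\sigma L)]$ from the convex proof. In the convex case that small $\rho$ is deliberately chosen to leave $\epsilon_k$ \emph{large enough} to dominate the positive term $\tfrac{2\sigma L\gamma_k}{1+2\sigma L}[f_{i_k}(x^k)-f_{i_k}^*]$ coming from Lemma~\ref{lemma:fundamental_NGN}. There is no such term to cancel in the nonconvex proof, so the optimal play reverses: you should make $\rho$ as \emph{large} as the pointwise lower bound from Lemma~\ref{lemma:step_bounds} allows, namely $\rho = \tfrac{\sigma}{1+\sigma L}$, which makes $\epsilon_k \le \tfrac{\sigma^2 L}{1+\sigma L}$. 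The paper does exactly this (via $\gamma_k = \tfrac{\sigma}{1+\sigma L} + \tfrac{\sigma^2 L}{1+\sigma L}\xi_{i_k}$ with $\xi_{i_k}\in[0,1]$). With your smaller $\rho$ the slack $\epsilon_k$ can reach $\sigma - \rho$, which at $\sigma L = 1/2$ is $\tfrac{2\sigma}{3}$. Plugging into the one-step inequality, the coefficient of $\|\nabla f(x^k)\|^2$ becomes $-\rho + \epsilon_k^{\max} + \tfrac{L\sigma^2}{2} = -\tfrac{\sigma}{3} + \tfrac{2\sigma}{3} + \tfrac{\sigma}{4} = +\tfrac{7\sigma}{12} > 0$, i.e.\ the bound fails to be a descent inequality precisely at the boundary $\sigma = 1/(2L)$ that the theorem claims to cover. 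Your bookkeeping claim that the coefficient is $\ge \sigma/12$ therefore does not hold with your $\rho$.

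The fix is simply to take $\rho = \tfrac{\sigma}{1+\sigma L}$ so that $\epsilon_k \le \tfrac{\sigma^2 L}{1+\sigma L}$. Then the coefficient is $-\tfrac{\sigma}{1+\sigma L} + \tfrac{\sigma^2 L}{1+\sigma L} + \tfrac{L\sigma^2}{2} = -\sigma\bigl(\tfrac{1-\sigma L}{1+\sigma L} - \tfrac{\sigma L}{2}\bigr)$, which at $\sigma L = 1/2$ equals exactly $-\tfrac{\sigma}{12}$, and the rest of your plan (telescoping, $f(x^K)\ge f^*$, dividing by $K$) goes through and produces the constants $12$ and $18$. The rest of your argument — using $\mathbf{E}_k\|\nabla f_{i_k}(x^k)\|^2 \le \|\nabla f(x^k)\|^2 + \Delta^2_{\text{noise}}$, bounding $|\langle \nabla f, \nabla f_{i_k}\rangle|$, and telescoping with the tower property — is the right approach and matches the paper.
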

\end{tcolorbox}

\begin{proof}
    The proof deviates significantly from the one for stochastic Polyak stepsizes by~\cite{loizou2021stochastic}. We start with the classic expansion based on gradient smoothness
\begin{align*}
    f(x^{k+1})-f(x^k)&\le \langle\nabla f(x^k),x^{k+1}-x^k\rangle + \frac{L}{2}\|x^{k+1}-x^k\|^2\\
    &= -\gamma_k\langle\nabla f(x^k),\nabla f_{i_k}(x^k)\rangle + \frac{L\gamma_k^2}{2}\|\nabla f_{i_k}(x^k)\|^2\\
    &\le -\gamma_k\langle\nabla f(x^k),\nabla f_{i_k}(x^k)\rangle + \frac{L\sigma^2}{2}\|\nabla f_{i_k}(x^k)\|^2.
\end{align*}
We would like to take the conditional expectation with respect to $x^k$. Yet, this is not easy since $\gamma_k$ and $\nabla f_{i_k}(x^k)$ are correlated. Note however that we can write, given the bound in Lemma~\ref{lemma:NGN_lemma},
\begin{equation}
    \gamma_k = \frac{\sigma}{\sigma L + 1} + \frac{\sigma^2 L}{\sigma L +1}\xi_{i_k},
\end{equation}
where $\xi_{i_k}\in[0,1]$ is a random variable. When $\xi_{i_k}=1$ we have $\gamma_k=\sigma$, and when $\xi{i_k}=0$ it holds that $\gamma_k=\frac{\sigma}{\sigma L + 1}$. Thil model for $\gamma_k$ covers its complete range and makes one property explicit : $\gamma_k= O(\sigma)$ with variation range $O(\sigma^2)$. As such, as $\sigma\to 0$ the stepsize becomes deterministic, and the update reduces to SGD with constant stepsize. Leveraging this representation of $\gamma_k$, we can write
\begin{align*}
    -\gamma_k\langle\nabla f(x^k),\nabla f_{i_k}(x^k) &= -\frac{\sigma}{\sigma L +1} \langle\nabla f(x^k),\nabla f_{i_k}(x^k)\rangle-\frac{\sigma^2 L}{\sigma L +1}\xi_{i_k}\langle\nabla f(x^k),\nabla f_{i_k}(x^k)\rangle\\
&\le -\frac{\sigma}{\sigma L +1} \langle\nabla f(x^k),\nabla f_{i_k}(x^k)\rangle+\frac{\sigma^2 L}{\sigma L +1}\left |\xi_{i_k}\right |\cdot \left |\langle\nabla f(x^k),\nabla f_{i_k}(x^k)\rangle\right |\\
&\le -\frac{\sigma}{\sigma L +1} \langle\nabla f(x^k),\nabla f_{i_k}(x^k)\rangle+\frac{\sigma^2 L}{\sigma L +1}\left|\langle\nabla f(x^k),\nabla f_{i_k}(x^k)\rangle\right|.
\end{align*}
Therefore
\begin{equation}\label{eq:inner-prod-bound}
    -\mathbf{E}_{k}[\gamma_k\langle\nabla f(x^k),\nabla f_{i_k}(x^k)\rangle] \le-\frac{\sigma}{\sigma L +1}\|\nabla f(x^k)\|^2+\frac{\sigma^2 L}{\sigma L +1}\mathbf{E}_{k}\left|\langle\nabla f(x^k),\nabla f_{i_k}(x^k)\rangle\right|
\end{equation}
The first term in the above bound is $O(\sigma)$ lnd directly helps convergence, while the last term is an error of $O(\sigma^2)$. Next, recall the basic inequality: for any $a,b\in\R^d$:
\begin{equation*}
    |\langle a,b \rangle|\le \frac{1}{2}\|a\|^2 +\frac{1}{2}\|b\|^2 +\frac{1}{2}\|a-b\|^2.
\end{equation*}
Applying to the last term in~\eqref{eq:inner-prod-bound} and using the assumption $\zeta^2=\sup_{x\in\R^d}\mathbf{E}_i\|\nabla f(x)-\nabla f_i(x)\|^2<\infty$, we have 
\begin{align*}
    2\mathbf{E}_{k}\left|\langle\nabla f(x^k),\nabla f_{i_k}(x^k)\rangle\right|&\le \|\nabla f(x^k)\|^2 +\mathbf{E}_{k}\|\nabla f_{i_k}(x^k)\|^2 +\mathbf{E}_{k}\|\nabla f(x^k)-\nabla f_{i_k}(x^k)\|^2\\
    &\le \|\nabla f(x^k)\|^2 +\mathbf{E}_{k}\|\nabla f_{i_k}(x^k)\|^2 +\zeta^2\\
    &\le 2\|\nabla f(x^k)\|^2 +2\zeta^2.
\end{align*}
Therefore, we get the compact inequality
\begin{align*}
    -\mathbf{E}_{k}[\gamma_k\langle\nabla f(x^k),\nabla f_{i_k}(x^k)\rangle] &\le-\frac{\sigma}{\sigma L +1}\|\nabla f(x^k)\|^2+\frac{\sigma^2 L}{\sigma L +1}\left(\|\nabla f(x^k)\|^2 +\zeta^2\right)\\
    &\le -\sigma\left(\frac{1-\sigma L}{1+\sigma L}\right)\|\nabla f(x^k)\|^2 + \frac{\sigma^2 L}{\sigma L +1} \zeta^2,
\end{align*}
which we can insert back in the original expansion to get
\begin{align*}
    \mathbf{E}_k[f(x^{k+1})]-f(x^k)&\le -\mathbf{E}_k[\gamma_k\langle\nabla f(x^k),\nabla f_{i_k}(x^k)\rangle] + \frac{L\sigma^2}{2}\mathbf{E}_k\|\nabla f_{i_k}(x^k)\|^2\\
    &\le \left[-\sigma\left(\frac{1-\sigma L}{1+\sigma L}\right) + \frac{L\sigma^2}{2}\right]\|\nabla f(x^k)\|^2 + \left(\frac{\sigma^2 L}{\sigma L +1} +\frac{\sigma^2 L}{2}\right)\zeta^2. 
\end{align*}
We therefore need
\begin{equation*}
    -\sigma\left(\frac{1-\sigma L}{1+\sigma L}\right) + \frac{L\sigma^2}{2} = -\sigma\left(\frac{1-\sigma L}{1+\sigma L} -\frac{\sigma L}{2}\right)\le 0.
\end{equation*}
The function $\frac{1-\sigma L}{1+\sigma L} -\frac{\sigma L}{2}$ is monotonically decreasing as $\sigma L>0$ increases. For $\sigma L=0$ it is $1$ and reaches value zero at $-3/2 + \sqrt{17}/2\approxeq 0.56$. For $\sigma L = 0.5$, one gets $\frac{1-\sigma L}{1+\sigma L} -\frac{\sigma L}{2} = \frac{0.5}{1.5}-0.25 = 1/12$. Therefore, for $\sigma\le\frac{1}{2L}$, we get $-\sigma\left(\frac{1-\sigma L}{1+\sigma L}\right) + \frac{L\sigma^2}{2}\le -\frac{\sigma}{12}$.

Next, for the noise term, note that $\frac{\sigma^2 L}{\sigma L +1} +\frac{\sigma^2 L}{2} \le \frac{3\sigma^2 L}{2}$. All in all, for $\sigma\le\frac{1}{2L}$, we get:

\begin{equation*}
    \mathbf{E}_k[f(x^{k+1})]-f(x^k)\le -\frac{\sigma}{12}\|\nabla f(x^k)\|^2 + \frac{3\sigma^2 L}{2}\zeta^2,
\end{equation*}
Or, more conveniently:
\begin{equation*}
    \|\nabla f(x^k)\|^2\le -\frac{12}{\sigma}[\mathbf{E}_k[f(x^{k+1})]-f(x^k)] +18\sigma L \zeta^2.
\end{equation*}
After taking the expectation using the tower property, we get
\begin{equation*}
    \mathbf{E}\|\nabla f(x^k)\|^2\le -\frac{12}{\sigma}[\mathbf{E}[f(x^{k+1})]-\mathbf{E}[f(x^k)]] +18\sigma L \zeta^2,
\end{equation*}
Summing over iterations and telescoping the sum, after adding and subtracting $f^*$ we get
\begin{align*}
    \frac{1}{K}\sum_{k=0}^{K-1}\mathbf{E}\|\nabla f(x^k)\|^2&\le -\frac{12}{\sigma K}\sum_{k=0}^{K-1}\mathbf{E}[f(x^{k+1})]+ \frac{12}{\sigma K}\sum_{k=0}^{K-1}\mathbf{E}[f(x^k)] +18\sigma L \zeta^2\\
&\le -\frac{12}{\sigma K}\mathbf{E}[f(x^{K})]+ \frac{12}{\sigma K}\mathbf{E}[f(x^0)] +18\sigma L \zeta^2\\
&\le -\frac{12}{\sigma K}\mathbf{E}[f(x^{K})-f^*]+ \frac{12}{\sigma K}\mathbf{E}[f(x^0)-f^*] +18\sigma L \zeta^2\\
&\le \frac{12}{\sigma K}\mathbf{E}[f(x^0)-f^*] +18\sigma L \zeta^2.
\end{align*}
This concludes the proof.
\end{proof}

\subsection{Technique sketch for annealed regularization}
\label{sec:proofs_annealed_sigma}

In this section, we analyze stochastic NGN stepsize with decreasing $\sigma_k$, i.e.
\begin{equation}
    \gamma_k = \frac{\sigma_k}{1+\frac{\sigma_k}{2f_{i_k}(x^k)} \ \|\nabla f_{i_k}(x^k)\|^2}.
    \label{eq:NGN_app_dec}
\end{equation}
According to the NGN approximation~\eqref{eq:NGN-approx},
the setting $\sigma_k\to 0$ can be thought of as gradually increasing the regularization strength (i.e. getting more similar to a vanilla SGD update as $k\to\infty$). 

For a general time-dependent $\sigma_k$, all local inequalities hold. In particular, the following three lemmas hold with no additional proof required.

\begin{lemma}[Stepsize bounds, time dependent]
Let each $f_i:\R^d\to\R$ be non-negative, differentiable and $L$-smooth. Consider $\gamma_k$ as in~\eqref{eq:NGN_app_dec}, we have
\begin{equation*}
    \gamma_k \in \left[\frac{\sigma_k}{1+\sigma_k L},\sigma\right] = \left[\frac{1}{L +\sigma_k^{-1}},\sigma_k\right].
\end{equation*}
\label{lemma:step_bounds_decreasing}
\end{lemma}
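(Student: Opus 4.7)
The plan is to observe that Lemma~\ref{lemma:step_bounds_decreasing} is a direct per-iteration restatement of Lemma~\ref{lemma:step_bounds}, applied to the current sample loss $f_{i_k}$ with regularization parameter $\sigma_k$ in place of $\sigma$. Since at iteration $k$ the value $\sigma_k$ is a deterministic scalar and the NGN stepsize formula~\eqref{eq:NGN_app_dec} has exactly the same algebraic form as~\eqref{eq:NGN-det-gamma}, the original argument transfers verbatim once $\sigma$ is renamed.

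First I would recall that by $L$-smoothness and non-negativity of $f_{i_k}$, one has $\|\nabla f_{i_k}(x^k)\|^2 \le 2L(f_{i_k}(x^k) - f_{i_k}^*) \le 2L f_{i_k}(x^k)$, which yields the key dimensionless ratio bound
\[
q_k := \frac{\|\nabla f_{i_k}(x^k)\|^2}{2 f_{i_k}(x^k)} \in [0, L].
\]
Then I would rewrite the stepsize as $\gamma_k = \sigma_k/(1 + \sigma_k q_k)$. Since the map $\sigma_k \mapsto \sigma_k/(1+\sigma_k q_k)$ is monotonically decreasing in $q_k$ for fixed $\sigma_k>0$, the maximum is attained at $q_k = 0$ and equals $\sigma_k$, while the minimum is attained at $q_k = L$ and equals $\sigma_k/(1+\sigma_k L)$. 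Rewriting the latter as $1/(L + \sigma_k^{-1})$ gives the second form of the interval.

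I do not expect a genuine obstacle here, since the proof of Lemma~\ref{lemma:step_bounds} uses no property of $\sigma$ beyond being a positive scalar, and the smoothness bound on $f_{i_k}$ was already invoked per sample elsewhere (e.g., in Lemma~\ref{lemma:fundamental_NGN}). The only minor care point is to state the argument sample-wise: the inequality $\|\nabla f_{i_k}(x^k)\|^2 \le 2L f_{i_k}(x^k)$ must hold for the individual $f_{i_k}$, which is guaranteed by the blanket assumption that each $f_i$ is non-negative and $L$-smooth (with $L = \max_i L_i$ as declared at the start of Section~\ref{sec:stochastic}). Hence the bound on $\gamma_k$ holds path-wise for every realization of the index $i_k$ and every value of the deterministic sequence $(\sigma_k)_{k\ge 0}$.
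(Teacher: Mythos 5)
Your proposal is correct and follows exactly the paper's own argument: the paper derives the bound $0 \le \|\nabla f(x)\|^2 / (2f(x)) \le L$ from $L$-smoothness and non-negativity (Section~\ref{sec:step_range}), plugs it into the stepsize formula, and then simply remarks in Section~\ref{sec:proofs_annealed_sigma} that the time-dependent version requires no additional proof since the argument is per-iteration and $\sigma_k$ is a deterministic positive scalar. Your observation that the map $q_k \mapsto \sigma_k/(1+\sigma_k q_k)$ is monotone decreasing is the same monotonicity the paper implicitly invokes, so there is no gap.
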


\begin{lemma}[Fundamental Equality, time dependent]
Consider $\gamma_k$ as in~\eqref{eq:NGN_app_dec}. One has 
\begin{equation*}
    \gamma_k \|\nabla f_{i_k}(x)\|^2= 2\left(\frac{\sigma_k - \gamma_k}{\sigma_k}\right) f_{i_k}(x).
\end{equation*}
\label{lemma:NGN_lemma_decreasing}
\end{lemma}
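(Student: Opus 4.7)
The statement is the direct time-dependent analogue of Lemma~\ref{lemma:NGN_lemma}, and the author has essentially announced that the earlier proof carries over verbatim once $\sigma$ is replaced by $\sigma_k$ in the definition. So my plan is simply to mimic that algebraic manipulation with the extra index.

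Concretely, I would start from the defining identity for the stepsize~\eqref{eq:NGN_app_dec}, namely
\[
\gamma_k = \frac{\sigma_k}{1 + \frac{\sigma_k}{2 f_{i_k}(x)}\|\nabla f_{i_k}(x)\|^2},
\]
which is equivalent to
\[
\left(1 + \frac{\sigma_k}{2 f_{i_k}(x)}\|\nabla f_{i_k}(x)\|^2\right)\gamma_k = \sigma_k.
\]
Subtracting $\gamma_k$ from both sides isolates the gradient term:
\[
\frac{\sigma_k}{2 f_{i_k}(x)} \gamma_k \|\nabla f_{i_k}(x)\|^2 = \sigma_k - \gamma_k.
\]
Multiplying through by $2 f_{i_k}(x)/\sigma_k$ (which is legitimate since $\sigma_k>0$ and we may restrict attention to points where $f_{i_k}(x)>0$; the identity is trivially true when $f_{i_k}(x)=0$ because then $\nabla f_{i_k}(x)=0$ by non-negativity and smoothness, forcing $\gamma_k=\sigma_k$ as well) yields the claimed equality.

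There is essentially no obstacle: time dependence of $\sigma_k$ is inert because $\sigma_k$ is deterministic at step $k$ and the identity is a pointwise algebraic consequence of the definition of $\gamma_k$ at iteration $k$ alone; no smoothness, convexity, or stochastic-expectation argument is needed. The only minor care to take is the degenerate case $f_{i_k}(x)=0$, which is handled by the remark above (and matches the convention used implicitly in~\eqref{eq:NGN_app_dec}).
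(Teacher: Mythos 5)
Your proof is correct and follows the same route as the paper: the paper simply notes that Lemma~\ref{lemma:NGN_lemma} and its proof carry over verbatim with $\sigma$ replaced by $\sigma_k$, which is exactly the algebraic manipulation you perform. Your extra remark on the degenerate case $f_{i_k}(x)=0$ is a small but welcome refinement that the paper leaves implicit.
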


\begin{lemma}[Fundamental inequality, time dependent]
Let each $f_i:\R^d\to\R$ be non-negative, $L$-smooth and convex. Consider $\gamma_k$ as in~\eqref{eq:NGN_app_dec}, we have
\begin{equation*}
    \gamma_k^2 \| \nabla f_{i_k}(x^k)\|^2\le \left(\frac{4\sigma_k L}{1+2\sigma_k L}\right)\gamma_k [f_{i_k}(x^k) - f_{i_k}^*] + \frac{2\sigma_k^2 L}{1+\sigma_k L} \cdot\max\left\{0,\frac{2\sigma_k L-1}{2\sigma_k L+1}\right\} \cdot f_{i_k}^*.
\end{equation*}
\label{lemma:fundamental_NGN_decreasing}
\end{lemma}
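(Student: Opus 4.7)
The plan is to mirror the proof of Lemma~\ref{lemma:fundamental_NGN} verbatim, replacing $\sigma$ by $\sigma_k$ at every occurrence. The argument is entirely \emph{pointwise in time}: at iteration $k$ it only uses the instantaneous relations furnished by Lemmas~\ref{lemma:step_bounds_decreasing} and~\ref{lemma:NGN_lemma_decreasing}, both of which hold for arbitrary time-varying $\sigma_k$ with no telescoping or coupling between iterates. Hence no extra obstacle is introduced by annealing.

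Concretely, I would start from the two building blocks. First, $L$-smoothness of $f_{i_k}$ yields
\[
\gamma_k^2\|\nabla f_{i_k}(x^k)\|^2 \le 2L\gamma_k^2\bigl[f_{i_k}(x^k)-f_{i_k}^*\bigr].
\]
Second, the time-dependent fundamental equality (Lemma~\ref{lemma:NGN_lemma_decreasing}) rewrites the same left-hand side as
\[
\gamma_k^2\|\nabla f_{i_k}(x^k)\|^2 = 2\gamma_k\Bigl(\frac{\sigma_k - \gamma_k}{\sigma_k}\Bigr) f_{i_k}(x^k).
\]
Taking the convex combination $(1-\delta)$ of the first and $\delta$ of the second for any $\delta\in(-\infty,1]$ and collecting terms yields
\[
\gamma_k^2\|\nabla f_{i_k}(x^k)\|^2 \le 2\gamma_k A(\delta) [f_{i_k}(x^k)-f_{i_k}^*] + 2\gamma_k B(\delta) f_{i_k}^*,
\]
with $A(\delta) = \delta(\sigma_k-\gamma_k)/\sigma_k + (1-\delta)L\gamma_k$ and $B(\delta) = \delta(\sigma_k-\gamma_k)/\sigma_k$.

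I would then make the same choice of $\delta$ as in the constant-$\sigma$ case, namely $\delta_k := (2\sigma_k L-1)/(2\sigma_k L + 1)\in[-1,1]$. This choice is motivated exactly as in the remark after Lemma~\ref{lemma:fundamental_NGN}: it is the smallest allowed $\delta$ consistent with $A(\delta)\le 2\sigma_k L/(1+2\sigma_k L)$ holding uniformly in $\gamma_k\in[\sigma_k/(1+\sigma_k L),\,\sigma_k]$, and thus lets the coefficient $B(\delta)$ of the error term become non-positive whenever $\sigma_k\le 1/(2L)$. A direct algebraic check, using only $\gamma_k\le\sigma_k$, verifies the bound on $A(\delta_k)$ and gives the first term of the claim.

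For the residual term $B(\delta_k)f_{i_k}^*$, I would split on the sign of $\delta_k$. When $\sigma_k\le 1/(2L)$, one has $\delta_k\le 0$ and since $\sigma_k-\gamma_k\ge 0$ the contribution is non-positive, so the term can be discarded. Otherwise $\delta_k>0$, and substituting the worst case $\gamma_k\ge \sigma_k/(1+\sigma_k L)$ from Lemma~\ref{lemma:step_bounds_decreasing} gives the tight estimate $\frac{2\sigma_k^2 L}{1+\sigma_k L}\cdot\frac{2\sigma_k L-1}{2\sigma_k L+1}\cdot f_{i_k}^*$. Wrapping the two cases together through $\max\{0,(2\sigma_k L-1)/(2\sigma_k L+1)\}$ recovers the second term in the statement. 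The main and essentially only obstacle is the bookkeeping of verifying that no step in the constant-$\sigma$ proof silently used time-independence of $\sigma$; since every inequality invoked (smoothness, the step-size range, and the fundamental equality) is local in $k$ and holds with $\sigma$ replaced pointwise by $\sigma_k$, this is indeed just a relabelling, as the authors signal by stating that ``no additional proof is required.''
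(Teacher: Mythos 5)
Your proposal is correct and matches the paper's own (implicit) proof: the authors simply state that the time-dependent lemmas ``hold with no additional proof required,'' precisely because the constant-$\sigma$ argument is pointwise in $k$ and carries over upon replacing $\sigma$ by $\sigma_k$, which is exactly the observation you spell out and verify. The only small inaccuracy is calling the $(1-\delta)/\delta$ mixing a ``convex combination'' when $\delta$ is allowed to be negative; what is actually used is that the first relation is an inequality scaled by $1-\delta\ge 0$ and the second is an equality that may be scaled by any real $\delta\le 1$, so the step remains valid.
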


We present proof for the convex setting. The nonconvex case is very similar and uses the same techniques; therefore, we omit it. The strongly convex setting can also be easily derived using e.g. the techniques in~\cite{lacoste2012simpler}.

\begin{theorem}[NGN, convex, decreasing $\sigma_k$]
\label{thm:stoc_cvx_dec}
Let $f =\frac{1}{N}\sum_{i=1}^{N} f_i$, where each $f_i:\R^d\to\R$ is non-negative, $L$-smooth and convex. 
Consider the SGD method~\eqref{eq:sgd} with the stepsize~\eqref{eq:NGN_app_dec}.
For any value of $\sigma_0>0$,
setting $\sigma_k = \sigma_0/\sqrt{k+1}$ leads to the following rate: for $K\ge 2$,
\begin{equation}
    \mathbf{E}[f(\bar x^K)-f(x^*)]\le  \frac{C_1\|x^0-x^*\|^2}{\sqrt{K}-1} + \frac{C_1C_2 \ln(K+1)}{\sqrt{K}-1} = O\left(\frac{\ln(K)}{\sqrt{K}}\right).
\end{equation}
where $\bar x^K = \sum_{k=0}^{K-1} p_{k,K} x^k$ with $p_{k,K}=\frac{\sigma_k}{\sum_{k=0}^{K-1}\sigma_k}$ and
\begin{equation*}
    C_1 = \frac{(1+2\sigma_0 L)(1+\sigma_0 L)}{4\sigma_0},\qquad C_2 = \left[6 \Delta_{\text{int}} + 2 \max\left\{0,2\sigma_0 L-1\right\} \Delta_{\text{pos}}\right] L \sigma_0^2. 
\end{equation*}
\vspace{-2mm}
\end{theorem}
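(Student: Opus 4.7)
The plan is to lift the fixed-$\sigma$ argument underlying Theorem~\ref{thm:stoc_cvx} to the time-varying setting by replacing every occurrence of $\sigma$ by $\sigma_k$. All local inequalities remain valid thanks to Lemmas~\ref{lemma:step_bounds_decreasing}, \ref{lemma:NGN_lemma_decreasing}, and~\ref{lemma:fundamental_NGN_decreasing}: in particular, the decomposition $\gamma_k=\rho_k+\epsilon_k$ with $\rho_k=\sigma_k/[(1+2\sigma_k L)(1+\sigma_k L)]$ and $\epsilon_k\le 3L\sigma_k^2/(1+2\sigma_k L)$ goes through verbatim, yielding the per-iteration inequality
$$\|x^{k+1}-x^*\|^2-\|x^k-x^*\|^2 \le -T_0(\sigma_k)\,[f_{i_k}(x^k)-f_{i_k}(x^*)] + T_1(\sigma_k^2)\,[f_{i_k}(x^*)-f_{i_k}^*] + T_2(\sigma_k^2)\,f_{i_k}^*,$$
with $T_0,T_1,T_2$ exactly as defined in \eqref{eq:T_0}--\eqref{eq:T_2}.

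Next, I would take expectations, apply the tower property, telescope over $k=0,\dots,K-1$, and use $\mathbf{E}\|x^K-x^*\|^2\ge 0$ to obtain
$$\sum_{k=0}^{K-1} T_0(\sigma_k)\,\mathbf{E}[f(x^k)-f(x^*)] \le \|x^0-x^*\|^2 + \sum_{k=0}^{K-1}\bigl[T_1(\sigma_k^2)\Delta_{\text{int}}+T_2(\sigma_k^2)\Delta_{\text{pos}}\bigr].$$
The pivotal observation is that $T_0(\sigma_k)/\sigma_k=2/[(1+2\sigma_k L)(1+\sigma_k L)]$ is a decreasing function of $\sigma_k$, so $\sigma_k\le\sigma_0$ gives the clean uniform bound $T_0(\sigma_k)\ge \sigma_k/(2\sigma_0 C_1)$. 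This motivates Jensen's inequality with the weights $p_{k,K}=\sigma_k/S_K$ used in the theorem statement, where $S_K=\sum_{k=0}^{K-1}\sigma_k$, giving the lower bound $\sum_k T_0(\sigma_k)\mathbf{E}[f(x^k)-f(x^*)] \ge S_K/(2\sigma_0 C_1)\cdot\mathbf{E}[f(\bar x^K)-f(x^*)]$.

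It then remains to estimate three explicit sums. First, a standard integral comparison gives $S_K=\sigma_0\sum_{k=0}^{K-1}(k+1)^{-1/2}\ge 2\sigma_0(\sqrt{K}-1)$. Second, using $\sigma_k\le\sigma_0$, one has $T_1(\sigma_k^2)\le 6L\sigma_k^2$ and $T_2(\sigma_k^2)\le 2L\sigma_k^2\max\{0,2\sigma_0 L-1\}$, so each summand in the error is bounded by $C_2\sigma_k^2/\sigma_0^2=C_2/(k+1)$, and the harmonic sum contributes the $\ln(K+1)$ factor. Plugging these into the chain of inequalities produces the announced rate. The main, rather mild, subtlety is choosing Jensen weights proportional to $\sigma_k$ (as opposed to $T_0(\sigma_k)$): this decouples the adaptive factor from the averaging weights and is what allows the clean $1/(\sqrt{K}-1)$ horizon dependence to emerge from $S_K$ alone, with no need for strong convexity, bounded iterates, or any knowledge of $L$.
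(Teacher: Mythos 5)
Your proposal is correct and follows essentially the same route as the paper's own proof: lift the fixed-$\sigma$ inequality to the time-varying case via the three time-dependent lemmas, telescope, bound $T_0(\sigma_k)$ from below using $\sigma_k\le\sigma_0$ (the paper's $\tilde T_0$ is exactly your $\sigma_k/(2\sigma_0 C_1)$), apply Jensen with weights $p_{k,K}=\sigma_k/S_K$, and close with the integral bound on $S_K$ and the harmonic-sum bound on the error. The observation you call ``pivotal'' --- that one should weight by $\sigma_k$ rather than by $T_0(\sigma_k)$ --- is precisely what the paper does through the normalization of $\tilde T_0$, so there is no genuine difference in the argument.
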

\begin{proof}
Using Lemmas~\ref{lemma:step_bounds_decreasing}, \ref{lemma:NGN_lemma_decreasing} and \ref{lemma:fundamental_NGN_decreasing} and following the same exact steps as in the proof of Theorem~\ref{thm:stoc_cvx}, we arrive at
\begin{equation*}
\|x^{k+1}-x^*\|^2- \|x^k-x^*\|^2\le -T_0(\sigma_k) [f_{i_k}(x^k)-f_{i_k}(x^*)] + T_1(\sigma^2_k)[f_{i_k}(x^*)-f_{i_k}^*]  +T_2(\sigma^2_k) f_{i_k}^*,
\end{equation*}
with 
\begin{align*}
    T_0(\sigma_k) &= \frac{2\sigma_k}{(1+2\sigma_k L)(1+\sigma_k L)},\\
    T_1(\sigma_k^2) &= \frac{6 L \sigma_k^2}{1+2\sigma_k L},\\
    T_2(\sigma_k^2) &=  \frac{2\sigma_k^2 L}{1+\sigma_k L} \cdot\max\left\{0,\frac{2\sigma_k L-1}{2\sigma_k L+1}\right\}
\end{align*}
Taking the expectation, we get the compact formula
\begin{equation*}
    T_0(\sigma_k) \mathbf{E}[f(x^k)-f(x^*)]\le -\mathbf{E}\left[\|x^{k+1}-x^*\|^2 - \|x^k-x^*\|^2\right] + E_k(\sigma^2_k),
\end{equation*}
where $E_k(\sigma_k^2)$ is the expected error at step $k$:
\begin{align}
    E_k(\sigma^2_k)&:=T_1(\sigma^2_k) \mathbf{E}[f_{i_k}(x^*)-f_{i_k}^*] +T_2(\sigma^2_k) \mathbf{E}[f_{i_k}^*]\nonumber\\
    &\le T_1(\sigma^2_k) \Delta_{\text{int}} +T_2(\sigma^2_k) \Delta_{\text{pos}}.
\label{eq:nonconvex-Ek}
\end{align}
Recall the definitions $\Delta_{\text{int}}:=\mathbf{E}[f_{i}(x^*)-f_{i}^*]$ and $\Delta_{\text{pos}}:=\mathbf{E}[f_{i}^*]$. 

Following standard techniques~\citep{garrigos2023handbook}, summing over $k$ and using telescopic cancellation gives
\begin{equation}
    \sum_{k=0}^{K-1}T_0(\sigma_k) \mathbf{E}[f(x^k)-f(x^*)]\le  \|x^0-x^*\|^2 + \sum_{k=0}^{K-1}E_k(\sigma^2_k) ,
\end{equation}
Let us now construct a pointwise lower bound $\tilde T_0(\sigma_k)$ to $T_0(\sigma_k)$, using the fact that $\sigma_k$ is decreasing:
\begin{equation*}
        T_0(\sigma_k) = \frac{2\sigma_k}{(1+2\sigma_k L)(1+\sigma_k L)}\ge \frac{2\sigma_k}{(1+2\sigma_0 L)(1+\sigma_0 L)} =:\tilde T_0(\sigma_k).
\end{equation*}
Then we have
\begin{equation*}
    \sum_{k=0}^{K-1}\tilde T_0(\sigma_k) \mathbf{E}[f(x^k)-f(x^*)]\le  \|x^0-x^*\|^2 + \sum_{k=0}^{K-1}E_k(\sigma^2_k) ,
\end{equation*}

Let us divide every term in the above inequality by $\sum_{k=0}^{K-1}\tilde T_0(\sigma_k)$:
\begin{equation}\label{eq:nonconvex-the-bound}
    \sum_{k=0}^{K-1}\left(\frac{\tilde T_0(\sigma_k)}{\sum_{k=0}^{K-1}\tilde T_0(\sigma_k)}\right) \mathbf{E}[f(x^k)-f(x^*)]\le  \frac{\|x^0-x^*\|^2}{\sum_{k=0}^{K-1}\tilde T_0(\sigma_k)} + \frac{\sum_{k=0}^{K-1}E_k(\sigma^2_k)}{\sum_{k=0}^{K-1}\tilde T_0(\sigma_k)}.
\end{equation}
Using an integral bound, we have, for $K\ge 2$,
\begin{equation*}
    \sum_{k=0}^{K-1}\frac{1}{\sqrt{k+1}}\ge \int_{1}^K\frac{1}{\sqrt{t}}dt = 2(\sqrt{K}-1).
\end{equation*}
Therefore, we have 
\begin{align*}
    \sum_{k=0}^{K-1}\tilde T_0(\sigma_k) &= \sum_{k=0}^{K-1}\frac{2\sigma_k}{(1+2\sigma_0 L)(1+\sigma_0 L)}\\
    &= \frac{2\sigma_0}{(1+2\sigma_0 L)(1+\sigma_0 L)}\sum_{k=0}^{K-1}\frac{1}{\sqrt{k+1}}\\
    &\ge \frac{4\sigma_0(\sqrt{K}-1)}{(1+2\sigma_0 L)(1+\sigma_0 L)}.
\end{align*}
Note also that
\begin{equation*}
    \left(\frac{\tilde T_0(\sigma_k)}{\sum_{k=0}^{K-1}\tilde T_0(\sigma_k)}\right)  = \frac{\sigma_k}{\sum_{k=0}^{K-1}\sigma_k}=: p_{k,K},
\end{equation*}
where $p_{k,K}$ as a function of $k$ is a probability distribution over the interval $[0,K-1]$. 

We are left with bounding $\sum_{k=0}^{K-1}E_k(\sigma_k^2)$. Since $\sigma_k$ is decreasing, we have the following bounds:
\begin{align*}
    T_1(\sigma_k^2) &= \frac{6 L \sigma_k^2}{1+2\sigma_k L}\le 6 L \sigma_k^2,\\
    T_2(\sigma_k^2) &=  \frac{2\sigma_k^2 L}{1+\sigma_k L} \cdot\max\left\{0,\frac{2\sigma_k L-1}{2\sigma_k L+1}\right\}\le 2\sigma_k^2 L \cdot \max\left\{0,2\sigma_0 L-1\right\}.
\end{align*}
Next we use a standard bound on the $K$-th Harmonic number $H_K$.
\begin{equation*}
    \sum_{k=0}^{K-1}\frac{1}{k+1} = 1+ \frac{1}{2} + \frac{1}{3} +\dots + \frac{1}{K} = H_K\le \ln(K+1).
\end{equation*}
Therefore,
\begin{equation*}
    \sum_{k=0}^{K-1}T_1(\sigma_k^2) \le 6 L \sum_{k=0}^{K-1}\sigma_k^2 = 6 L \sigma_0^2\sum_{k=0}^{K-1}\frac{1}{k+1}\le 6 L \sigma_0^2 \ln(K+1),
\end{equation*}
and
\begin{equation*}
    \sum_{k=0}^{K-1}T_2(\sigma_k^2) \le 2 L \cdot \max\left\{0,2\sigma_0 L-1\right\}\sum_{k=0}^{K-1}\sigma_k^2\le2 L \cdot \max\left\{0,2\sigma_0 L-1\right\}\sigma_0^2\ln(K+1).
\end{equation*}
All in all, we substitute the above bounds into~\eqref{eq:nonconvex-Ek} to obtain
\begin{align*}
    \sum_{k=0}^{K-1}E_k(\sigma^2_k)&\le  \Delta_{\text{int}}\sum_{k=0}^{K-1}T_1(\sigma^2_k)  + \Delta_{\text{pos}}\sum_{k=0}^{K-1}T_2(\sigma^2_k) \\
    &\le  \left[6 \Delta_{\text{int}} + 2 \max\left\{0,2\sigma_0 L-1\right\} \Delta_{\text{pos}}\right] L \sigma_0^2 \ln(K+1).
\end{align*}
Plugging everything back into the bound~\eqref{eq:nonconvex-the-bound}, we have, for $K\ge 2$
\begin{equation*}
    \sum_{k=0}^{K-1} p_{k,K} \mathbf{E}[f(x^k)-f(x^*)]\le  \frac{C_1\|x^0-x^*\|^2}{\sqrt{K}-1} + \frac{C_1C_2 \ln(K+1)}{\sqrt{K}-1},
\end{equation*}
with
\begin{equation*}
    C_1 = \frac{(1+2\sigma_0 L)(1+\sigma_0 L)}{4\sigma_0},\qquad C_2 = \left[6 \Delta_{\text{int}} + 2 \max\left\{0,2\sigma_0 L-1\right\} \Delta_{\text{pos}}\right] L \sigma_0^2. 
\end{equation*}
To conclude, let $\bar x^K = \sum_{k=0}^{K-1} p_{k,K} x^k$. Jensen's inequality implies
\begin{equation*}
    f(\bar x^K) = f\left(\sum_{k=0}^{K-1} p_{k,K} x^k\right)\le \sum_{k=0}^{K-1} p_{k,K}f(x^k).
\end{equation*}
This concludes the proof.
\end{proof}

\subsection{Deep learning experiments}
\label{sec:experiments}


In this section, we test the performance of NGN on deep convolutional  networks. We consider the following settings with increasing complexity:
\begin{itemize}
    \item A 12 layers (7 convolutional layers and 5 max-pooling layers, with batch normalization and dropout) neural network trained on the Street View House Numbers (SVHN) Dataset~\citep{netzer2011reading} for 50 epochs with a batch size 512;
    \item A ResNet18~\citep{he2016deep} network\footnote{We use code from the popular repository~\url{https://github.com/kuangliu/pytorch-cifar}.} trained on CIFAR10~\citep{krizhevsky2009learning} for 50 epochs with a batch size 128. 
    \item A ResNet50~\citep{he2016deep} network\footnote{We use the official PyTorch repo \url{https://github.com/pytorch/examples/tree/main/imagenet}.} trained on Imagenet~\citep{deng2009imagenet} for 30 epochs with a batch size 256.
\end{itemize}

All our experiments are performed on NVIDIA V100 GPUs. Reported are mean and confidence interval\footnote{Twice the standard deviation over the square root of number of samples, in our case three.} bars over three seeds for training loss and test accuracy. In Figure~\ref{fig:nn_res} we compare NGN with SGD and Adam~\citep{kingma2014adam}, tuning $\sigma$ in NGN and the learning rate in SGD and Adam. All other parameters in Adam~($\beta_1$ and $\beta_2$) are, as usual in practice, kept constant. While Adam is used with momentum $\beta_1=0.9$, we show here the performance of SGD without momentum to draw a better comparison with our NGN. We also do not make use of L2 regularization as this would bias results on the training speed. All hyperparameters are kept constant during training for SVHN and CIFAR10, while for our largest scale experiment~(Imagenet), we decrease $\sigma$ and learning rates 10-fold every 10 epochs. In Figure~\ref{fig:nn_res2} we show the same NGN statistics as in Figure~\ref{fig:nn_res}, but compare with two adaptive stepsizes with strong guarantees: SPS$_{\text{max}}$ and Adragrad-norm. For SPS: $\gamma_k = \min\left\{\frac{f_{i_k}(x^k)-f_{i_k}^*}{c\|\nabla f(x^k)_{i_k}\|^2},\gamma_b\right\}$ we fix $c=1$ as motivated by the theory in~\cite{loizou2021stochastic} and tune $\gamma_b$. For Adagrad-norm $\gamma_k = \eta/\sqrt{b_0^2+\sum_{j=0}^k \|\nabla f_{i_j}(x_j)\|^2}$ we fix $b_0=1e-2$ and tune $\eta$.

\begin{table}
\centering
{
\small
\begin{tabular}{@{}llccccc@{}}
\toprule
\multirow{2}{*}{Setting} & \multirow{2}{*}{Optimizer} & \multicolumn{5}{c}{\textbf{Hyperparameter}} \\
                         &                            & \#0      & \#1 (\td)    & \#2 (\cc)    & \#3 (\tu)   & \#4     \\ \midrule
\multirow{3}{*}{SVHN + Small Convnet}    & SGD                        & 0.01   & 0.03   &\textbf{ 0.1}   & 0.3  & 1     \\
                         & Adam                       & 0.00003& 0.0001 & \textbf{0.0003}& 0.001& 0.003 \\
                         & Adagrad-norm                        & 1    & 3      & \textbf{10 }    & 30   & 100    \\
                         & SPS                        & 0.3    & 1      & \textbf{3 }    & 10   & 30    \\                   
                         & NGN                        & 0.3    & 1      & \textbf{3 }    & 10   & 30    \\ \midrule
\multirow{3}{*}{CIFAR10 + ResNet18} & SGD                        & 0.03   & 0.1    & \textbf{0.3}   & 1    & 3     \\
                         & Adam                       & 0.00003& 0.0001 & \textbf{0.0003}& 0.001& 0.003 \\
                         & Adagrad-norm                        & 1    & 3      & \textbf{10 }    & 30   & 100    \\
                         & SPS                       &0.1 & 0.3    & \textbf{1 }     & 3    & 10  \\     
                         & NGN                        & 0.3    & 1      & \textbf{3}     & 10   & 30    \\ \midrule
\multirow{3}{*}{Imagenet + ResNet50}& SGD                        & - & 0.3    & \textbf{1 }     & 3     & - \\
                         & Adam                       & - & 0.0003 & \textbf{0.001}  & 0.003 & - \\
                         & Adagrad-norm                      & - & 100 & \textbf{300}  & 1000 & - \\
                         & SPS                       & - & 1 & \textbf{3}  & 10 & - \\
                         & NGN                        & - & 3      & \textbf{10 }    & 30    & - \\ \bottomrule
\end{tabular}}
\caption{Hyperparameters~($\sigma$ for NGN, learning rate for Adam and SGD) by dataset and optimization algorithm, results in Figure~\ref{fig:nn_res}.}
\label{tb:hyper}
\end{table}

As our main objective here is to show how NGN can adapt to the landscape and converge faster than SGD for different values of its hyperparameter, we first grid-search the optimal hyperparameters for all methods on a large logarithmically-spaced grid $[1e-5, 3e-5, 1e-4,\dots, 3, 10, 30]$, and then show results for one/two choices smaller and bigger hyperparameters. Table~\ref{tb:hyper} lists the hyperparameter values used for each method and architecture, where hyperparameter $\# 2$~(in bold) is the best-performing for each method as indicated in the last-train-step train accuracy plots (second column of Figure~\ref{fig:nn_res} and~\ref{fig:nn_res2}).

\begin{figure}[p]
    \centering
\includegraphics[width=0.99\linewidth]{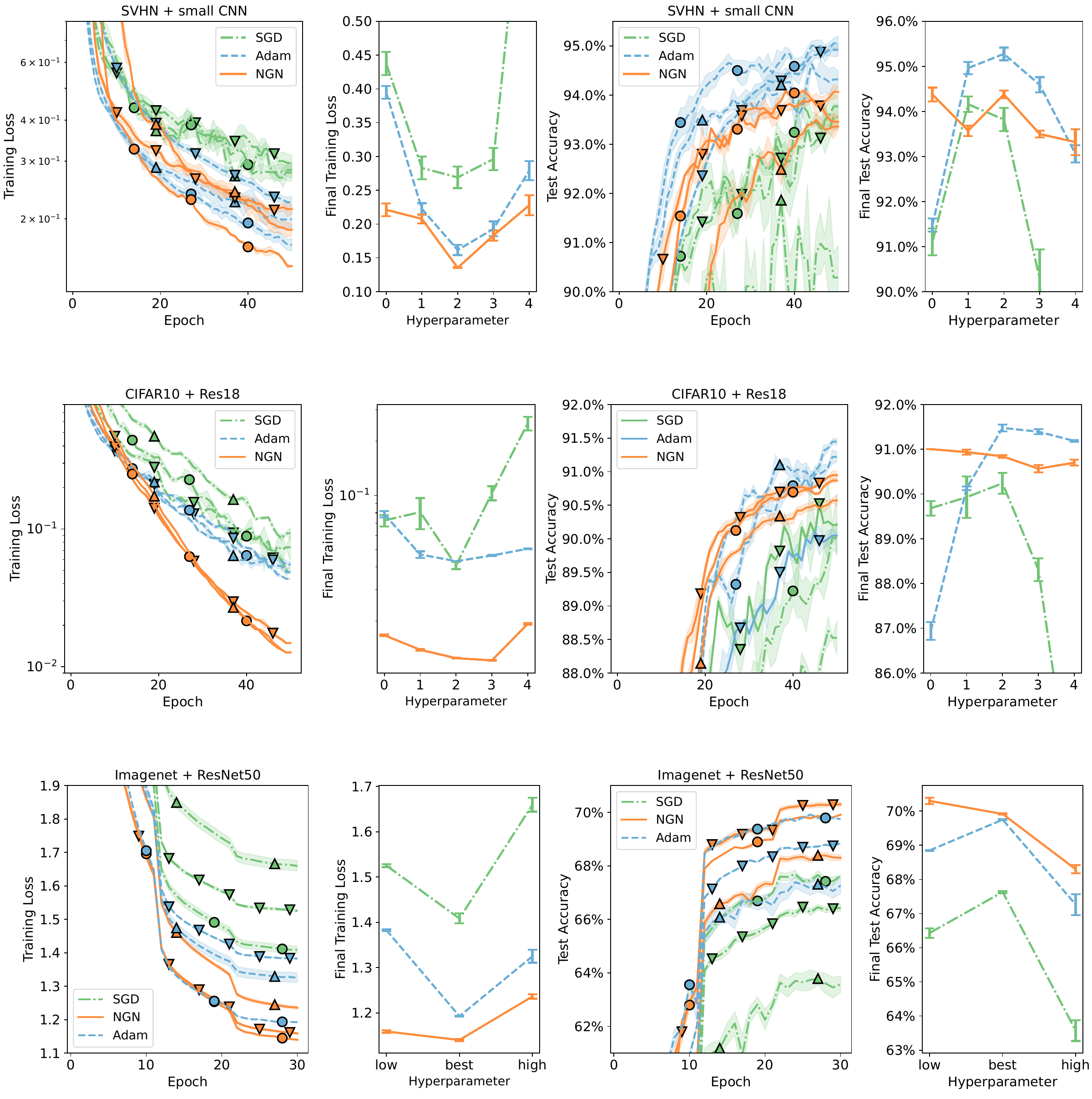}
    \caption{\textbf{(SGD vs. Adam vs. NGN)} Experimental results on Deep Neural Networks~(stochastic gradients). All details and comments can be found in the text. Shown is performance for five or three hyperparameters~(Table~\ref{tb:hyper}), each method is tuned to best at hyperparameter \#2.}
    \label{fig:nn_res}
\end{figure}

\begin{figure}[p]
    \centering
\includegraphics[width=0.99\linewidth]{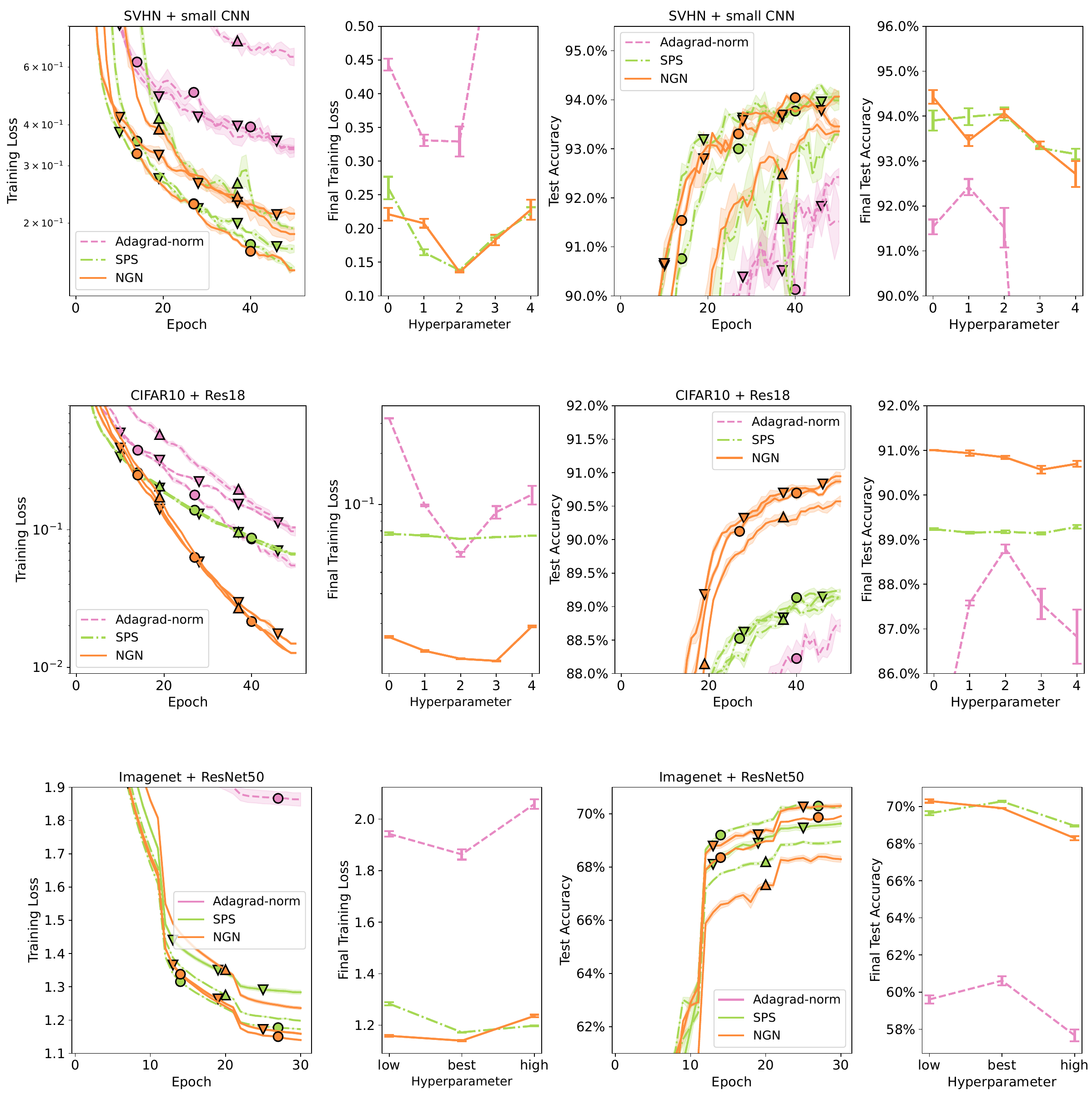}
    \caption{\textbf{(Adagrad-norm vs. SPS vs. NGN)} Experimental results on Deep Neural Networks~(stochastic gradients). All details and comments can be found in the text. Shown is performance for five or three hyperparameters~(Table~\ref{tb:hyper}), each method is tuned to best at hyperparameter \#2.}
    \label{fig:nn_res2}
\end{figure}

While in the second and fourth column of Figure~\ref{fig:nn_res} we show the last-iterate statistics for all hyperparameters in Table~\ref{tb:hyper}, in the first and third column we plot the dynamics for the three best-performing hyperparameters, and we mark them as \td~(hyperparameter $\#1$), \cc~(hyperparameter $\#2$), and \tu~(hyperparameter $\#3$). We now comment on the results in Figure~\ref{fig:nn_res}, i.e. on the comparison with SGD and Adam:
\begin{itemize}
    \item \textit{NGN always performs best in terms of training loss performance}. This holds not only under optimal tuning (hyperparameter $\#2$, \cc) but also across all suboptimal hyperparameters $\#0,\#1, \#3, \#4$, as can be seen in the second column of Figure~\ref{fig:nn_res}: orange curve lower bounds blue and green curves.
    \item \textit{The faster training performance of NGN is more stable across hyperparameters} compared to SGD and Adam. This can be especially seen in ResNets, both on CIFAR10 and Imagenet. Note that the value $\sigma=3$ performs almost optimally for all tasks.
    \item \textit{The test performance of NGN is more robust to tuning} compared to SGD and Adam, as can be seen in the right-most panel in Figure~\ref{fig:nn_res}. However, while on Imagenet, test performance is best for NGN, Adam is superior on SVHN and CIFAR10, with a gap of $0.5-1\%$.
    \item \textit{NGN is more efficient than Adam}: the Adam optimizer maintains second-order statistics for each parameter~\citep{anil2019memory}. This, in large models where GPUs need to be filled to maximum, introduces significant memory overheads that restrict the size of the model being used as well as the number of examples in a mini-batch. NGN, instead, has nearly the same wall-clock and memory complexity as SGD.
\end{itemize}

Although here we did not test the effect of L2 regularization in the interest of avoiding confounders, the promising training speed results makes us believe that NGN, equipped with generalization boosting strategies, such as SAM \citep{foret2020sharpness}, Noise Injection~\citep{orvieto2022anticorrelated, orvieto2023explicit}, or simply decoupled L2 regularization~\citep{loshchilov2017decoupled}, can lead to best generalization performance combined with faster training.

In Figure~\ref{fig:nn_res2} we instead compare NGN with tuned SPS and Adagrad-norm. While we found that Adagrad-norm cannot deliver solid performance, SPS behaves at times similarly to NGN. This is not surprising, since there is a strict relation between SPS and NGN. However, we found that NGN performs drastically better on the ResNet18 setting, with an edge also on Imagenet in train loss.
 
\begin{figure}[t]
    \centering
\includegraphics[width=0.99\linewidth]{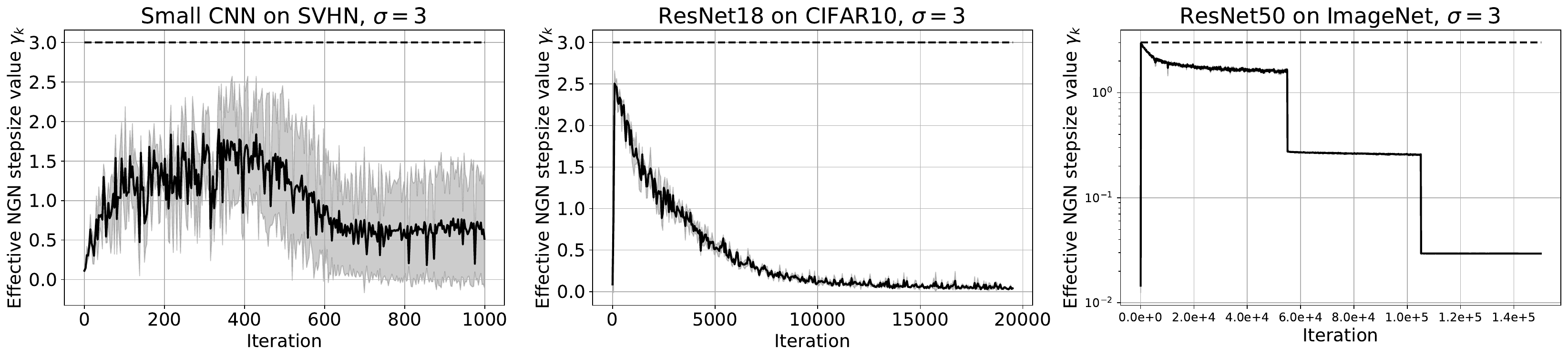}
    \caption{Effective NGN stepsize in the deep learning experiments of Figure~\ref{fig:nn_res}. Shown is mean and 1 standard deviation of $\gamma_k$ for the best-performing $\sigma$.}
    \label{fig:warmup_decay}
\end{figure}

\paragraph{Effective learning rate.} To conclude, we study in Figure~\ref{fig:warmup_decay} the learning rate dynamics in NGN for $\sigma=3$~(uniformly close to optimal performance) in the context of the experiments in this section. On SVHN and CIFAR10, we used vanilla NGN with constant $\sigma$, and found that the stepsize $\gamma_k$ has a peculiar warmup-decay behavior. Interestingly, the duration of the warm-up phase coincides with the usual practice in deep learning~\citep{geiping2023cramming}. Note that while the learning rate decays automatically after warmup, it does not anneal to very small values -- which is needed to counteract variance of stochastic gradients in the large dataset setting. In our Imagenet experiment, supported by the theory in Section~\ref{sec:proofs_annealed_sigma} ,we therefore automatically decay $\sigma$ 10-fold every 30 epochs. The performance of NGN shines in the first 30 epochs exhibiting again a warmup-decay behavior. The learning rate after the first epoch is closer to the one proposed by SGD, but convergence is faster due to the initial highly-adaptive phase.

\section{Conclusion and future work}
\label{sec:conclusion}
In this paper, we introduced NGN, an adaptive optimizer with the same memory requirements and per-iteration cost as SGD. We showed that this algorithm has intriguing and unique curvature estimation properties, leading to features such as stability to hyperparameter tuning and adaptive estimation of the Lipschitz constant.  Our theory supports the empirical findings and the rationale behind the NGN derivation. In the convex setting, NGN is guaranteed to converge with a sublinear rate to any \textit{arbitrarily small neighborhood} of the solution without requiring access to the gradient Lipschitz constant~(as instead needed in SGD) or access to the minimizer value of $f$~(as instead needed in SPS$_{\max}$~\citep{loizou2021stochastic}). Among our results, our rates guarantee adaptive convergence to an arbitrarily small neighborhood with optimal speed in the strongly convex setting, a guarantee that is missing for both Adagrad and Polyak stepsize-based methods. NGN does not require access to the value of the loss at the solution, and by decreasing its hyperparameter~$\sigma$, NGN becomes closer to the SGD update. Our empirical results on neural networks show that NGN is stronger than vanilla SGD, SPS, and Adagrad and can be competitive with Adam even under heavy hyperparameter tuning.

Our work also leaves open many important questions for future research.
For example, it is possible to obtain better convergence rates in the nonconvex case under the Polyak-{\L}ojasiewicz condition \citep[e.g.,][]{Karimi2016PL}.
A very interesting question is how we can incorporate momentum into the NGN update rule and derive improved convergence rates. Along this direction, there are two recent works on using momentum together with the stochastic Polyak stepsize: \cite{wang2023generalized} and \cite{oikonomou2024stochastic}, and we believe that similar extensions for NGN are also possible. 
And finally, it will be very interesting to investigate the (block-) diagonal variant of NGN, where each coordinate (or block coordinates) has its own NGN type of stepsize.   
The coordinate- or block-adaptive variants may help boost performance for training transformers where each parameter group has different curvature property \citep{noci2022signal}.

\bibliographystyle{apalike}
\bibliography{main}

\begin{thebibliography}{}

\bibitem[Anil et~al., 2019]{anil2019memory}
Anil, R., Gupta, V., Koren, T., and Singer, Y. (2019).
\newblock Memory efficient adaptive optimization.
\newblock {\em Advances in Neural Information Processing Systems}, 32.

\bibitem[Berrada et~al., 2020]{berrada2020training}
Berrada, L., Zisserman, A., and Kumar, M.~P. (2020).
\newblock Training neural networks for and by interpolation.
\newblock In {\em International conference on machine learning}, pages 799--809. PMLR.

\bibitem[Bottou et~al., 2018]{bottou2018optimization}
Bottou, L., Curtis, F.~E., and Nocedal, J. (2018).
\newblock Optimization methods for large-scale machine learning.
\newblock {\em SIAM review}, 60(2):223--311.

\bibitem[Chang and Lin, 2011]{chang2011libsvm}
Chang, C.-C. and Lin, C.-J. (2011).
\newblock Libsvm: a library for support vector machines.
\newblock {\em ACM transactions on intelligent systems and technology (TIST)}, 2(3):1--27.

\bibitem[Chen, 2011]{chen2011hessian}
Chen, P. (2011).
\newblock Hessian matrix vs. gauss--newton hessian matrix.
\newblock {\em SIAM Journal on Numerical Analysis}, 49(4):1417--1435.

\bibitem[Cutkosky, 2020]{cutkosky2020free}
Cutkosky, A. (2020).
\newblock Parameter-free, dynamic, and strongly-adaptive online learning.
\newblock In {\em Proceedings of the 37th International Conference on Machine Learning}, volume 119 of {\em Proceedings of Machine Learning Research}, pages 2250--2259. PMLR.

\bibitem[Defazio et~al., 2023]{defazio2023when}
Defazio, A., Cutkosky, A., Mehta, H., and Mishchenko, K. (2023).
\newblock When, why and how much? adaptive learning rate scheduling by refinement.
\newblock arXiv:2310.07831.

\bibitem[Defazio and Mishchenko, 2023]{defazio2023learning}
Defazio, A. and Mishchenko, K. (2023).
\newblock Learning-rate-free learning by d-adaptation.
\newblock In {\em Proceedings of the 40th International Conference on Machine Learning}, volume 202 of {\em Proceedings of Machine Learning Research}, pages 7449--7479. PMLR.

\bibitem[Defazio et~al., 2024]{defazio2024road}
Defazio, A., Xingyu, Yang, Mehta, H., Mishchenko, K., Khaled, A., and Cutkosky, A. (2024).
\newblock The road less scheduled.
\newblock arXiv:2405.15682.

\bibitem[Delyon and Juditsky, 1993]{DelyonJuditsky93}
Delyon, B. and Juditsky, A. (1993).
\newblock Accelerated stochastic approximation.
\newblock {\em SIAM Journal on Optimization}, 3(4):868--881.

\bibitem[Deng et~al., 2009]{deng2009imagenet}
Deng, J., Dong, W., Socher, R., Li, L.-J., Li, K., and Fei-Fei, L. (2009).
\newblock Imagenet: A large-scale hierarchical image database.
\newblock In {\em 2009 IEEE conference on computer vision and pattern recognition}, pages 248--255. Ieee.

\bibitem[Duchi et~al., 2011]{duchi2011adaptive}
Duchi, J., Hazan, E., and Singer, Y. (2011).
\newblock Adaptive subgradient methods for online learning and stochastic optimization.
\newblock {\em Journal of machine learning research}, 12(7).

\bibitem[Faw et~al., 2023]{faw2023beyond}
Faw, M., Rout, L., Caramanis, C., and Shakkottai, S. (2023).
\newblock Beyond uniform smoothness: A stopped analysis of adaptive sgd.
\newblock In {\em The Thirty Sixth Annual Conference on Learning Theory}, pages 89--160. PMLR.

\bibitem[Faw et~al., 2022]{faw2022power}
Faw, M., Tziotis, I., Caramanis, C., Mokhtari, A., Shakkottai, S., and Ward, R. (2022).
\newblock The power of adaptivity in sgd: Self-tuning step sizes with unbounded gradients and affine variance.
\newblock In {\em Conference on Learning Theory}, pages 313--355. PMLR.

\bibitem[Foret et~al., 2020]{foret2020sharpness}
Foret, P., Kleiner, A., Mobahi, H., and Neyshabur, B. (2020).
\newblock Sharpness-aware minimization for efficiently improving generalization.
\newblock {\em arXiv preprint arXiv:2010.01412}.

\bibitem[Garrigos and Gower, 2023]{garrigos2023handbook}
Garrigos, G. and Gower, R.~M. (2023).
\newblock Handbook of convergence theorems for (stochastic) gradient methods.
\newblock {\em arXiv preprint arXiv:2301.11235}.

\bibitem[Geiping and Goldstein, 2023]{geiping2023cramming}
Geiping, J. and Goldstein, T. (2023).
\newblock Cramming: Training a language model on a single gpu in one day.
\newblock In {\em International Conference on Machine Learning}, pages 11117--11143. PMLR.

\bibitem[Ghadimi and Lan, 2013]{ghadimi2013stochastic}
Ghadimi, S. and Lan, G. (2013).
\newblock Stochastic first-and zeroth-order methods for nonconvex stochastic programming.
\newblock {\em SIAM journal on optimization}, 23(4):2341--2368.

\bibitem[Hazan and Kakade, 2019]{hazan2019revisiting}
Hazan, E. and Kakade, S. (2019).
\newblock Revisiting the polyak step size.
\newblock {\em arXiv preprint arXiv:1905.00313}.

\bibitem[He et~al., 2016]{he2016deep}
He, K., Zhang, X., Ren, S., and Sun, J. (2016).
\newblock Deep residual learning for image recognition.
\newblock In {\em Proceedings of the IEEE conference on computer vision and pattern recognition}, pages 770--778.

\bibitem[Hoffmann et~al., 2022]{hoffmann2022training}
Hoffmann, J., Borgeaud, S., Mensch, A., Buchatskaya, E., Cai, T., Rutherford, E., Casas, D. d.~L., Hendricks, L.~A., Welbl, J., Clark, A., et~al. (2022).
\newblock Training compute-optimal large language models.
\newblock {\em arXiv preprint arXiv:2203.15556}.

\bibitem[Jacobs, 1988]{Jacobs88DBD}
Jacobs, R.~A. (1988).
\newblock Increased rates of convergence through learning rate adaption.
\newblock {\em Neural Networks}, 1:295--307.

\bibitem[Karimi et~al., 2016]{Karimi2016PL}
Karimi, H., Nutini, J., and Schmidt, M. (2016).
\newblock Linear convergence of gradient and proximal-gradient methods under the {Polyak}-{\l}ojasiewicz condition.
\newblock In Frasconi, P., Landwehr, N., Manco, G., and Vreeken, J., editors, {\em Machine Learning and Knowledge Discovery in Databases (ECML PKDD 2016)}, volume 9851 of {\em Lectur Notes in Computer Sciencce}. Springer.

\bibitem[Kesten, 1958]{Kesten58}
Kesten, H. (1958).
\newblock Accelerated stochastic approximation.
\newblock {\em Annals of Mathematical Statistics}, 29(1):41--59.

\bibitem[Kingma and Ba, 2014]{kingma2014adam}
Kingma, D.~P. and Ba, J. (2014).
\newblock Adam: A method for stochastic optimization.
\newblock {\em arXiv preprint arXiv:1412.6980}.

\bibitem[Krizhevsky et~al., 2009]{krizhevsky2009learning}
Krizhevsky, A., Hinton, G., et~al. (2009).
\newblock Learning multiple layers of features from tiny images.

\bibitem[Lacoste-Julien et~al., 2012]{lacoste2012simpler}
Lacoste-Julien, S., Schmidt, M., and Bach, F. (2012).
\newblock A simpler approach to obtaining an o (1/t) convergence rate for the projected stochastic subgradient method.
\newblock {\em arXiv preprint arXiv:1212.2002}.

\bibitem[{Le Cun} et~al., 1998]{lecun98efficient}
{Le Cun}, Y., Bottou, L., Orr, G.~B., and M{\"{u}}ller, K.-R. (1998).
\newblock Efficient backprop.
\newblock In {\em Neural Networks, Tricks of the Trade}, Lecture Notes in Computer Science LNCS~1524. Springer Verlag.

\bibitem[Levenberg, 1944]{levenberg44}
Levenberg, K. (1944).
\newblock A method for the solution of certain non-linear problems in least squares.
\newblock {\em Quarterly of Applied Mathematics}, 2(2):164–168.

\bibitem[Liu et~al., 2023]{liu2023convergence}
Liu, Z., Nguyen, T.~D., Ene, A., and Nguyen, H. (2023).
\newblock On the convergence of adagrad (norm) on $\mathbb{R}^{d}$: Beyond convexity, non-asymptotic rate and acceleration.
\newblock In {\em International Conference on Learning Representations}. International Conference on Learning Representations.

\bibitem[Loizou et~al., 2021]{loizou2021stochastic}
Loizou, N., Vaswani, S., Laradji, I.~H., and Lacoste-Julien, S. (2021).
\newblock Stochastic polyak step-size for sgd: An adaptive learning rate for fast convergence.
\newblock In {\em International Conference on Artificial Intelligence and Statistics}, pages 1306--1314. PMLR.

\bibitem[Loshchilov and Hutter, 2017]{loshchilov2017decoupled}
Loshchilov, I. and Hutter, F. (2017).
\newblock Decoupled weight decay regularization.
\newblock {\em arXiv preprint arXiv:1711.05101}.

\bibitem[Ma et~al., 2018]{ma2018power}
Ma, S., Bassily, R., and Belkin, M. (2018).
\newblock The power of interpolation: Understanding the effectiveness of sgd in modern over-parametrized learning.
\newblock In {\em International Conference on Machine Learning}, pages 3325--3334. PMLR.

\bibitem[Mahmood et~al., 2012]{MahmoodSutton12TuningFree}
Mahmood, A.~R., Sutton, R.~S., Degris, T., and Pilarski, P.~M. (2012).
\newblock Tuning-free step-size adaption.
\newblock In {\em Proceedings of the IEEE International Conference on Acoustics, Speech and Signal Processing (ICASSP)}, pages 2121--2124.

\bibitem[Marquardt, 1963]{marquardt63}
Marquardt, D. (1963).
\newblock An algorithm for least-squares estimation of nonlinear parameters.
\newblock {\em SIAM Journal on Applied Mathematics}, 11(2):431–441.

\bibitem[Mirzoakhmedov and Uryasev, 1983]{MirzoakhmedovUryasev83}
Mirzoakhmedov, F. and Uryasev, S.~P. (1983).
\newblock Adaptive step adjustment for a stochastic optimization algorithm.
\newblock {\em Zh. Vychisl. Mat. Mat. Fiz.}, 23(6):1314--1325.
\newblock [U.S.S.R. Comput. Math. Math. Phys. {23:6}, 1983].

\bibitem[Mishchenko and Defazio, 2024]{mishchenko2024prodigy}
Mishchenko, K. and Defazio, A. (2024).
\newblock Prodigy: An expeditiously adaptive parameter-free learner.
\newblock arXiv:2306.06101.

\bibitem[Nesterov, 2018]{nesterov2018lectures}
Nesterov, Y. (2018).
\newblock {\em Lectures on convex optimization}, volume 137 of {\em Springer Optimization and Its Applications}.
\newblock Springer, second edition.

\bibitem[Netzer et~al., 2011]{netzer2011reading}
Netzer, Y., Wang, T., Coates, A., Bissacco, A., Wu, B., Ng, A.~Y., et~al. (2011).
\newblock Reading digits in natural images with unsupervised feature learning.
\newblock In {\em NIPS workshop on deep learning and unsupervised feature learning}. Granada, Spain.

\bibitem[Nocedal and Wright, 2006]{nocedal06book}
Nocedal, J. and Wright, S.~J. (2006).
\newblock {\em Numerical Optimization}.
\newblock Springer Series in Operations Research. Springer, 2nd edition.

\bibitem[Noci et~al., 2022]{noci2022signal}
Noci, L., Anagnostidis, S., Biggio, L., Orvieto, A., Singh, S.~P., and Lucchi, A. (2022).
\newblock Signal propagation in transformers: Theoretical perspectives and the role of rank collapse.

\bibitem[Oikonomou and Loizou, 2024]{oikonomou2024stochastic}
Oikonomou, D. and Loizou, N. (2024).
\newblock Stochastic polyak step-sizes and momentum: Convergence guarantees and practical performance.
\newblock arXiv:2406.04142.

\bibitem[Orabona and P{\'a}l, 2016]{orabona2016coin}
Orabona, F. and P{\'a}l, D. (2016).
\newblock Coin betting and parameter-free online learning.
\newblock {\em Advances in Neural Information Processing Systems}, 29.

\bibitem[Orabona and Tommasi, 2017]{orabona2017training}
Orabona, F. and Tommasi, T. (2017).
\newblock Training deep networks without learning rates through coin betting.
\newblock {\em Advances in Neural Information Processing Systems}, 30.

\bibitem[Orvieto et~al., 2022a]{orvieto2022anticorrelated}
Orvieto, A., Kersting, H., Proske, F., Bach, F., and Lucchi, A. (2022a).
\newblock Anticorrelated noise injection for improved generalization.
\newblock In {\em International Conference on Machine Learning}, pages 17094--17116. PMLR.

\bibitem[Orvieto et~al., 2022b]{orvieto2022vanishing}
Orvieto, A., Kohler, J., Pavllo, D., Hofmann, T., and Lucchi, A. (2022b).
\newblock Vanishing curvature in randomly initialized deep relu networks.
\newblock In {\em International Conference on Artificial Intelligence and Statistics}.

\bibitem[Orvieto et~al., 2022c]{orvieto2022dynamics}
Orvieto, A., Lacoste-Julien, S., and Loizou, N. (2022c).
\newblock Dynamics of {SGD} with stochastic polyak stepsizes: Truly adaptive variants and convergence to exact solution.
\newblock In {\em Advances in Neural Information Processing Systems}.

\bibitem[Orvieto et~al., 2023]{orvieto2023explicit}
Orvieto, A., Raj, A., Kersting, H., and Bach, F. (2023).
\newblock Explicit regularization in overparametrized models via noise injection.
\newblock In {\em International Conference on Artificial Intelligence and Statistics}, pages 7265--7287. PMLR.

\bibitem[Pedregosa et~al., 2011]{pedregosa2011scikit}
Pedregosa, F., Varoquaux, G., Gramfort, A., Michel, V., Thirion, B., Grisel, O., Blondel, M., Prettenhofer, P., Weiss, R., Dubourg, V., et~al. (2011).
\newblock Scikit-learn: Machine learning in python.
\newblock {\em the Journal of machine Learning research}, 12:2825--2830.

\bibitem[Polyak, 1987]{polyak87introduction}
Polyak, B.~T. (1987).
\newblock {\em Introduction to Optimization}.
\newblock Translation series in mathematics and engineering. Optimization Software, Inc., Publications Division, New York, NY.

\bibitem[Prazeres and Oberman, 2021]{prazeres2021stochastic}
Prazeres, M. and Oberman, A.~M. (2021).
\newblock Stochastic gradient descent with polyak’s learning rate.
\newblock {\em Journal of Scientific Computing}, 89:1--16.

\bibitem[Reddi et~al., 2019]{reddi2019convergence}
Reddi, S.~J., Kale, S., and Kumar, S. (2019).
\newblock On the convergence of adam and beyond.
\newblock {\em arXiv preprint arXiv:1904.09237}.

\bibitem[Rolinek and Martius, 2018]{rolinek2018l4}
Rolinek, M. and Martius, G. (2018).
\newblock L4: Practical loss-based stepsize adaptation for deep learning.
\newblock {\em Advances in neural information processing systems}, 31.

\bibitem[Schmidt et~al., 2021]{schmidt2021descending}
Schmidt, R.~M., Schneider, F., and Hennig, P. (2021).
\newblock Descending through a crowded valley-benchmarking deep learning optimizers.
\newblock In {\em International Conference on Machine Learning}, pages 9367--9376. PMLR.

\bibitem[Schraudolph, 1999]{Schraudolph1999}
Schraudolph, N.~N. (1999).
\newblock Local gain adaptation in stochastic gradient descent.
\newblock In {\em Proceedings of Nineth International Conference on Artificial Neural Networks (ICANN)}, pages 569--574.

\bibitem[Sutskever et~al., 2013]{sutskever13momentum}
Sutskever, I., Martens, J., Dahl, G., and Hinton, G. (2013).
\newblock On the importance of initialization and momentum in deep learning.
\newblock In {\em Proceedings of the 30th International Conference on Machine Learning}, volume~28 of {\em Proceedings of Machine Learning Research}, pages 1139--1147, Atlanta, Georgia, USA. PMLR.

\bibitem[Sutton, 1992]{Sutton92IDBD}
Sutton, R.~S. (1992).
\newblock Adapting bias by gradient descent: An incremental version of {Delta-Bar-Delta}.
\newblock In {\em Proceedings of the Tenth National Conference on Artificial Intelligence (AAAI'92)}, pages 171--176. The MIT Press.

\bibitem[Vaswani et~al., 2020]{vaswani2020adaptive}
Vaswani, S., Laradji, I., Kunstner, F., Meng, S.~Y., Schmidt, M., and Lacoste-Julien, S. (2020).
\newblock Adaptive gradient methods converge faster with over-parameterization (but you should do a line-search).
\newblock {\em arXiv preprint arXiv:2006.06835}.

\bibitem[Wang et~al., 2023a]{wang2023convergence}
Wang, B., Zhang, H., Ma, Z., and Chen, W. (2023a).
\newblock Convergence of adagrad for non-convex objectives: Simple proofs and relaxed assumptions.
\newblock In {\em The Thirty Sixth Annual Conference on Learning Theory}, pages 161--190. PMLR.

\bibitem[Wang et~al., 2023b]{wang2023generalized}
Wang, X., Johansson, M., and Zhang, T. (2023b).
\newblock Generalized polyak step size for first order optimization with momentum.
\newblock In {\em Proceedings of the 40th International Conference on Machine Learning}, volume 202 of {\em Proceedings of Machine Learning Research}, pages 35836--35863. PMLR.

\bibitem[Ward et~al., 2020]{ward2020adagrad}
Ward, R., Wu, X., and Bottou, L. (2020).
\newblock Adagrad stepsizes: Sharp convergence over nonconvex landscapes.
\newblock {\em Journal of Machine Learning Research}, 21(219):1--30.

\bibitem[Yang et~al., 2023]{yang2024two}
Yang, J., Li, X., Fatkhullin, I., and He, N. (2023).
\newblock Two sides of one coin: the limits of untuned sgd and the power of adaptive methods.
\newblock {\em Advances in Neural Information Processing Systems}, 36.

\bibitem[Zamani and Glineur, 2023]{zamani2023exact}
Zamani, M. and Glineur, F. (2023).
\newblock Exact convergence rate of the last iterate in subgradient methods.
\newblock arXiv:2307.11134.

\bibitem[Zhang et~al., 2021]{zhang2021understanding}
Zhang, C., Bengio, S., Hardt, M., Recht, B., and Vinyals, O. (2021).
\newblock Understanding deep learning (still) requires rethinking generalization.
\newblock {\em Communications of the ACM}, 64(3):107--115.

\end{thebibliography}

\end{document}